\author{Ryo Ohkawa}
\address{Department of Mathematics, Tokyo Institute of
Technology, 2-12-1 Ookayama, Meguro-ku, Tokyo 152-8551, JAPAN}
\email{ookawa@math.titech.ac.jp}
\title{Flips of moduli of stable 
torsion free sheaves with $c_1=1$ on $\mathbb{P}^2$}
\newtheorem{theo}{Theorem}[section]
\newtheorem{coro}[theo]{Corollary}
\newtheorem{prop}[theo]{Proposition}
\newtheorem{defi}[theo]{Definition}
\newtheorem{lemm}[theo]{Lemma}
\def\Stab{\mathop{\mathrm{Stab}}\nolimits}
\def\Coh{\mathop{\mathrm{Coh}}\nolimits}
\def\End{\mathop{\mathrm{End}}\nolimits}
\def\D{D^b}
\def\U{\mathcal{U}}
\def\T{\mathcal{T}}
\def\C{\mathbb{C}}
\def\R{\mathbb{R}}
\def\e{\varepsilon}
\newcommand{\M}{\mathcal{M}}
\def\min{\mathop{\mathrm{min}}\nolimits}
\def\max{\mathop{\mathrm{max}}\nolimits}
\def\fr{\mathop{\mathrm{fr}}\nolimits}
\def\Hom{\mathop{\mathrm{Hom}}\nolimits}
\def\Ext{\mathop{\mathrm{Ext}}\nolimits}
\newcommand{\mo}{\mathcal{O}}
\newcommand{\E}{\mathcal{E}}
\newcommand{\F}{\mathcal{F}}
\newcommand{\PP}{\mathbb{P}}
\newcommand{\Q}{\mathbb{Q}}
\newcommand{\A}{\mathcal{A}}
\newcommand{\Z}{\mathbb{Z}}
\newcommand{\Pic}{\operatorname{Pic}}
\newcommand{\GL}{\operatorname{GL}}
\newcommand{\grp}{{\widetilde{\GL}^+}(2,\R)}
\def\Im{\mathop{\mathrm{Im}}\nolimits}
\def\Re{\mathop{\mathrm{Re}}\nolimits}
\def\im{\mathop{\mathrm{im}}\nolimits}
\def\ker{\mathop{\mathrm{ker}}\nolimits}
\def\coker{\mathop{\mathrm{coker}}\nolimits}
\def\dim{\mathop{\mathrm{dim}}\nolimits}
\def\dimv{\mathop{\mathrm{\underline{dim}}}\nolimits}
\def\rk{\mathop{\mathrm{rk}}\nolimits}
\def\ch{\mathop{\mathrm{ch}}\nolimits}
\def\Mod{\mathop{\mathrm{mod\text{-}}}\nolimits}
\def\exp{\mathop{\mathrm{exp}}\nolimits}
\newcommand{\alphap}{\alpha^{\perp}}
\newcommand{\MP}{M_{\PP^2}}
\newcommand{\MPrn}{M_{\PP^2}(r,1,n)}
\newcommand{\Mv}{\overline{M}_{\PP^2}}
\newcommand{\Mvrn}{\overline{M}_{\PP^2}(r,1,n)}
\newcommand{\mop}{\mo_{\PP^2}}
\newcommand{\Homp}{\Hom_{\PP^2}}
\newcommand{\Homr}{\Hom_{B}}
\newcommand{\Extp}{\Ext_{\PP^2}}
\begin{document}
\maketitle

\begin{abstract}
We study flips of moduli schemes of stable torsion free sheaves $E$ with $c_1(E)=1$ on $\mathbb{P}^2$ as wall-crossing phenomena of moduli schemes of stable modules over certain finite dimensional algebra.
They are described as stratified Grassmann bundles.
\end{abstract}

\begin{center}
\emph{Dedicated to Takao Fujita on the occasion of
his 60th birthday}
\end{center}

\section{Introduction}
\label{sec:1}
\subsection{Background}
We denote by $M_{\PP^2}(r,c_1,n)$ the moduli of semistable torsion free sheaves $E$ on $\PP^2$ with the Chern class $c(E)=(r,c_1,n)\in H^{\ast}(\PP^2,\Z)$.
In this paper we treat the case where $c_1=1$.
In this case semistability and stability for $E$ coincide.
When $n\ge r\ge2$, or $n\ge 2$ and $r=1$, the Picard number of $\MPrn$ is equal to $2$ and we have two birational morphisms from $\MPrn$, which is described below.
 
One is defined by J. Li \cite{L} for general cases. 
We denote by $\MPrn_0$ the open subset of $\MPrn$ consisting of stable vector bundles. 
The Uhlenbeck compactification $\Mvrn$ of $\MPrn_0$ is described set theoretically by $$\Mv(r,1,n)=\sqcup_{i\ge 0}M_{\PP^2}(r,1,n-i)_0\times S^i(\PP^2).$$ 
The map $\pi\colon\MPrn\to\Mvrn\colon E\mapsto \pi(E)$ is defined by 
$$\pi(E):=(E^{\vee\vee},\text{Supp}(E^{\vee\vee}/E))\in M_{\PP^2}(r,1,n-i)_0\times S^i(\PP^2),$$
where $E^{\vee\vee}$ is the double dual of $E$ and $i$ is the length of $E^{\vee\vee}/E$. 
In the case where $r=1$, this morphism is called the Hilbert-Chow morphism $\pi\colon (\PP^2)^{[n]}\to S^n(\PP^2)$ and it is a divisorial contraction when $n\ge 2$.   
In the case where $r\ge 2$, this map is birational since it is an isomorphism on $\MPrn_0$ to its image. 
It is shown that the codimension of the complement of $\MPrn_0$ is equal to 1 when $\MP(r,1,n-1)\neq\emptyset$ (cf. \cite[Proposition~3.23]{M1}). 
Hence this map is a divisorial contraction.
 
The other one is defined by Yoshioka. 
In his paper \cite{Y2} on moduli of torsion free sheaves on rational surfaces, he studied the following morphism $$\psi\colon \MPrn\to\MP(n+1,1,n).$$
For any $E\in\MP(r,1,n)$, $\psi(E)$ is defined by the exact sequence 
\begin{equation}\label{ex1}
0\to \Ext_{\PP^2}^1(E,\mop)^{\vee}
\otimes\mop\to\psi(E)\to E\to 0,
\end{equation}
which is called the universal extension, where $\Ext_{\PP^2}^1(E,\mop)^{\vee}$ is the dual vector space of $\Ext_{\PP^2}^1(E,\mop)$.
Here we have $\Homp(E,\mop)=\Extp^2(E,\mop)=0$ and $(n+1,1,n)\in H^{\ast}(\PP^2,\Z)$ is the Chern class of $$[E]-\chi(E,\mop)[\mop]=[E]+\dim\Extp^1(E,\mop)[\mop]\in K(\PP^2),$$ where $\chi(E,\mop)=\sum_i(-1)^i\dim_\C\Ext^i_{\PP^2}(E,\mop)$.

Furthermore the moduli space $\MPrn$ has 
a stratification 
$$\MPrn=\sqcup_{i=0}^r M^i_{\PP^2}(r,1,n),$$
where 
$M^i_{\PP^2}(r,1,n):=\{E\in\MPrn\mid\dim_{\C}
\Hom_{\PP^2}(\mo_{\PP^2},E)=i\}$
and it is called the \emph{Brill-Noether locus}.
The following theorem is shown in \cite{Y2}. 
\begin{theo}\emph{\bf cf. {\cite[Theorem~5.8]{Y2}}}
\label{yos} 
The following hold.
\begin{itemize}
\item[(1)] There exists an
isomorphism $$M_{\PP^2}^i(r,1,n)
\cong\psi^{-1}\left(M^{n-r+i+1}_{\PP^2}(n+1,1,n)\right).$$
\item[(2)] The restriction of $\psi$ to each strata 
$M_{\PP^2}^i(r,1,n)$ is a $Gr(n-r+i+1,i)$-bundle
over the strata $M^{n-r+i+1}_{\PP^2}(n+1,1,n)$. 
\end{itemize}
\end{theo}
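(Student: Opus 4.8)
The plan is to analyse $\psi$ fibre by fibre through the long exact sequence of $\Homp(\mop,-)$ applied to the universal extension (\ref{ex1}), to identify the fibre of $\psi$ over a sheaf $F$ with a Grassmannian of subspaces of $\Homp(\mop,F)$, and then to promote this to a bundle statement using that the relevant cohomology has locally constant rank along the strata. For (1): apply $\Homp(\mop,-)$ to (\ref{ex1}); using $H^0(\PP^2,\mop)=\C$, $H^1(\PP^2,\mop)=0$ and $\dim_{\C}\Extp^1(E,\mop)=n+1-r$ (the Chern-class identity recalled before the theorem, which also uses the vanishings $\Homp(E,\mop)=\Extp^2(E,\mop)=0$, both immediate from stability since $c_1=1$), one gets
\[
0\to\Extp^1(E,\mop)^{\vee}\to\Homp(\mop,\psi(E))\to\Homp(\mop,E)\to 0 ,
\]
so $\dim_{\C}\Homp(\mop,\psi(E))=\dim_{\C}\Homp(\mop,E)+(n+1-r)$. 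Hence $E\in M^i_{\PP^2}(r,1,n)$ precisely when $\psi(E)\in M^{n-r+i+1}_{\PP^2}(n+1,1,n)$, which is the set-theoretic equality in (1); it becomes an isomorphism of schemes once (2) exhibits the right-hand side as a Grassmann bundle over the reduced stratum.

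For (2), first argue fibrewise. Fix $F\in M^{n-r+i+1}_{\PP^2}(n+1,1,n)$ and set $N:=n+1-r$, so $\dim_{\C}\Homp(\mop,F)=N+i$. To an $i$-dimensional quotient $\Homp(\mop,F)\twoheadrightarrow Q$ --- i.e.\ a point of $Gr(n-r+i+1,i)$ --- with $N$-dimensional kernel $V$, associate $E_V:=\coker\bigl(V\otimes\mop\xrightarrow{\mathrm{ev}}F\bigr)$. One checks in turn: \textbf{(i)} $\mathrm{ev}$ is injective, for otherwise $\mathrm{im}(\mathrm{ev})$ would be a globally generated torsion-free subsheaf of $F$, forced by stability of $F$ to have first Chern class $0$, hence trivial, so that $\ker(\mathrm{ev})$ would itself be a nonzero trivial bundle, contradicting $H^0(\PP^2,\ker(\mathrm{ev}))=0$ (which holds because $H^0(\mathrm{ev})$ is the inclusion $V\hookrightarrow\Homp(\mop,F)$); \textbf{(ii)} $E_V$ is torsion free, a $0$-dimensional torsion subsheaf giving a finite-length extension of the locally free sheaf $V\otimes\mop$ inside $F$, which cannot happen, and a $1$-dimensional one giving a rank-$r$ torsion-free quotient of $F$ of slope $\le 0<\mu(F)$, contradicting stability; \textbf{(iii)} the Chern class of $E_V$ is $(r,1,n)$ and $E_V$ is stable, since any destabilizing torsion-free quotient of $E_V$ would be a rank-$s$ torsion-free quotient of $F$ with $s<r$, forcing $c_1\ge1$, whence slope $\ge 1/s>1/r=\mu(E_V)$; \textbf{(iv)} the sequence $0\to V\otimes\mop\to F\to E_V\to 0$ is the universal extension of $E_V$: applying $\Homp(-,\mop)$ and using $\Homp(F,\mop)=0$ together with $\dim V=N=\dim\Extp^1(E_V,\mop)$ (the last again by the Chern-class identity), the connecting map $V^{\vee}\to\Extp^1(E_V,\mop)$ is an isomorphism, which characterizes the universal extension, so $\psi(E_V)=F$. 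Conversely, for $E\in\psi^{-1}(F)$ the subspace $V_E:=\Extp^1(E,\mop)^{\vee}\hookrightarrow\Homp(\mop,F)$ from the sequence in (1) satisfies $E_{V_E}\cong E$ and $V_{E_V}=V$; thus $V\mapsto E_V$ and $E\mapsto V_E$ are mutually inverse bijections $Gr(n-r+i+1,i)\leftrightarrow\psi^{-1}(F)$, with $Q\cong\Homp(\mop,E_V)$.

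It remains to make this algebraic. Since $\dim_{\C}\Homp(\mop,F)$ is constant along $M^{n-r+i+1}_{\PP^2}(n+1,1,n)$ and $\dim_{\C}\Extp^1(E,\mop)$ is constant along $M^i_{\PP^2}(r,1,n)$, cohomology and base change show that the relative $\Homp(\mop,-)$ and relative $\Extp^1(-,\mop)$ of a (twisted, or \'etale-local) universal sheaf are locally free on these strata. Taking $\mathcal H$ to be the former on $M^{n-r+i+1}_{\PP^2}(n+1,1,n)$, the Grassmann bundle of $i$-dimensional quotients of $\mathcal H$ carries a universal evaluation map whose cokernel, by (i)--(iii) and flatness, is a flat family of sheaves in $M^i_{\PP^2}(r,1,n)$; the resulting classifying morphism to $M^i_{\PP^2}(r,1,n)$ lies over $M^{n-r+i+1}_{\PP^2}(n+1,1,n)$ and is compatible with $\psi$ by (iv), while the relative $\Extp^1$-construction on $M^i_{\PP^2}(r,1,n)$ provides its inverse $E\mapsto V_E$. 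Hence this morphism is an isomorphism and $\psi|_{M^i_{\PP^2}(r,1,n)}$ is the projection of a $Gr(n-r+i+1,i)$-bundle onto $M^{n-r+i+1}_{\PP^2}(n+1,1,n)$; statement (1) follows, the two induced reduced structures agreeing. The main obstacle is the fibrewise assertion that (i)--(iv) hold for \emph{every} $N$-dimensional $V$ rather than only for a generic one --- this is precisely what makes the restriction of $\psi$ to a single stratum an honest Grassmann bundle rather than the merely stratified Grassmann bundle one sees over the whole moduli space --- and it is here that the hypothesis $c_1=1$ is essential, via the slope inequalities and the Riemann--Roch count; the rest is routine but needs care with flatness and with checking that the fibrewise constructions descend to morphisms of schemes.
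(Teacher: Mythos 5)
Your argument is correct, and I see no gap in it: the long exact sequence applied to (\ref{ex1}) gives the set-theoretic identification in (1), your fibrewise steps (i)--(iv) (injectivity of the evaluation map via the triviality of globally generated torsion-free sheaves with $c_1=0$, exclusion of torsion in the cokernel by the reflexive-hull and slope arguments, stability of $E_V$ from $\gcd(r,1)=1$, and the connecting-map criterion identifying the universal extension) are all sound, and the globalization by cohomology and base change on the strata is the standard one. However, this is essentially Yoshioka's original proof on $\PP^2$, not the route the paper takes. The paper's own proof of Theorem~\ref{yos}, given in \S\ref{subsec:3-6}, works entirely on the quiver side: under $\MPrn\cong M_-(\alpha_r)$ the sheaf $\mop$ is replaced by the simple $B$-module $S_0=\Phi(\mop[1])$, the map $\psi$ becomes $g_-=q_1\circ q_2^{-1}$ for the two forgetful morphisms out of the coherent-systems space $M_-(\alpha_{n+1},n+1-r)$ of pairs $(E_-,V)$ with $V\subset\Hom_B(S_0,E_-)$ --- itself realized as a moduli space of stable modules over the enlarged algebra $\bar{B}$ --- and the Grassmann-bundle structure over each Brill--Noether stratum is extracted in Proposition~\ref{grbun} from the universal property of the kernel sheaves $\ker\delta^{\ast}$, $\ker{\gamma'}^{\ast}$ of the universal families; your case analysis with slopes and torsion is replaced there by the elementary Kronecker-module Lemma~\ref{lemkro} and Proposition~\ref{set}. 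Your approach buys a self-contained argument on $\PP^2$ with no derived equivalence; the paper's approach buys a cleaner handling of exactly the scheme-theoretic points you flag at the end (flatness, base change, descent of the fibrewise construction), since these are absorbed into representability of the coherent-systems moduli, and --- more importantly for the paper --- it applies verbatim to the other leg $f_+\colon M_+(\alpha_r)\to M_0(\alpha_r)$ of the flip, which has no direct description by sheaves on $\PP^2$.
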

By the above theorem if $n$ is large enough, $\psi$ is a birational 
morphism to the image $\im\psi$ and it is a flipping contraction.
By the theory of the birational geometry
\cite{BCHM} we have the diagram called flip
\begin{equation}\label{dai1}
\xymatrix{
\ar[dr]_{\psi_+}M_+(r,1,n)&&\ar@{-->}[ll]\MP(r,1,n).\ar[dl]
^{\psi}\\
&\im\psi&
}
\end{equation} 

The purpose of this note is to describe 
spaces $M_+(r,1,n)$, $\im\psi$ and the
morphism $\psi_+$ in the above diagram 
using terms of moduli spaces. 
We follow ideas in \cite{O}. We consider $M_{\PP^2}(r,1,n)$ 
as a moduli scheme of semistable modules over 
the finite dimensional algebra 
$\End_{\PP^2}\left(\mo_{\PP^2}(1)\oplus
\Omega_{\PP^2}(3)\oplus\mo_{\PP^2}(2)\right)$ 
and study the wall-crossing
phenomena as the stability changes
using the result of \cite{O} as follows.

\subsection{Main results}
We introduce the exceptional collection 
$$\mathfrak{E}:=\left(\mo_{\PP^2}(1), 
\Omega_{\PP^2}^1(3), \mo_{\PP^2}(2)\right)$$
on $\PP^2$ and put $\E:=\mo_{\PP^2}(1)\oplus\Omega_{\PP^2}^1(3)
\oplus\mo_{\PP^2}(2)$
and $B:=\End_{\PP^2}
(\E).$ We denote abelian categories of coherent sheaves on $\PP^2$ and finitely generated right $B$-modules by $\Coh(\PP^2)$ and $\Mod B$ respectively. 
Then by Bondal's Theorem \cite{Bo}, the functor $\Phi:=\mathbf R \Hom_{\PP^2}(\E,-)$ gives an equivalence 
$$\Phi\colon
\D(\PP^2)\cong \D(B),$$
where $\D(\PP^2)$ and $\D(B)$ are the bonded derived categories of $\Coh(\PP^2)$ and $\Mod B$ respectively.
The equivalence $\Phi$ also induces an isomorphism $\varphi\colon K(\PP^2)\cong K(B)$
between the Grothendieck groups 
of $\Coh(\PP^2)$ and $\Mod B$.

For $\alpha\in K(B)$, we put $$\alpha^{\perp}:=\{\theta\in\Hom_\Z(K(B),\R)\mid\theta(\alpha)=0\}.$$
Any $\theta\in\alphap$ defines a stability condition of $B$-modules $E$ with $[E]=\alpha$.
We denote by $M_B(\alpha,\theta)$ the moduli space of $\theta$-semistable $B$-modules $E$ with $[E]=\alpha$.
In particular we take $$\alpha_{r}=\alpha_{r,n}:=\varphi(n\mo_{\PP^2}(-1)[2]+(2n-r+1)\mo_{\PP^2}[1]+(n-1)\mo_{\PP^2})\in K(B).$$
Here we omit subscription "$n$" although $\alpha_r$ depends on $n$, since we almost always fix $n$ in this paper. 
There exists a wall-and-chamber structure on $\alpha_r^{\perp}$. 

When $n$ is large enough we find two chambers $C_-, C_+$ and a wall $W_0\subset\alpha_{r}^{\perp}$ between them such that the following propositions hold (cf. \S~\ref{sec:3}).
We put $$M_-(\alpha_r):=M_B(\alpha_r,\theta_-),
\ \ M_+(\alpha_r):=M_B(\alpha_r,\theta_+),
\ \ M_0(\alpha_r):=M_B(\alpha_r,\theta_0)$$
for any $\theta_-\in C_-$, $\theta_+\in C_+$
and $\theta_0\in C_0$. 
\begin{prop}\emph{\bf{\cite[Main~Theorem~1.3~(iii)]{O}}}
\label{inpro1}
We have an isomorphism $$M_{\PP^2}(r,1,n)\cong
M_-(\alpha_r)\colon E\mapsto \Phi(E[1]).$$
\end{prop}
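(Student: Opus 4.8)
The plan is to translate the problem entirely to the side of $B$-modules via the equivalence $\Phi\colon \D(\PP^2)\cong\D(B)$, and to identify the functor $E\mapsto\Phi(E[1])$ precisely on objects and on the relevant $\Hom$-spaces. First I would compute $\varphi([E])$ for $E\in\MPrn$: since $c(E)=(r,1,n)$, a Riemann–Roch / Beilinson-type calculation gives the class of $\Phi(E)$ in $K(B)$ in terms of the exceptional collection $\mathfrak{E}$, and after shifting by $[1]$ this should match $\alpha_r=\varphi(n\mo_{\PP^2}(-1)[2]+(2n-r+1)\mo_{\PP^2}[1]+(n-1)\mo_{\PP^2})$. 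Concretely one checks that the Beilinson-type resolution of $E$ with respect to the collection dual to $\mathfrak{E}$ has the shape dictated by these multiplicities; this is the routine bookkeeping step and I would not grind through it.

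The substantive step is to show that $\Phi(E[1])$ is an honest $B$-module (a complex concentrated in degree $0$) and that it is $\theta_-$-semistable, and conversely that every $\theta_-$-semistable $B$-module of class $\alpha_r$ arises this way from a unique stable torsion free sheaf. For the first part I would use that $\mathfrak{E}$ is a strong exceptional collection generating $\D(\PP^2)$, so that $\mathbf R\Hom_{\PP^2}(\E,-)$ sends a sheaf to a complex whose cohomology is controlled by the vanishing $\Hom_{\PP^2}(\E,E[k])=\Ext^k_{\PP^2}(\E,E)$; one must verify $\Ext^1_{\PP^2}(\E,E)=\Ext^2_{\PP^2}(\E,E)=0$ for stable $E$ with the given Chern class and $n$ large, using stability of $E$ together with the stability/slopes of the summands $\mo_{\PP^2}(1)$, $\Omega^1_{\PP^2}(3)$, $\mo_{\PP^2}(2)$. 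Granting this, $\Phi(E)[1]=\Phi(E[1])$ lives in a single degree after the shift, i.e. is a genuine $B$-module.

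The matching of stability conditions is where the cited \cite[Main Theorem 1.3]{O} does the real work, so my proof would be a reduction to it rather than a reproof: the point is that the chamber $C_-$ is defined (in \S\ref{sec:3}, following \cite{O}) precisely so that $\theta_-$-semistability of $\Phi(E[1])$ corresponds, under $\Phi$, to Gieseker/slope semistability of $E$ on $\PP^2$ — this is the standard dictionary between King-style stability for modules over the endomorphism algebra of a tilting bundle and Gieseker stability, once the walls are placed correctly. So the argument is: (i) $\Phi$ is an equivalence of derived categories inducing $\varphi$ on $K$-groups (Bondal); (ii) the class computation puts $\Phi(E[1])$ in class $\alpha_r$; (iii) the $\Ext$-vanishing puts it in $\Mod B$; (iv) the choice of $C_-$ identifies the two notions of semistability, giving a bijection on closed points; (v) this bijection is induced by a morphism of the moduli functors (the universal sheaf on $\MPrn\times\PP^2$ is sent by a relative $\mathbf R\Hom_{\PP^2}(\E,-)$ to a universal $B$-module), hence is an isomorphism of schemes by the universal property of $M_-(\alpha_r)$.

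The main obstacle I anticipate is step (iii) together with the precise placement of the wall $W_0$ and the chamber $C_-$: one has to be careful that for \emph{every} stable $E$ (not just the generic one) the off-diagonal $\Ext$'s vanish and that no object of class $\alpha_r$ that is $\theta_-$-semistable but has a nonzero negative-degree cohomology sheaf under $\Phi^{-1}$ sneaks in. This is exactly the content that \cite{O} is set up to handle via its wall-and-chamber analysis, so in the write-up I would isolate the needed input as a citation and spend the effort only on the $K$-theory computation identifying $\alpha_r$ and on checking that the induced map of moduli functors is well defined.
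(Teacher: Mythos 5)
Your overall strategy is the same as the paper's: the paper gives no independent proof of this proposition, but deduces it from \cite[Main~Theorem~5.1]{O} (the isomorphism $M_{\PP^2}(r,1,n)\cong M_B(\alpha_r,\theta)$ for any $\theta$ in the chamber $C_{\PP^2}$) together with \cite[Lemma~6.2]{O} (the inclusion $\R_{>0}\theta_0+\R_{>0}\theta_{\PP^2}\subset C_{\PP^2}$, which is what places $\theta_-=\theta_0-\e(2n-1+r,-n,0)$ inside $C_{\PP^2}$ and hence gives $C_-=C_{\PP^2}$). Your steps (i), (ii), (iv), (v) are exactly the content being delegated to \cite{O}, and your identification of the chamber placement as the crux is accurate.

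There is, however, a concrete error in your step (iii). You propose to verify $\Ext^1_{\PP^2}(\E,E)=\Ext^2_{\PP^2}(\E,E)=0$; if that held, $\Phi(E)=\mathbf R\Hom_{\PP^2}(\E,E)$ would be concentrated in degree $0$ and $\Phi(E[1])$ in degree $-1$, which is not a module --- and in fact $\Ext^1_{\PP^2}(\E,E)$ cannot vanish, since its three graded pieces have dimensions $n$, $2n+r-1$, $n-1$, the entries of the dimension vector $\alpha_r$. The vanishing you need is $\Hom_{\PP^2}(\E,E)=\Ext^2_{\PP^2}(\E,E)=0$, so that $\mathbf R\Hom_{\PP^2}(\E,E)$ sits in degree $1$ and $\Phi(E[1])$ in degree $0$; both follow from slope stability of $E$ and of the summands of $\E$ for \emph{every} $E\in M_{\PP^2}(r,1,n)$, with no largeness assumption on $n$ (equivalently, $E[1]$ lies in the heart $\mathcal{A}=\langle\mo_{\PP^2}(-1)[2],\mo_{\PP^2}[1],\mo_{\PP^2}(1)\rangle$). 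With that correction your outline matches the intended argument.
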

We automatically get the following diagram
\begin{equation}\label{dai2}
\xymatrix{M_+(\alpha_r)\ar[dr]^{f_+}
&&\ar[dl]_{f_-}M_-(\alpha_r).\\
&M_0(\alpha_r)&
}
\end{equation}
By analyzing this diagram we see that
diagrams (\ref{dai1}) and (\ref{dai2}) coincide up to
isomorphism. In particular we get the 
following proposition.
\begin{prop}\label{inpro2}
We have isomorphisms
\begin{itemize}
\item[(1)] $M_0(\alpha_r)\cong\im\psi$ and\\
\item[(2)] $M_+(\alpha_r)\cong
M_+(r,1,n)$. 
\end{itemize}
\end{prop}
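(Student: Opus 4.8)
The plan is to identify the two diagrams $(\ref{dai1})$ and $(\ref{dai2})$ by exploiting the equivalence $\Phi$ together with Proposition~\ref{inpro1} and Yoshioka's Theorem~\ref{yos}. First I would make explicit the identification $M_{\PP^2}(r,1,n)\cong M_-(\alpha_r)$ of Proposition~\ref{inpro1}, which sends $E$ to $\Phi(E[1])$, and trace through how the universal extension $(\ref{ex1})$ defining $\psi(E)$ transforms under $\Phi$. Applying $\mathbf R\Hom_{\PP^2}(\E,-)$ to $(\ref{ex1})$ gives a triangle in $\D(B)$ relating $\Phi(E[1])$, $\Phi(\psi(E)[1])$, and $\Phi(\mop[1])\otimes\Ext^1_{\PP^2}(E,\mop)^\vee$; since $(n+1,1,n)$ is exactly the Chern class obtained from $E$ by adding copies of $\mop$, the class $[\Phi(\psi(E)[1])]\in K(B)$ is independent of $E$ — I would check it equals the class governing the target $M_B(n+1,1,n$-datum$)$. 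The upshot should be that $\psi$ is, after transport of structure along $\Phi$, precisely the contraction $f_-\colon M_-(\alpha_r)\to M_0(\alpha_r)$, so that $\im\psi\cong M_0(\alpha_r)$, giving part~(1).

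For part~(2), once $(\ref{dai1})$ and $(\ref{dai2})$ are known to have the same contraction side (same source $M_-(\alpha_r)\cong M_{\PP^2}(r,1,n)$ and same base $M_0(\alpha_r)\cong\im\psi$), I would invoke the uniqueness of the flip: by \cite{BCHM} the flip of a flipping contraction is unique, so any two spaces completing the diagram with a small contraction to the common base must be isomorphic over that base. Thus it suffices to verify that $f_+\colon M_+(\alpha_r)\to M_0(\alpha_r)$ is itself a small (flipping) contraction and that $M_+(\alpha_r)$ is $\Q$-factorial with the appropriate Picard number — these are the hypotheses under which the flip is characterized uniquely. The GIT/wall-crossing picture of \cite{O} should supply exactly this: crossing the wall $W_0$ between chambers $C_-$ and $C_+$ replaces one small contraction by another with the same affinization $M_0(\alpha_r)$, and one reads off $\Q$-factoriality and Picard number $2$ from the chamber structure on $\alpha_r^\perp$.

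The steps in order: (i) compute $[\Phi(\psi(E)[1])]\in K(B)$ and match it with the moduli problem on the $M_+$ side, using $(\ref{ex1})$ and the Chern-class bookkeeping already in the excerpt; (ii) show that under $\Phi$ the morphism $\psi$ is identified with $f_-$, hence $\im\psi\cong M_0(\alpha_r)$; (iii) check via the stratifications — Yoshioka's Brill–Noether strata $M^i_{\PP^2}(r,1,n)$ versus the stratification of $M_-(\alpha_r)$ coming from \cite{O} — that the Grassmann-bundle structure in Theorem~\ref{yos}(2) matches the fibre structure of $f_-$; (iv) conclude $f_-$ is a flipping contraction when $n$ is large, so $(\ref{dai1})$ is the flip of $f_-$; (v) verify $f_+$ is a small contraction to $M_0(\alpha_r)$ with $M_+(\alpha_r)$ $\Q$-factorial of Picard number $2$; (vi) apply uniqueness of flips to get $M_+(\alpha_r)\cong M_+(r,1,n)$ over $\im\psi$, yielding part~(2).

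The main obstacle I expect is step~(ii)–(iii): verifying carefully that the \emph{scheme-theoretic} morphism $\psi$ — not merely its effect on closed points — agrees with $f_-$ under $\Phi$. This requires comparing the universal families, i.e. checking that applying $\Phi$ to the universal extension over $M_{\PP^2}(r,1,n)\times\PP^2$ produces the universal $\theta_-$-semistable $B$-module family pushed to $M_0(\alpha_r)$, and that the identifications are compatible with the GIT quotient presentations on both sides. The numerical compatibility (Chern classes, dimensions of the Grassmannians $Gr(n-r+i+1,i)$ matching the fibres predicted by the wall-crossing in \cite{O}) is the bookkeeping that makes this work, but the functoriality statement over the base is where the real content lies. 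Once that is in place, the uniqueness-of-flips argument for part~(2) is formal.
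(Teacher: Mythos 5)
Your overall strategy --- identify the two diagrams via $\Phi$ and then invoke uniqueness of flips --- is the right one, but both halves of your plan have a genuine gap. For part (1): $\psi$ and $f_-$ do \emph{not} have the same target, so ``$\psi$ is precisely $f_-$ after transport along $\Phi$'' does not typecheck and cannot by itself yield $\im\psi\cong M_0(\alpha_r)$. The map $\psi$ lands in $\MP(n+1,1,n)\cong M_-(\alpha_{n+1})$, a moduli space for the \emph{different} class $\alpha_{n+1}$, while $f_-$ lands in $M_0(\alpha_r)$, a moduli of S-equivalence classes for $\alpha_r$. What the paper actually proves is a factorization $\psi=\pi_0^r\circ f_-$ (up to the identifications $M_T(\alpha_T)\cong M_-(\alpha_{n+1})\cong\MP(n+1,1,n)$ obtained by passing to $3$-Kronecker modules $E\mapsto E_T$), and the essential extra ingredient is Lemma~\ref{clo}: the induced map $\pi_0^r\colon M_0(\alpha_r)\to M_T(\alpha_T)$ is a \emph{closed embedding} (it is affine because it comes from a map of invariant rings, hence finite between projective schemes, and it is injective on points). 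Without some such argument you only obtain a finite bijective morphism onto $\im\psi$, not a scheme-theoretic isomorphism; your proposal never constructs the map out of $M_0(\alpha_r)$ at all.

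For part (2): the uniqueness criterion you state is false as written. ``Any two spaces completing the diagram with a small contraction to the common base are isomorphic'' would equally prove $M_-(\alpha_r)\cong M_+(r,1,n)$, since $M_-(\alpha_r)$ is itself a smooth (hence $\Q$-factorial) small modification over $M_0(\alpha_r)$ with the same Picard number. What singles out the flip is the sign of the canonical class on the contracted curves, and this is exactly the input your plan omits: the paper uses Proposition~\ref{pic}, namely that $\rho\colon\alpha_r^{\perp}\to\Pic(M_-(\alpha_r))$ is an isomorphism with $K=\rho(-3\theta_{\PP^2})$, together with the chamber description $C_-=\R_{>0}\theta_0+\R_{>0}\theta_{\PP^2}$ of Proposition~\ref{cha}. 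Since $\theta_{\PP^2}$ sits on the far side of $C_-$ from the wall $W_0=\R_{\ge0}\theta_0$, the class $K$ is negative on the curves contracted by $f_-$ and positive on those contracted by $f_+$, which is what makes (\ref{thad}) the flip. Either supply this canonical-bundle computation, or replace your uniqueness argument by the correct one (a flipping contraction of relative Picard number one admits exactly two small $\Q$-factorial modifications, and $M_+(\alpha_r)\not\cong M_-(\alpha_r)$ over $M_0(\alpha_r)$ because the Grassmannian fibres differ); as it stands, step (v)--(vi) does not close.
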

Proofs of Proposition~\ref{inpro2} are given
in \S~\ref{subsec:3-1} for (1) and in \S~\ref{subsec:3-4}
for (2).
Using the $B$-module $S_0:=\Phi(\mo_{\PP^2}[1])$ we define 
the Brill-Noether locus similar to one in Yoshioka's
theory,
$$M_-^i(\alpha_r)=\{E\in M_-(\alpha_r)\mid
\dim_{\C}\Hom_{B}(S_0,E)=i\},$$
$$M_+^i(\alpha_r)=\{E\in M_+(\alpha_r)\mid
\dim_{\C}\Hom_{B}(E,S_0)=i\}.$$ 
Our situation is similar to \cite{NY} and we have our main theorem.

\begin{theo}\label{inmain}
Assume $n\ge r+2$. Then for each $i$ the following hold. 
\begin{itemize}
\item[(1)] The images $f_+(M^i_+(\alpha_r))$
and $f_-(M^i_-(\alpha_r))$ coincide 
in $M_0(\alpha_r)$.\\

We put $M^i_0(\alpha_r):=f_+(M^i_+(\alpha_r))
=f_-(M^i_-(\alpha_r))$.\\

\item[(2)] We have isomorphisms
$M_0^i(\alpha_r)\cong M_-^0(\alpha_{r-i})
\cong M_+^0(\alpha_{r-i})$.
\item[(3)] We have isomorphisms
$M_+^i(\alpha_r)\cong f^{-1}_+(M_0^i(\alpha_r))$.\\
\item[(4)] The restrcition of $f_+$ to each stratum
$M_+^i(\alpha_r)\to M_0^i(\alpha_r)$
is a $Gr(n-r+i-2,i)$-bundle over $M_0^i(\alpha_r)$. 
\end{itemize}
\end{theo}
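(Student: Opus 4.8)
The plan is to reduce everything to the wall-crossing analysis of the diagram (\ref{dai2}) for the finite-dimensional algebra $B$, exactly in the spirit of \cite{O} and \cite{NY}, and then transport the statements back to the sheaf-theoretic side via Propositions~\ref{inpro1} and~\ref{inpro2}. The starting point is to understand the fibers of $f_-$ and $f_+$ over a point of $M_0(\alpha_r)$ in terms of the $B$-module $S_0=\Phi(\mo_{\PP^2}[1])$, which is the simple module "crossing the wall" $W_0$. First I would describe a $\theta_0$-semistable module $F$ with $[F]=\alpha_r$ by its Jordan--H\"older filtration with respect to $\theta_0$: its factors are a $\theta_0$-stable module $F'$ together with $i$ copies of $S_0$, where $i$ records how many times $S_0$ appears. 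Then $[F']=\alpha_{r-i}$ because each copy of $S_0$ shifts the class by $[S_0]=\varphi(\mo_{\PP^2}[1])$, and one checks on Chern characters (using the definition of $\alpha_{r,n}$) that subtracting $i$ copies of $[S_0]$ from $\alpha_{r,n}$ gives $\alpha_{r-i,n}$. This identifies the stratification of $M_0(\alpha_r)$ by the multiplicity of $S_0$ with the spaces $M_-^0(\alpha_{r-i})\cong M_+^0(\alpha_{r-i})$ (the last isomorphism holds because $\theta_0$-stable modules are both $\theta_-$- and $\theta_+$-stable), giving part (2); part (1) follows because $f_-(M_-^i(\alpha_r))$ and $f_+(M_+^i(\alpha_r))$ are both exactly the locus in $M_0(\alpha_r)$ where $S_0$ occurs with multiplicity $i$, since $\dim\Hom_B(S_0,E)$ (resp. $\dim\Hom_B(E,S_0)$) jumps by $1$ each time an extension with $S_0$ is added on the appropriate side.

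For parts (3) and (4) I would analyze the fibers of $f_+$. Over a point $[F']\in M_0^i(\alpha_r)$ corresponding to a $\theta_0$-stable module $F'$ with $[F']=\alpha_{r-i}$, the fiber $f_+^{-1}([F'])$ parametrizes $\theta_+$-stable modules $E$ with $[E]=\alpha_r$ that are $S$-equivalent to $F'\oplus S_0^{\oplus i}$ with respect to $\theta_0$. Such $E$ sit in a short exact sequence built from $F'$ and $i$ copies of $S_0$; since $F'$ is the $\theta_+$-stable piece and $S_0$ destabilizes on the $+$ side, $E$ is an extension $0\to S_0^{\oplus i}\to E\to F'\to 0$ (or its dual, depending on the sign conventions for $C_+$), and the stability of $E$ translates into a genericity condition on the corresponding element of $\Ext_B^1(F',S_0)^{\oplus i}$, namely that the induced map $\C^i\to\Ext_B^1(F',S_0)$ be injective. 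Here one computes $\dim\Ext_B^1(F',S_0)$ from the Euler form on $K(B)$: since $\Hom_B(F',S_0)=\Ext_B^2(F',S_0)=0$ for a $\theta_0$-stable $F'$ of class $\alpha_{r-i}$ (vanishing by stability and by the global dimension of $B$), we get $\dim\Ext_B^1(F',S_0)=-\chi_B([\alpha_{r-i}],[S_0])$, and a direct evaluation of this Euler pairing gives $n-r+i-2$. The fiber is then the variety of $i$-dimensional subspaces of a fixed $(n-r+i-2)$-dimensional space, i.e. $Gr(n-r+i-2,i)$, and this identification is functorial in $F'$, so $f_+$ restricted to $M_+^i(\alpha_r)\to M_0^i(\alpha_r)$ is a Grassmann bundle; part (3) is the statement that $M_+^i(\alpha_r)$ is precisely $f_+^{-1}(M_0^i(\alpha_r))$, which is immediate once we know the multiplicity of $S_0$ in the $\theta_0$-polystable representative equals $i$ exactly when $\dim\Hom_B(E,S_0)=i$.

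The main obstacle I expect is the careful bookkeeping at the level of extensions: proving that every $\theta_+$-stable $E$ over $[F']$ really is a single universal extension of $F'$ by $S_0^{\oplus i}$ (rather than some more complicated iterated extension), and that the stability condition is exactly the injectivity condition above, with no further constraints. This requires showing $\Hom_B(S_0,F')=0$ and controlling $\Ext_B^1(S_0,S_0)$ (ideally $S_0$ is an exceptional object, so this vanishes) so that the extension is rigid in the $S_0$-direction; and it requires a short-exact-sequence argument that any submodule or quotient of $E$ of the wrong slope would force the $\Ext$-class to be non-generic. This is precisely the type of local wall-crossing computation carried out in \cite{O} and \cite{NY}, so I would cite the relevant structural lemmas there and supply only the numerical verification that $\dim\Ext_B^1(F',S_0)=n-r+i-2$ via the Euler form, together with the compatibility of the local Grassmannian description with base change (to upgrade the fiberwise statement to a genuine bundle). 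Finally, transporting through $\Phi$ and Proposition~\ref{inpro2}(2), the same statements hold for $M_+(r,1,n)$, and comparing with Theorem~\ref{yos} confirms the shift $n-r+i+1\mapsto n-r+i-2$ coming from the different exceptional object used here versus $\mo_{\PP^2}$ in Yoshioka's setup.
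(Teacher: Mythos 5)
Your overall strategy---describing a point of $M_0(\alpha_r)$ by its $\theta_0$-Jordan--H\"older factors $F'\oplus S_0^{\oplus i}$, identifying the strata with $M_\pm^0(\alpha_{r-i})$, and realizing the fibers of $f_\pm$ as Grassmannians of subspaces of an $\Ext^1$-group computed via the Euler form---is essentially the paper's (Lemma~\ref{lemkro}, Proposition~\ref{set}, and \S\ref{subsec:3-5}--\ref{subsec:3-6}). But there is a concrete error on the $+$ side that would derail the computation. For $\theta_+$ the module $S_0$ must occur as a \emph{quotient}: the relevant extensions are $0\to F'\to E_+\to V\otimes S_0\to 0$ with $V\subset\Ext^1_B(S_0,F')$ an $i$-dimensional subspace and $\Hom_B(E_+,S_0)\cong V$. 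You instead write the extension with $S_0^{\oplus i}$ as a submodule and assert that the fiber is governed by $\Ext^1_B(F',S_0)$, claiming $\dim_{\C}\Ext^1_B(F',S_0)=n-r+i-2$. That dimension is wrong: the Euler-form computation you propose gives $\dim_{\C}\Ext^1_B(F',S_0)=n+1-r+i$, which is the group controlling the fibers of $f_-$ (Yoshioka's $Gr(n-r+i+1,i)$-bundle), whereas it is $\Ext^1_B(S_0,F')$ that has dimension $n-2-r+i$. The asymmetry between these two groups is exactly what makes the diagram~(\ref{thad}) a flip rather than an isomorphism, so the hedge ``or its dual, depending on the sign conventions'' cannot be left unresolved; carried out as literally written, your computation reproduces the $-$-side bundle and contradicts the claimed fiber $Gr(n-r+i-2,i)$.

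A secondary gap: parts (3) and (4) are assertions about schemes, and a fiberwise Grassmannian description plus an appeal to ``compatibility with base change'' does not by itself produce the bundle structure or the isomorphism $M_+^i(\alpha_r)\cong f_+^{-1}(M_0^i(\alpha_r))$. The paper handles this by introducing the coherent-system moduli $M_\pm(\alpha_r,i)$ as moduli of modules over the enlarged quiver algebra $\bar{B}$ in \S\ref{subsec:3-5}, exhibiting them as Grassmann bundles of kernel sheaves of the universal families and comparing universal properties (Proposition~\ref{grbun}); some such relative construction is needed to complete your argument.
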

Note that $M_+(\alpha_r)\neq\emptyset$ if and only if $n\ge r+2$.
Proofs of Main Theorem~\ref{inmain} are given in \S~\ref{subsec:3-3} for (1) and \S~\ref{subsec:3-6} for the others.
We also give a new proof of Theorem~\ref{yos} using terms of $B$-modules via the isomorphism $M_{\PP^2}(r,1,n)\cong M_-(\alpha_r)$ in \S~\ref{subsec:3-6}.
By these descriptions we see that $M_+(r,1, n)$ is smooth and we can compute Hodge polynomials of $M_+(r,1,n)$ from those of $M_{\PP^2}(r,1,n)$. 

The paper is organized as follows.
In \S 2 we introduce a description of Picard group of $M_{\PP^2}(r,1,n)$ in terms of $\theta$-stability of right $B$-modules. 
In \S 3 we study the wall-crossing phenomena of moduli of $\theta$-semistable right $B$-modules. 
This is described as stratified Grassmann bundles and this gives a proof of Main Theorem~\ref{inmain}.
In the Appendix by using Bridgeland stability we give a proof of Proposition~\ref{p2}, which is similar to \cite[Main Theorem~5.1]{O}.\\

\noindent\emph{\bf{Notation.}}
We fix the following notation in the paper:\\
If $A$ is a matrix we denote by ${}^tA$ the transpose
of $A$. If $V$ is $\C$-vector space then we denote 
by $V^{\vee}$ the dual vector space $\Hom_{\C}(V,\C)$
of $V$ and we also denote by $Gr(V,i)$ the Grassmann
manifold of $i$-dimensional subspaces of V.
We consider the polynomial ring $\C[x_0,x_1,x_2]$ and the tensor product $V\otimes \C[x_0,x_1,x_2]$with a vector space $V$. 
For any monomial $m\in\C[x_0,x_1,x_2]$ we put $$V\otimes m:=\{v\otimes m\in V\otimes \C[x_0,x_1,x_2]\mid v\in V\}.$$
We put $\mathbf x:=(x_{0},x_1, x_2)$ and denote by $V\otimes\mathbf x$ the direct sum $$(V\otimes x_{0})\oplus(V\otimes x_{1})\oplus(V\otimes x_{2})$$ of $V$. 
We denote the $i$-th embedding $V\to V\otimes\mathbf x$ and the $i$-th projection $V\otimes\mathbf x\to V$ by $x_i$ and $x_i^{\ast}$ for $i=0,1,2$, respectively. 
For morphisms $f_{ij}\colon U\to V$ between vector spaces $U$ and $V$ for $i,j=0,1,2$, we denote by the matrix
$\begin{pmatrix}
f_{00}&f_{01}&f_{02}\\
f_{10}&f_{11}&f_{12}\\
f_{20}&f_{21}&f_{22}
\end{pmatrix}
$
the morphism $$f:=\sum_{i,j}x_i\circ f_{ij}\circ x^{\ast}_j\colon U\otimes\mathbf x\to V\otimes\mathbf x.$$
We also use similar notation for vector bundles. 

For any path algebra of quiver with relations, we identify modules over the algebra and representations of the corresponding quiver with relations.

\section{Picard group of $M_{\PP^2}(r,1,n)$}
\label{sec:2}
We introduce an explicit description of the Picard group of $M_{\PP^2}(r,1,n)$ in terms of $B$-modules.

\subsection{Finite dimensional algebra $B$}\label{ex}
Finite dimensional algebra 
$B=\End_{\PP^2}(\E)$ is written as a path algebra of the following quiver  
with relations
$(Q,J)$, where $Q$ is defined as
$$Q:=\xymatrix{\stackrel{v_{-1}}{\bullet}&\ar[l]_{\gamma_i}\stackrel{v_0}{\bullet}&\ar[l]_{\delta_j}\stackrel{v_1}{\bullet},} (i,j=0,1,2)$$ 
and $J$ is generated by the following relations 
\begin{equation}\label{rel}
\gamma_i\delta_j+\gamma_j\delta_i=0, (i,j=0,1,2).
\end{equation}
We identify categories $\D(\PP^2)$ and $\D(B)$ and groups $K(\PP^2)$ and $K(B)$ via $\Phi$ and $\varphi$.
For example, we denote $\mo_{\PP^2}(i-1)[2-i]$ and the corresponding simple $B$-module $$\C v_i=\Phi(\mo_{\PP^2}(i-1)[2-i])$$ by the same symbols $S_i$ for $i=-1,0,1$.

We put $e_i:=[S_i]\in K(B)$ ($i=-1,0,1$). 
Then we have $$K(B)=\Z e_{-1}\oplus\Z e_0 \oplus\Z e_1.$$ 
We denote the dual base by $\{e^{\ast}_{-1},e^{\ast}_0, e^{\ast}_1\}$.
For $\alpha_{-1}, \alpha_0,\alpha_1\in\Z$, by 
\begin{equation}
\label{alpha}
\alpha=\begin{pmatrix}
\alpha_{-1}\\
\alpha_0\\
\alpha_1
\end{pmatrix}\in K(B)
\end{equation}
we denote $\alpha=\alpha_{-1}e_{-1}+\alpha_0e_0+\alpha_1e_1\in K(B)$ and for $\theta^{-1}, \theta^0, \theta^{1}\in\R$, by $$\theta=(\theta^{-1},\theta^0,\theta^1)\in\Hom_{\Z}(K(B),\R),$$ we denote $\theta=\theta^{-1}e_{-1}^{\ast}+\theta^{0}e_{0}^{\ast}+\theta^{1}e_{1}^{\ast}\in\Hom_{\Z}(K(B),\R)$.

\subsection{Moduli of semistable $B$-modules}
For any $\alpha\in K(B)$ and $\theta\in\alpha^{\perp}\otimes\R\subset\Hom_{\Z}(K(B),\R)$, we define $\theta$-stability as follows. Here $$\alpha^{\perp}=\{\theta\in \Hom_{\Z}(K(B),\Z)\mid\theta(\alpha)=0\}.$$

\begin{defi}
A right $B$-module $E$ with $[E]=\alpha$ in $K(B)$ is said to be $\theta$-semistable if for any proper submodule $F\subset E$, the inequality $\theta(F)\ge\theta(E)=0$ holds. If the inequality is always strict, then $E$ is said to be $\theta$-stable. 
\end{defi}
 
By $M_B(\alpha,\theta)$ we denote a moduli 
scheme of $\theta$-semistable $B$-module
$E$ with $[E]=\alpha$. 
We define wall and chamber structure on $\alpha^{\perp}\otimes\R$
as follows. Wall is a ray $W=\R_{\ge 0}\theta^W$ in 
$\alpha^{\perp}\otimes\R$
satisfying that there exists a $\theta^W$-semistable $B$-module
$E$ such that $E$ has a proper submodule $F$ with 
$[F]\notin\Q_{>0}\alpha$ in $K(B)$ and $\theta^W(F)=0$. 
A chamber is a connected component 
of $(\alpha^{\perp}\otimes\R)\setminus\cup W$, where $W$ runs over the 
set of all walls in $\alpha^{\perp}\otimes\R$. 
For any chamber $C\subset\alpha^{\perp}\otimes\R$, the moduli space
$M_B(\alpha,\theta)$ does not depend on the choice of $\theta\in C$.

Here we assume that 
$\alpha\in K(B)$ is indivisible and 
$\theta$ is not on any wall in 
$\alpha^{\perp}$, then there exists a universal family $\U$
of $B$-modules on $M_B(\alpha,\theta)$
\begin{equation}\label{univ1}
\U:=\left(\U_{-1}\xrightarrow{\hspace{0.8cm}\gamma^{\ast}_i\hspace{0.8cm}}\U_0\xrightarrow{\hspace{0.8cm}\delta^{\ast}_j\hspace{0.8cm}}\U_1\right), (i,j=0,1,2)
\end{equation}
where $\U_{-1}$, $\U_0$ and $\U_1$ are vector bundles corresponding to vertices $v_{-1},v_0, v_1$ and $\gamma_i^{\ast}\colon \U_{-1}\to \U_0$, $\delta_j^{\ast}\colon \U_0\to \U_1$ are morphisms corresponding to arrows $\gamma_i$, $\delta_j$.

\subsection{Deformations of $B$-modules}
We take $\alpha\in K(B)$ defined by (\ref{alpha}). 
For any $B$-module $E$ with $[E]=\alpha$, by choosing basis of $Ev_{-1}$, $Ev_0$ and $Ev_1$ we have an isomorphism
\begin{equation}\label{mode}
E\cong (\C^{\alpha_{-1}}\stackrel{C_i}{\to}\C^{\alpha_{0}}
\stackrel{D_j}{\to}\C^{\alpha_{1}}),
\end{equation}
where $C_i\in\Hom_{\C}(\C^{\alpha_{-1}},\C^{\alpha_{0}})$
and $D_j\in\Hom_{\C}(\C^{\alpha_{0}},\C^{\alpha_{1}})$
correspond to the action of $\gamma_i$ and $\delta_j$
respectively for $i,j=0,1,2$.
The pull back of the heart $\Mod B$ of the standard t-structure of $\D(B)$ by $\Phi$ is a full subcategory $\mathcal{A}:=\langle\mo(-1)[2], \mo[1],\mo(1)\rangle$ of $\D(\PP^2)$.
The following complex of
coherent sheaves on $\PP^2$  
$$\mo(-1)^{\alpha_{-1}}\xrightarrow{\sum_iC_ix_i}
\mo^{\alpha_{0}}\xrightarrow{\sum_iD_jx_j}
\mo(1)^{\alpha_1}$$
corresponds to $E$ in (\ref{mode}) via the equivalence $\Phi$,
where $x_0, x_1, x_2$ are homogeneous coordinates
of $\PP^2$.
By \cite[Lemma~4.6~(1)]{O}, $\Ext^2_B(E,E)$ is isomorphic to 
the cokernel of the map
\begin{multline}\label{ext}
\left(\bigoplus_i
\Hom_{\C}(\C^{\alpha_{-1}},\C^{\alpha_{0}})x_i\right)
\bigoplus\left(\bigoplus_j
\Hom_{\C}(\C^{\alpha_{0}},\C^{\alpha_{1}})x_j\right)\\
\stackrel{d}{\to}\bigoplus_{i\le j}
\Hom_{\C}(\C^{\alpha_{-1}},\C^{\alpha_{1}})x_ix_j,
\end{multline}
where the map $d$ is defined by
$$d(\sum_i\xi_i x_i, \sum_j\eta_j x_j)=\sum_{i,j}(D_j\xi_i+\eta_iC_j)x_ix_j$$ for $\xi_i\in\Hom_{\C}(\C^{\alpha_{-1}},\C^{\alpha_{0}})\text{ and }\eta_j\in\Hom_{\C}(\C^{\alpha_{0}},\C^{\alpha_{1}}), (i,j=0,1,2).$ 
We study the deformation functor $\mathcal{D}_E\colon$(Artin/$k$) $\to$ (Sets). 
For any Artin local $k$-ring $R$, the set $\mathcal{D}_E(R)$ consists of right $R\otimes B$-modules $E^R$ $$E^R=(R^{\alpha_{-1}}\stackrel{C^R_i}{\to}R^{\alpha_0}\stackrel{D^R_j}{\to}R^{\alpha_1}), D^R_jC^R_i+D^R_iC^R_j=0$$ such that $C^R_i\equiv C_i, D^R_j\equiv D_j$ modulo $m_R$ for each $i,j=0,1,2$, where $C^R_i$ and $D^R_j$ is $R$-linear maps and $m_R$ is the maximal ideal of $R$. We show the following lemma.
\begin{lemm}\label{def}
The deformation functor $\mathcal{D}_E$ has an obstruction theory with values in $\Ext^2_B(E,E)$.
\end{lemm}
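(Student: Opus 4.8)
The plan is to verify the standard criterion for a deformation functor to admit an obstruction theory in a given vector space: namely, that for every small extension $0\to I\to \widetilde R\to R\to 0$ of Artin local $k$-rings (so $I\cong k$ and $I\cdot m_{\widetilde R}=0$) and every $E^R\in\mathcal D_E(R)$, there is a canonical element $\mathrm{ob}(E^R)\in \Ext^2_B(E,E)\otimes_k I$ whose vanishing is equivalent to the existence of a lift $E^{\widetilde R}\in\mathcal D_E(\widetilde R)$, and that this assignment is functorial in the small extension. First I would fix a small extension as above and an $R$-point $E^R=(R^{\alpha_{-1}}\xrightarrow{C^R_i}R^{\alpha_0}\xrightarrow{D^R_j}R^{\alpha_1})$ satisfying the quadratic relations $D^R_jC^R_i+D^R_iC^R_j=0$. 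Since the underlying $\widetilde R$-modules $\widetilde R^{\alpha_{-1}},\widetilde R^{\alpha_0},\widetilde R^{\alpha_1}$ are free, I can always lift the $R$-linear maps $C^R_i,D^R_j$ to $\widetilde R$-linear maps $\widetilde C_i,\widetilde D_j$ (choose any set-theoretic lift of matrix entries); these are unique up to adding $\xi_i\otimes 1_I$ and $\eta_j\otimes 1_I$ with $\xi_i\in\Hom_\C(\C^{\alpha_{-1}},\C^{\alpha_0})$, $\eta_j\in\Hom_\C(\C^{\alpha_0},\C^{\alpha_1})$. The obstruction is then the failure of the relations: set $o_{ij}:=\widetilde D_j\widetilde C_i+\widetilde D_i\widetilde C_j\in\Hom_{\widetilde R}(\widetilde R^{\alpha_{-1}},\widetilde R^{\alpha_1})$ for $i\le j$. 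Because the relations hold over $R$, each $o_{ij}$ maps to $0$ in $\Hom_R$, hence lies in $\Hom_\C(\C^{\alpha_{-1}},\C^{\alpha_1})\otimes_k I$. I would package $(o_{ij})_{i\le j}$ as an element of $\big(\bigoplus_{i\le j}\Hom_\C(\C^{\alpha_{-1}},\C^{\alpha_1})x_ix_j\big)\otimes_k I$, which is exactly the target of the map $d$ in \eqref{ext}, and take $\mathrm{ob}(E^R)$ to be its class in $\coker d\cong\Ext^2_B(E,E)$ (tensored with $I$).

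The key steps are then: (i) \emph{well-definedness}: show $\mathrm{ob}(E^R)$ is independent of the chosen lifts $\widetilde C_i,\widetilde D_j$. Replacing $\widetilde C_i$ by $\widetilde C_i+\xi_i\otimes 1$ and $\widetilde D_j$ by $\widetilde D_j+\eta_j\otimes 1$ changes $o_{ij}$ by $D_j\xi_i+D_i\xi_j+\eta_jC_i+\eta_iC_j$ modulo terms in $I^2=0$; collecting these over $i\le j$ this change is precisely $d(\sum_i\xi_ix_i,\sum_j\eta_jx_j)$ in the notation of \eqref{ext}, so the class in $\coker d$ is unchanged. (ii) \emph{liftability $\iff$ vanishing}: a genuine lift $E^{\widetilde R}$ is exactly a choice of lifts with all $o_{ij}=0$, and by (i) such a choice exists iff the class $\mathrm{ob}(E^R)$ in $\coker d$ vanishes — if the class is zero, then $(o_{ij})$ lies in the image of $d$, and subtracting the corresponding $(\xi_i,\eta_j)$-correction from our lift kills all the $o_{ij}$, producing a module structure over $\widetilde R$ (one must also check the congruence $\widetilde C_i\equiv C_i$, $\widetilde D_j\equiv D_j$ mod $m_{\widetilde R}$, which is automatic since the correction terms lie in $I\subset m_{\widetilde R}$). (iii) \emph{functoriality}: for a morphism of small extensions one checks that the obvious lifts are compatible, so the obstruction classes correspond; I would also record the auxiliary facts that when a lift exists the set of lifts is a torsor under $\Ext^1_B(E,E)$ and $\mathcal D_E(k[\e]/\e^2)\cong\Ext^1_B(E,E)$, citing the identifications in \cite[Lemma~4.6]{O}, though strictly only the obstruction statement is asserted.

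The main obstacle — really the only nontrivial point — is the bookkeeping in step (i): matching the coboundary produced by changing the lift with the explicit differential $d$ of \eqref{ext}, and being careful about the indexing convention $i\le j$ versus the symmetric sum $\sum_{i,j}$ appearing in the formula for $d$. Concretely, one has $o_{ii}=2\widetilde D_i\widetilde C_i$ and $o_{ij}=o_{ji}$ for $i<j$, so the natural target is $\bigoplus_{i\le j}$, and the relation-change term $D_j\xi_i+D_i\xi_j+\eta_jC_i+\eta_iC_j$ must be read off consistently with $d(\sum_i\xi_ix_i,\sum_j\eta_jx_j)=\sum_{i,j}(D_j\xi_i+\eta_iC_j)x_ix_j=\sum_{i\le j}(D_j\xi_i+D_i\xi_j+\eta_iC_j+\eta_jC_i)x_ix_j$. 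Once this identification is in place the rest is the formal mechanism of obstruction theories, and the lemma follows; alternatively one may simply invoke the general principle that for a functor presented by a quadratic map $F\colon V\to W$ between affine spaces (here $V=\bigoplus_i\Hom x_i\oplus\bigoplus_j\Hom x_j$, $W=\bigoplus_{i\le j}\Hom x_ix_j$, $F=$ the relations) the deformation functor of a solution $v_0$ has tangent space $\ker dF_{v_0}$ and obstruction space any quotient of $W$ through which the obstruction factors — here $\coker dF_{v_0}=\coker d=\Ext^2_B(E,E)$ by \cite[Lemma~4.6~(1)]{O}.
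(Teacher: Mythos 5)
Your proposal is correct and follows essentially the same route as the paper: lift the matrices $C^R_i,D^R_j$ over the small extension, measure the failure of the relations $D_jC_i+D_iC_j=0$ as an element of $\bigoplus_{i\le j}\Hom_{\C}(\C^{\alpha_{-1}},\C^{\alpha_1})x_ix_j$ tensored with the ideal, and take its class in the cokernel of the map $d$ of \eqref{ext}, which is $\Ext^2_B(E,E)$ by \cite[Lemma~4.6~(1)]{O}. You in fact spell out the well-definedness and the lifting criterion (including the $i\le j$ versus $\sum_{i,j}$ bookkeeping) in more detail than the paper, which merely asserts them.
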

\begin{proof}
For any small extension $$0\to\mathfrak{a}\to R'\to R\to0$$ with $m_R\mathfrak{a}=0$ and $E^R=(C^R_i,D^R_j)\in\mathcal{D}_E(R)$, we write $C^R_i=C_i+\xi_i$ and $D^R_j=D_j+\eta_j$ for $\xi_i\in\Hom_{\C}(\C^{\alpha_{-1}},\C^{\alpha_0})\otimes m_R$ and $\eta_j\in\Hom_{\C}(\C^{\alpha_{0}}, \C^{\alpha_1})\otimes m_R$.
By the isomorphism $m_R\cong m_{R'}/\mathfrak{a}$, we have lifts $\xi'_i\in\Hom_{\C}(\C^{\alpha_{-1}}, \C^{\alpha_0})\otimes m_{R'}$ and $\eta'_j\in\Hom_{\C}(\C^{\alpha_{0}},\C^{\alpha_1})\otimes m_{R'}$ of $\xi_i$ and $\eta_j$ respectively.
We put $C^{R'}_j:=C_j+\xi'_i$, $D^{R'}_j:=D_j+\eta'_j$.

Since $D^{R'}_jC^{R'}_i+D^{R'}_iC^{R'}_j
\equiv D^R_jC^R_i+D^R_iC^R_j=0$ modulo
$\mathfrak{a}$, we have an element 
$$\sum_{i\le j}\left(D^{R'}_jC^{R'}_i+D^{R'}_iC^{R'}_j
\right)x_ix_j\in\Hom_{\C}(\C^{\alpha_{-1}},\C^{\alpha_1})
x_ix_j\otimes \mathfrak{a}.$$
By the image of this element to the cokernel
of (\ref{ext}) tensored by $\mathfrak{a}$, 
we define an element 
$\mathfrak{o}(E^R)$ in 
$\Ext_B^2(E,E)\otimes\mathfrak{a}$.
This defines a well-defined map $\mathfrak{o}\colon
\mathcal{D}_E(R)\to \Ext_B^2(E,E)\otimes\mathfrak{a}$
and we easily see that $E^R$ lifts to $\mathcal{D}_E(R')$ if and only if
$\mathfrak{o}(E^R)=0$.
\end{proof}

\subsection{Chambers $C_{\PP^2}$
and Picard group of $M_{\PP^2}(r,1,n)$}
We take \begin{equation}\label{ind}
\alpha_r=
\begin{pmatrix}
n\\2n+r-1\\n-1
\end{pmatrix}
\in K(B)
\end{equation} 
such that $\ch(\alpha_r)=
-(r,1,\frac{1}{2}-n)$ and assume that $M_{\PP^2}(r,1,n)$ 
is not empty set. Then
there exists a chamber $C_{\PP^2}\subset
\alpha_r^{\perp}\otimes\R$ such that $\Phi(\ \cdot\ [1])$ induces an isomorphism
\begin{equation}
\label{isom2}
M_{\PP^2}(r,1,n)\cong M_B(\alpha_r,\theta)
\end{equation}
for any $\theta\in C_{\PP^2}$ 
(see \cite[Main~Theorem~5.1]{O} or \cite{P}).
The chamber $C_{\PP^2}$ is characterized as follows. 
We put $\theta_{\PP^2}:=(-r-1,1,-1+r)$.
Then we have $\theta_{\PP^2}(\mo_x)=0$ 
for any skyscraper sheaf at $x\in\PP^2$.
The closure of $C_{\PP^2}$ contains 
the ray $\R_{\ge 0}\theta_{\PP^2}$ and
for certain $\theta\in C_{\PP^2}$ we have 
$\theta(\mo_x)>0$ and $M_B(\alpha_r,\theta)\neq\emptyset$.

If we put $\theta_0:=(-n+1,0,n)$,
then by \cite[Lemma~6.2]{O} 
we have 
\begin{equation}\label{chaeq}
\R_{>0}\theta_0+\R_{>0}\theta_{\PP^2}
\subset C_{\PP^2}.
\end{equation}
In the case where $r=1$, by \cite[Lemma~6.3~(2)]{O}
we have 
$\R_{>0}\theta_0+\R_{>0}\theta_{\PP^2}=C_{\PP^2}$ for $n\ge 2$ 
In the case where $r\ge 2$, we will describe 
the chamber in \S~\ref{subsec:3-4}.

Since $\alpha_r$ is indivisible
by the definition (\ref{ind}), for 
any $\theta\in C_{\PP^2}$ we have 
a universal family $\U$ on $M_B(\alpha_r, \theta)$ as
in (\ref{univ1}).
We define a homomorphism 
from $\alpha_r^{\perp}$ to $\Pic\left(M_B(\alpha_r,\theta)
\right)$ by 
\begin{equation*}
\rho(\mathbf m)=m_{-1}\det(\U_{-1})+
m_0\det(\U_0)+m_1\det(\U_1),
\end{equation*} 
for $\mathbf m=(m_{-1}, m_{0}, m_{1})\in\alpha_r^{\perp}$. 
By (\ref{isom2}) this
gives a homomorphism $\rho\colon\alpha_r^{\perp}\to
\Pic\left(M_{\PP^2}(r,1,n)\right)$.
Then by \cite{D} we have the following proposition.
\begin{prop}\label{pic}
The above map $\rho\colon\alpha_r^{\perp}\to\Pic 
\left(M_{\PP^2}(r,1,n)\right)$ is an isomorphism.
Furthermore $\rho(-3\theta_{\PP^2})$ is
the canonical bundle of $M_{\PP^2}(r,1,n)$. 
\end{prop}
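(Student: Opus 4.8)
The plan is to identify $\Pic(M_{\PP^2}(r,1,n))$ with $\alpha_r^\perp$ by invoking the descent machinery of Drezet--Narasimhan-type results (here cited as \cite{D}) for moduli of semistable modules over a finite-dimensional algebra, and then to pin down the canonical bundle by a direct Chern-class computation via the equivalence $\Phi$. First I would note that since $\alpha_r$ is indivisible (immediate from the last coordinate $n-1$ together with $n$, or from $\ch(\alpha_r)$ having a primitive leading term) and $\theta \in C_{\PP^2}$ lies in no wall, every $\theta$-semistable $B$-module with class $\alpha_r$ is in fact $\theta$-stable, so $M_B(\alpha_r,\theta)$ is a fine moduli space carrying the universal family $\U$ of \eqref{univ1}. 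The determinant line bundles $\det(\U_{-1}), \det(\U_0), \det(\U_1)$ generate a rank-$3$ lattice of line bundles, but the dimension vector relation forces a rank-$1$ kernel; the precise statement of \cite{D} is that $\mathbf m \mapsto \rho(\mathbf m)$ descends to, and is an isomorphism onto, the Picard group exactly when $\mathbf m$ is orthogonal to $\alpha_r$, which is the definition of $\alpha_r^\perp$. Thus the map $\rho$ is well defined on $\alpha_r^\perp$ and is an isomorphism; this is the content of the first assertion.

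The second assertion is a computation of $K_{M}$ where $M = M_{\PP^2}(r,1,n)$. The approach is to use smoothness of $M$ (the obstruction space $\Ext^2_B(E,E)$ vanishes for stable $E$ with this class, by the dimension count behind Proposition~\ref{inpro1}, so $M$ is smooth of the expected dimension) together with the tangent bundle identification $T_M \cong \mathcal{E}xt^1$ of the universal complex, or equivalently $T_M$ is the cohomology of the complex
$$
\bigoplus_i \mathcal{H}om(\U_{-1},\U_0) \oplus \bigoplus_j \mathcal{H}om(\U_0,\U_1) \longrightarrow \bigoplus_{i\le j}\mathcal{H}om(\U_{-1},\U_1)
$$
in the middle, relative version of \eqref{ext}, with $\mathcal{H}om(\U_{-1},\U_{-1})\oplus\mathcal{H}om(\U_0,\U_0)\oplus\mathcal{H}om(\U_1,\U_1)$ on the left. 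Taking $\det$ of the alternating sum and using $\det \mathcal{H}om(\U_a,\U_b) = (\det \U_b)^{\otimes \alpha^a}\otimes(\det\U_a)^{\otimes -\alpha^b}$ (with $\alpha^{-1}=n$, $\alpha^0=2n+r-1$, $\alpha^1=n-1$), one gets $\det T_M$ as an explicit integer combination of $\det\U_{-1},\det\U_0,\det\U_1$; after simplification this combination should equal $\rho$ evaluated at $3\theta_{\PP^2}=(−3r−3,\,3,\,3r−3)$, whence $K_M = \det T_M^\vee = \rho(-3\theta_{\PP^2})$. A clean way to organize the bookkeeping is to observe that the multiplicities $3$ in each $x_ix_j$-summand and the signs in $d$ reflect the anti-commutativity relations \eqref{rel}, so the Euler-characteristic contribution is governed by the Koszul-type syzygies of $\mathbb{P}^2$; one can also cross-check the answer against the known fact that $-3\theta_{\PP^2}$ annihilates $[\mo_x]$ up to the expected twist, consistent with $K_{\mathbb{P}^2} = \mo(-3)$ and the $r=1$ Hilbert scheme case where $K_{(\mathbb{P}^2)^{[n]}}$ is classical.

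The main obstacle I anticipate is not the descent statement — that is essentially quoting \cite{D} once indivisibility and genericity of $\theta$ are in place — but rather getting the canonical bundle computation to land \emph{exactly} on the coefficient vector $-3\theta_{\PP^2} = (-3r-3, 3, 3r-3)$ rather than on some other element of $\alpha_r^\perp$: the relative $\mathcal{E}xt$ complex has three terms and the signs coming from the relations \eqref{rel} must be tracked carefully, and one must verify that the resulting class indeed lies in $\alpha_r^\perp$ (a consistency check: its pairing with $\alpha_r$ must vanish, i.e. $(-3r-3)n + 3(2n+r-1) + (3r-3)(n-1) = 0$, which one should confirm) before identifying it under the isomorphism $\rho$. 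A secondary point requiring care is the passage between the $B$-module picture and the sheaf picture: one should make sure the tangent-obstruction theory of Lemma~\ref{def}, stated there for the local deformation functor, globalizes to give $T_M$ as the middle cohomology of the relativized \eqref{ext}, but this is standard once $M$ is known to be smooth and fine.
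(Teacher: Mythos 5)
Your proposal is correct, and it in fact does more than the paper, which offers no argument of its own: Proposition~\ref{pic} is stated there as a direct consequence of Dr\'ezet's results \cite{D} on the Picard group and canonical bundle of $M_{\PP^2}(r,1,n)$, transported through the identification (\ref{isom2}) and the determinant bundles of the universal family (\ref{univ1}). For the first assertion your route is therefore the same as the paper's: after noting that $\alpha_r$ is indivisible (indeed $\gcd(n,n-1)=1$), that $\theta$ lies in a chamber so semistable equals stable and a universal family $\U$ exists, and that $\rho(\mathbf m)$ is independent of the normalization $\U_a\mapsto\U_a\otimes L$ exactly when $\mathbf m\in\alpha_r^{\perp}$, the isomorphism statement is a citation of \cite{D}, not something you can derive formally. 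For the second assertion you replace the citation by a self-contained determinant computation, and the bookkeeping you were worried about does close up: writing $T_M$ as the middle cohomology of the relativized three-term complex underlying (\ref{ext}), and using that the relative $\mathcal{E}xt^2$ vanishes, that $\mathcal{H}om_B(\U,\U)\cong\mo_M$, and that each $\det\mathcal{H}om(\U_a,\U_a)$ is trivial, one finds
$$\det T_M=(\det\U_{-1})^{-3(2n+r-1)+6(n-1)}\otimes(\det\U_0)^{3n-3(n-1)}\otimes(\det\U_1)^{3(2n+r-1)-6n},$$
whose exponent vector is $(-3r-3,\,3,\,3r-3)=3\theta_{\PP^2}$, so $K_M=\rho(-3\theta_{\PP^2})$ as claimed; your orthogonality check also holds, since $\theta_{\PP^2}(\alpha_r)=(-r-1)n+(2n+r-1)+(r-1)(n-1)=0$. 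The factor $3$ in the middle term and the $6$ copies of $\mathcal{H}om(\U_{-1},\U_1)$ (one for each relation with $i\le j$) are exactly what produce the multiple of $\theta_{\PP^2}$, consistent with $K_{\PP^2}=\mo_{\PP^2}(-3)$ and with $\theta_{\PP^2}([\mo_x])=0$. The one caveat is that the isomorphism onto $\Pic$ in the first assertion genuinely rests on \cite{D}; your argument, like the paper's, quotes it rather than proves it.
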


\section{Proof of Main Theorem~\ref{inmain}}
\label{sec:3}
We put $\alpha_{r,n}:={}^t\left(n,2n-1+r,n-1\right)\in K(B)$.
In the following we omit "$n$" and put $\alpha_r=\alpha_{r,n}$ except in \S~\ref{subsec:3-4}.
Note that $\ch(\alpha_r)=-(r,1,\frac{1}{2}-n)$.
We put $\theta_0:=(-n+1,0,n)\in\alpha^{\perp}_r$ and consider $\theta_+:=\theta_0+\e(2n-1+r,-n,0)$ and $\theta_-:=\theta_0-\e(2n-1+r,-n,0)$ for $\e>0$ small enough such that $\theta_{\pm}$ lie on no wall. 
We put $M_{\pm}(\alpha_r):=M_B(\alpha_r,\theta_{\pm})$ and $M_0(\alpha_r):=M_B(\alpha_r,\theta_0)$.
By (\ref{isom2}) and (\ref{chaeq}) we have an isomorphism 
\begin{equation}\label{iso}
\MPrn\cong M_-(\alpha_r).
\end{equation}
By $C_\pm$ we denote the chamber containing $\theta_\pm$ respectively and put $W_0:=\R_{\ge 0}\theta_0$.
Since $\theta_-\in C_{\PP^2}$, we have $C_-=C_{\PP^2}$.
We automatically get the following diagram: 
\begin{equation}
\label{thad}
\xymatrix{
\ar[dr]^{f_+}M_+(\alpha_r)&&M_-(\alpha_r).\ar[dl]_{f_-}\\
&M_0(\alpha_r)&
}
\end{equation}
In this section we see that this diagram is described by stratified Grassmann bundles and give a proof of Main Theorem~\ref{inmain}.

\subsection{Kronecker modules}
\label{subsec:3-1}
We consider the 3-Kronecker quiver, which has 2 vertices $v_{-1}, v_{1}$ and $3$ arrows $\beta_0,\beta_1,\beta_2$ from $v_{1}$ to $v_{-1}$ $$\begin{minipage}{6cm}\xymatrix{\stackrel{v_{-1}}{\bullet}&&\ar[ll]^{\beta_i}\stackrel{v_1}{\bullet}}\end{minipage}, (i=0,1,2).$$ and consider the path algebra $T$.
Any right $T$-module $G$ has a decomposition $G=Gv_{-1}\oplus Gv_1$ and actions of $\beta_i$ define linear maps $Gv_{-1}\to Gv_1$ for $i=0,1,2$. By abbreviation we define $\theta_0(G)\in\R$ by $$\theta_0(G):=(-n+1)\dim_\C Gv_{-1}+n\dim_\C Gv_1.$$
We denote by $K(T)$ the Grothendieck group of the abelian category of finitely generated right $T$-modules and take $\alpha_T:=n[\C v_{-1}]+(n-1)[\C v_1]\in K(T)$.
\begin{defi}
An right $T$-module $G$ with $[G]=\alpha_T\in K(T)$ is stable if and only if for any non-zero proper submodule $G'$ of $G$ we have an inequality $\theta_0(G')> 0$.
\end{defi}
We denote by $M_T(\alpha_T)$
the moduli space of stable $T$-modules 
$G$ with $[G]=\alpha_T$. 
For any $B$-module $E=\left(\C^n\stackrel{C_i}{\to}
\C^{2n-1+r}\stackrel{D_j}{\to}
\C^{n-1}\right)$, we define $T$-module $E_T$ by 
$$E_T:=\left(\C^n\stackrel{A_i}{\to}\C^{n-1}\right),$$
where $C_i$ and $D_j$ are matrices with 
suitable sizes and we define $A_i$ by 
$A_i:=D_{i+2}C_{i+1}$ for each 
$i\in\Z/3\Z$.
\begin{lemm}\label{lemkro} 
For any $B$-module $E=\left(\C^n\stackrel{C_i}{\to}
\C^{2n-1+r}\stackrel{D_j}{\to}
\C^{n-1}\right)$,
the following hold.
\begin{itemize}
\item[(1)] $E$ is $\theta_0$-semistable if and only if 
$E_T$ is stable.
\item[(2)] $E$ is $\theta_0$-stable if and only if 
$E_T$ is stable and $$\Homr(E,S_0)
=\Homr(S_0,E)=0.$$
\item[(3)] The following are equivalent.
\begin{itemize}
\item[(a$-$)] $E$ is $\theta_-$-stable.  
\item[(b$-$)] $E$ is $\theta_-$-semistable. 
\item[(c$-$)] $E_T$ is stable and $\Homr(E,S_0)
=0$.
\end{itemize}
\item[(4)] The following are equivalent.
\begin{itemize}
\item[(a$+$)] $E$ is $\theta_+$-stable.
\item[(b$+$)] $E$ is $\theta_+$-semistable.
\item[(c$+$)] $E_T$ is stable and $\Homr(S_0,E)
=0$.
\end{itemize}
\end{itemize}
\end{lemm}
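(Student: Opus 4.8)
The plan is to reduce everything to a careful analysis of the two distinguished walls bordering $W_0 = \R_{\ge 0}\theta_0$ inside $\alpha_r^\perp$, and to identify the destabilizing subobjects and quotients with the simple module $S_0 = \Phi(\mo_{\PP^2}[1])$. First I would observe that $\theta_0$ pairs to $0$ not only with $\alpha_r$ but also with $[S_0] = e_0$: indeed $\theta_0 = (-n+1,0,n)$ and $e_0 = {}^t(0,1,0)$, so $\theta_0(S_0) = 0$. This is the source of the strict semistability along $W_0$. The deformation direction $\e(2n-1+r,-n,0)$ satisfies $(2n-1+r,-n,0)(e_0) = -n < 0$, so on $\theta_+$ a copy of $S_0$ as a \emph{submodule} is forbidden (it would have $\theta_+(S_0) = -\e n < 0$), while on $\theta_-$ a copy of $S_0$ as a \emph{quotient} is forbidden. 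I would make this precise: for the quiver $(Q,J)$, the only simple module $S_i$ with $\theta_0(S_i) = 0$ is $S_0$; hence the only subobjects $F$ of a $\theta_0$-semistable $E$ with $\theta_0(F) = 0$ and $[F]\notin\Q\alpha_r$ are built from copies of $S_0$, and dually for quotients.

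Next I would carry out the following steps. Step (1): establish the functor $E\mapsto E_T$ is well-defined on $\theta_0$-semistable modules, i.e. that for $\theta_0$-semistable $E$ the module $E_T = (\C^n\xrightarrow{A_i}\C^{n-1})$ with $A_i = D_{i+2}C_{i+1}$ is stable. The key point is that the relations $D_jC_i + D_iC_j = 0$ force the $A_i$ to satisfy the Kronecker-module structure correctly, and that $\theta_0$-semistability of $E$ bounds the dimension vectors of sub-representations of $E_T$ from below: if $G'\subset E_T$ destabilizes, one lifts $G'$ to a sub- or super-module of $E$ contradicting $\theta_0$-semistability. Conversely, given $E_T$ stable, one shows $E$ cannot have a proper submodule $F$ with $\theta_0(F) < 0$; the only candidates with $\theta_0(F) = 0$ come from $S_0$, which explains why $\theta_0$-stability of $E$ (the strict inequality) is exactly $\theta_0$-semistability together with $\Homr(S_0,E) = \Homr(E,S_0) = 0$ — a copy of $S_0$ in $E$ gives either a sub- or a quotient $S_0$, and $\Homr(S_0,E)$ (resp. $\Homr(E,S_0)$) detects exactly these. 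This gives (1) and (2).

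For (3) and (4), the argument is symmetric, so I describe (3). Step (3a): since $\theta_- \in C_{\PP^2} = C_-$ lies on no wall, $\theta_-$-semistability equals $\theta_-$-stability, giving (b$-$)$\Leftrightarrow$(a$-$) immediately. Step (3b): suppose $E$ is $\theta_-$-semistable. Because $\theta_- = \theta_0 - \e(2n-1+r,-n,0)$ and $\e$ is small, any subobject $F$ with $\theta_-(F) < 0$ must have $\theta_0(F) \le 0$; if $\theta_0(F) < 0$ it already destabilizes for $\theta_0$, and one deduces $E_T$ is stable as in Step (1); the boundary cases $\theta_0(F) = 0$ force $F$ to be a sum of copies of $S_0$ with $\theta_-(F) = \e n\cdot(\text{mult}) \ge 0$, so no such $F$ destabilizes — hence $\theta_-$-semistability gives $E_T$ stable. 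To get $\Homr(E,S_0) = 0$: a nonzero map $E\to S_0$ has image $S_0$, a quotient of $E$, hence a subobject-complement argument (or directly: $\ker(E\to S_0)$ is a submodule with $\theta_-(\ker) = \theta_-(E) - \theta_-(S_0) = 0 - \e n < 0$) contradicts semistability. Step (3c): conversely, if $E_T$ is stable and $\Homr(E,S_0) = 0$, then any proper submodule $F\subsetneq E$ has $\theta_0(F) \ge 0$ by Step (1), and $\theta_0(F) = 0$ would force $F$ a sum of copies of $S_0$, but $\Homr(E,S_0) = 0$ together with the relations rules out a quotient $S_0$ — more precisely one shows the only way $\theta_0(F) = 0$ with $F$ proper nonzero is via $S_0\hookrightarrow E$, which is compatible with $\Homr(E,S_0) = 0$, and then $\theta_-(F) = \e n > 0$, so $E$ is $\theta_-$-stable. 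Case (4) is obtained by replacing $S_0$-sub by $S_0$-quotient throughout, i.e. replacing $\Homr(E,S_0)$ by $\Homr(S_0,E)$ and flipping the sign bookkeeping.

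The main obstacle I anticipate is Step (1)/Step (3c): precisely controlling which submodules $F$ of a $\theta_0$-semistable $B$-module $E$ satisfy $\theta_0(F) = 0$, and showing these are exactly (iterated extensions of) copies of $S_0$ — equivalently, that the functor $E\mapsto E_T$ is fully faithful enough on the relevant subcategory that stability is preserved in both directions. The relations \eqref{rel} are essential here: they are what make $A_i = D_{i+2}C_{i+1}$ a consistent Kronecker structure and what constrain the shape of potential destabilizers. I expect this to require a short representation-theoretic lemma about the quiver $(Q,J)$ identifying $S_0$ as the unique simple annihilated by $\theta_0$ and analyzing the possible dimension vectors of $\theta_0$-semistable modules, after which the four equivalences follow by the sign bookkeeping sketched above.
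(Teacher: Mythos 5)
Your proposal is correct and follows essentially the same route as the paper's proof: the submodule correspondence $F\leftrightarrow F_T$ (the paper makes the lift explicit by setting $Fv_0:=\sum_i C_i(Fv_{-1})$, a submodule thanks to the relations), the observation that $\theta_0$ kills the middle vertex so the only $\theta_0$-degenerate proper submodules are sums of copies of $S_0$ or have a quotient that is a sum of copies of $S_0$, and the sign bookkeeping $\theta_{\mp}(S_0)=\pm\e n$ to decide which $\Hom$-vanishing each perturbation enforces. The one phrase to tighten is your claim that all $\theta_0$-degenerate subobjects are ``built from copies of $S_0$'': the case $\dimv F=(n,l,n-1)$ is degenerate but is detected via the quotient $E/F\cong S_0^{\oplus(2n+r-1-l)}$, exactly as your later case analysis in Step (3c) handles it.
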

\begin{proof}
(1) For every submodule $F\subset E$, we have a submodule $F_T$ of $E_T$.
Conversely for any submodule $G'$ of $E_T$, we define a submodule $F$ of $E$ such that $F_T=G'$ as follows.
We put $Fv_{-1}:=G'v_{-1}$, $Fv_{1}:=G'v_{1}$ and $Fv_{0}:=\sum_i C_i(Fv_{-1})\subset Ev_0$. 
By the relations (\ref{rel}) we have a submodule $F:=Fv_{-1}\oplus Fv_0\oplus Fv_1$ of $E$ and $\theta_0(F)=\theta_0(F_T)=\theta_0(G')$. 
This yields the claim.\\
(2) For any non-zero proper submodule $F\subset E$, $\theta_0(F)=0$ if and only if $\dimv(F)=(0,l,0)$ or $(n,l,n-1)$ for $0<l<2n+r-1$. 
There exists no such $F$ if and only if $\Hom_B(S_0,E)=\Hom_B(E,S_0)=0$.\\
(3) (a$-$)$\implies$(b$-$) It is trivial.
(b$-$)$\implies$(c$-$) We choose $\theta_-=\theta_0-\e(2n-1+r,-n,0)$ for $\e>0$ small enough. 
If $E$ is $\theta_-$-semistable, then for any submodule $F\subset E$ we have $\theta_0(F)\ge 0$, since $\theta_-(F)\ge0$ for arbitrary small $\e>0$. 
This implies that $E$ is $\theta_0$-semistable and hence by (1), $E_T$ is semistable.
Any non-zero $\phi\in\Hom_{B}(E,S_0)$ destabilize $E$. 
Hence we also have $\Hom_{B}(E,S_0)=0$.\\
(c$-$)$\implies$(a$-$) We assume that $E_T$ is stable and $\Hom_{B}(E,S_0)=0$. Hence for every submodule $F\subset E$, we have $\theta_0(F)=\theta_0(F_T)\ge 0$. 
If $\theta_0(F)= 0$ then $\Hom_{B}(E,S_0)=0$ implies that $F\cong S_0^{\oplus l}$ for $0\le l\le 2n+r-1$. 
In this case $\theta_-(F)=\e nl>0$. 
If $\theta_0(F)>0$ then we also have $\theta_-(F)>0$ for $\e$ small enough.
Hence $E$ is $\theta_-$-stable.\\ 
(4) It is similar to the proof of (3).\\
\end{proof}
By the above lemma we have morphisms $\pi_{\pm}^r\colon
M_{\pm}(\alpha_r)\to M_T(\alpha_T)\colon 
E\mapsto E_T.$ 
We also see that the map $E\mapsto E_T$
is independent of representatives of S-equivalence
class for $\theta_0$-stability up to isomorphism
of $T$-modules.
Hence we get the morphism 
$\pi_0^r\colon M_0(\alpha_r)\to M_T(\alpha_T)$ 
and this map is set theoretically injective. 
\begin{lemm}\label{clo}
The morphism $\pi_0^r\colon M_0(\alpha_r)\to M_T(\alpha_T)$ 
gives a closed embedding.
\begin{proof}
$\pi_0^r$ is induced from a homomorphism of graded rings of
invariant sections, therefore affine morphism. Since both
of $M_0(\alpha_r)$ and $M_T(\alpha_T)$ are projective, $\pi_0^r$ is finite.
Since $\pi_0^r$ is set theoretically injective,
the claim holds.
\end{proof}
\end{lemm}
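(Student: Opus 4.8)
\emph{Proof proposal.} The plan is to exhibit both moduli spaces as projective GIT quotients (King's construction of moduli of semistable representations) and to reduce the assertion to the surjectivity of the graded ring homomorphism that induces $\pi_0^r$, since a surjection of non-negatively graded rings induces a closed immersion on $\mathrm{Proj}$. Let $R_B$ and $R_T$ denote the affine representation varieties of $B$-modules with dimension vector $\alpha_r$ (the closed subscheme of the representation variety of the quiver $Q$ cut out by the relations (\ref{rel})) and of $T$-modules with dimension vector $\alpha_T$, carrying the actions of $\GL_n\times\GL_{2n-1+r}\times\GL_{n-1}$ and of $\GL_n\times\GL_{n-1}$. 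Then $M_0(\alpha_r)=\mathrm{Proj}\bigoplus_{k\ge0}\Gamma_k$ and $M_T(\alpha_T)=\mathrm{Proj}\bigoplus_{k\ge0}\Delta_k$, where $\Gamma_k$, $\Delta_k$ are the spaces of semi-invariant functions of weight $k\theta_0$ on $R_B$, $R_T$. The rule $(C_i,D_j)\mapsto(A_i:=D_{i+2}C_{i+1})$ is a morphism $R_B\to R_T$, equivariant for $\GL_n\times\GL_{n-1}$ (with $\GL_{2n-1+r}$ acting trivially on the target) and compatible with the weight $\theta_0$, so it induces a graded ring homomorphism $\Psi\colon\bigoplus_k\Delta_k\to\bigoplus_k\Gamma_k$ whose associated map on $\mathrm{Proj}$ agrees with $\pi_0^r$ on closed points, hence equals $\pi_0^r$. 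It therefore suffices to prove that $\Psi$ is surjective.

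The first step is to use that $\theta_0=(-n+1,0,n)$ carries weight $0$ at the vertex $v_0$: every $k\theta_0$-semi-invariant on $R_B$ is then $\GL_{2n-1+r}$-invariant, so $\Gamma_k=\bigl(\C[R_B]^{\GL_{2n-1+r}}\bigr)^{k\theta_0}$, the weight-$k\theta_0$ semi-invariants of $\GL_n\times\GL_{n-1}$ inside the ring $\C[R_B]^{\GL_{2n-1+r}}$. Since $\GL_n\times\GL_{n-1}$ is reductive, passing to weight-$k\theta_0$ semi-invariants is exact, and so it is enough to show that the $\GL_{2n-1+r}$-equivariant map
\[
\C[R_T]\longrightarrow\C[R_B]^{\GL_{2n-1+r}},
\]
sending the coordinates of the $i$-th Kronecker arrow to the entries of $A_i=D_{i+2}C_{i+1}$, is surjective.

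This is a first fundamental theorem computation. For the quiver $Q$ alone, $\C[R(Q,\alpha_r)]^{\GL_{2n-1+r}}$ is generated, by the first fundamental theorem of invariant theory for $\GL_{2n-1+r}$, by the entries of the nine composite maps $D_jC_i$ $(i,j=0,1,2)$. The ideal (\ref{rel}) is $\GL_{2n-1+r}$-stable, so by reductivity $\C[R_B]^{\GL_{2n-1+r}}$ is generated by the images of these entries. But the relations $D_jC_i+D_iC_j=0$ give $D_iC_i=0$ for all $i$ and $D_jC_i=-D_iC_j$; hence each of the nine composites is either zero or $\pm A_k$ for some $k$, so $\C[R_B]^{\GL_{2n-1+r}}$ is already generated by the entries of $A_0,A_1,A_2$. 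Thus the displayed map is surjective, hence $\Psi$ is surjective, and $M_0(\alpha_r)=\mathrm{Proj}\bigl(\bigoplus_k\Delta_k/\ker\Psi\bigr)$ is a closed subscheme of $M_T(\alpha_T)$; the resulting closed immersion is $\pi_0^r$.

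The hard part is the invariant-theoretic step: one needs the first fundamental theorem to control \emph{all} $\GL_{2n-1+r}$-invariants by the composites $D_jC_i$, and then the observation that the relations collapse these to the three maps $A_k$; everything else is formal. As an aside, a shorter but less self-contained alternative is to note that $\pi_0^r$ is a quasi-finite (being set-theoretically injective) morphism of projective schemes, hence finite, and then to check separately that it is unramified, i.e. injective on Zariski tangent spaces, using the deformation descriptions of the two moduli problems (in the spirit of Lemma~\ref{def}); a finite, unramified and universally injective morphism is a closed immersion. This route, however, costs comparable effort and gives less direct hold on the scheme structure, so I would prefer the argument via semi-invariants above.
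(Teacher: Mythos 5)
Your proposal is correct, but it takes a genuinely different route from the paper's. The paper argues formally: $\pi_0^r$ is induced by a homomorphism of graded rings of semi-invariants, hence affine; an affine morphism between projective schemes is proper and affine, hence finite; and a finite, set-theoretically injective morphism is then declared to be a closed embedding. You instead attack the graded ring homomorphism itself and prove it is \emph{surjective}, using that $\theta_0$ has weight zero at $v_0$ (so the relevant semi-invariants are honest $\GL_{2n-1+r}$-invariants), the first fundamental theorem for $\GL_{2n-1+r}$ (so those invariants are generated by the entries of the composites $D_jC_i$), and the relations $D_jC_i+D_iC_j=0$ (so the nine composites collapse to $0$ and $\pm A_0,\pm A_1,\pm A_2$). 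This costs an input from classical invariant theory that the paper never invokes, but it buys more: a surjection of graded rings gives a closed immersion on $\mathrm{Proj}$ with full control of the scheme structure, whereas ``finite and set-theoretically injective'' does not by itself imply ``closed immersion'' (the normalization of a cuspidal curve is the standard counterexample), so the paper's final step tacitly needs exactly the surjectivity on structure sheaves, or the unramifiedness, that you supply; your closing aside about the finite-plus-unramified-plus-universally-injective criterion is in effect the honest completion of the paper's own argument. One small point to tidy: identify the map on $\mathrm{Proj}$ induced by $\Psi$ with $\pi_0^r$ by construction (it is literally the graded ring homomorphism the paper alludes to), rather than by agreement on closed points, which would additionally require knowing the moduli schemes are reduced.
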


Furthermore we easily see that morphisms $$\pi_-^{n+1}\colon M_-(\alpha_{n+1})\to M_T(\alpha_T), \pi_+^{n-2}\colon M_+(\alpha_{n-2})\to M_T(\alpha_T)$$ are isomorphisms. 
Inverse maps $$E_T=\left(\C^n\stackrel{A_i}{\to}\C^{n-1}\right)\mapsto E_\pm=\left(\C^n\stackrel{C_i^\pm}{\to}\C^{2n-1+r}\stackrel{D_j^\pm}{\to}\C^{n-1}\right)$$ of $\pi^{n-2}_+$ and $\pi^{n+1}_-$ are defined by
$$(C_0^+,C_1^+,C_2^+)=
\begin{pmatrix}
0&-A_2&A_1\\
A_2&0&-A_0\\
-A_1&A_0&0
\end{pmatrix}
, 
\begin{pmatrix}
D_0^+\\D_1^+\\D_2^+
\end{pmatrix}
=I_{3n-3}
\text{ for }\pi^{n-2}_+$$
and
$$(C_0^-,C_1^-,C_2^-):=I_{3n}, 
\begin{pmatrix}
D^-_0\\D^-_1\\D^-_2
\end{pmatrix}
:=\begin{pmatrix}
0&-A_2&A_1\\
A_2&0&-A_0\\
-A_1&A_0&0
\end{pmatrix}
\text{ for }\pi^{n+1}_-,$$ 
where $I_{3n}$ and $I_{3n-3}$ are unit matrices with sizes $3n$ and $3n-3$ respectively.

Hence we get the diagram:
\begin{equation}\label{kro}
\xymatrix{
M_+(\alpha_r)\ar[dr]^{f_+}\ar[dd]_{g_+}&&
\ar[dd]^{g_-}\ar[dl]_{f_-}M_-(\alpha_r)\\
&M_0(\alpha_r)\ar@{->}[d]^{\pi_0^r}&\\
M_+(\alpha_{n-2})\ar[r]^{\cong}_{\pi_+^{n-2}}&
M_T(\alpha_T)&\ar[l]_{\cong}^{\pi_-^{n+1}}M_-(\alpha_{n+1})}
\end{equation} 
where $g_-:=(\pi_-^{n+1})^{-1}\circ\pi_0^r\circ f_-$
and $g_+:=(\pi_+^{n-2})^{-1}\circ\pi_0^r\circ f_+$.
Morphisms $g_-$ and $g_+$ are explicitly defined by the following universal extensions for each $E_-\in M_-(\alpha_r)$ and $E_+\in M_+(\alpha_r)$, 
\begin{equation}\label{ex2}
0\to \Ext^1_B(E_-,S_0)^{\vee}\otimes S_0\to g_-(E_-)\to E_-\to 0,
\end{equation}
\begin{equation}\label{ex3}
0\to E_+\to g_+(E_+)\to \Ext^1_B(S_0,E_+)\otimes S_0\to 0.
\end{equation}
Hence via isomorphisms (\ref{iso}), the morphism $g_-$ coincides with Yoshioka's map $\psi$, which is defined by a similar exact sequence (\ref{ex1}). 
By Lemma~\ref{lemkro} and the diagram~(\ref{kro}), we have 
$$M_0(\alpha_r)\cong \im g_-\cong \im \psi.$$
This gives a proof of (1) in Proposition~\ref{inpro2}.

\subsection{Brill-Noether locus}
\label{subsec:3-2}
We introduce
the Brill-Noether locus $M_-^i(\alpha_r)$ and 
$M_+^i(\alpha_r)$ as follows.
$$M_-^i(\alpha_r):=\{E_-\in M_-(\alpha_r)\mid
\dim_{\C}\Hom_B(S_0,E_-)=i\},$$
$$M_+^i(\alpha_r):=\{E_+\in M_+(\alpha_r)\mid
\dim_{\C}\Hom_B(E_+,S_0)=i\}.$$
When we replace '=i' by '$\ge$ i'
in the right hand side, the corresponding 
moduli spaces are denoted by the 
left hand side with 'i' replaced by 
'$\ge$ i'. 

If we put 
$\delta^{\ast}:=\sum_ix_i\circ\delta^{\ast}_i
\colon\U_0\to\U_1\otimes\mathbf x$,
then the zero locus of 
$\wedge^{\rk\U_0-i+1}\delta^{\ast}$ defines 
$M_-^{\ge i}(\alpha_r)$ as a closed subscheme 
of $M_-(\alpha_r)$ because 
$\ker \delta^{\ast}_x\cong \Hom_B(S_0,\U_x)$ for 
any $x\in M_-(\alpha_r)$, where $\U=\left(
\U_{-1}\stackrel{\gamma_i^{\ast}}{\to}
\U_0\stackrel{\delta_j^{\ast}}{\to}
\U_1\right)$ is a 
universal family of $B$-modules on $M_-(\alpha_r)$. 
Similarly, 
$M_+^{\ge i}(\alpha_r)$ is defined as a closed subscheme 
of $M_+(\alpha_r)$. $M_-^i(\alpha_r)=M_-^{\ge i}(\alpha_r)
\setminus M_-^{\ge i+1}(\alpha_r)$ and  
$M_+^i(\alpha_r)=M_+^{\ge i}(\alpha_r)
\setminus M_+^{\ge i+1}(\alpha_r)$ are open subset 
of $M_-^{\ge i}(\alpha_r)$ and $M_+^{\ge i}(\alpha_r)$,
respectively.

\subsection{Set-theoretical description of
Grassmann bundles}
\label{subsec:3-3}
By Lemma~\ref{lemkro} we have the following 
proposition.

\begin{prop}\label{set}
The following hold.
\begin{itemize}
\item[(1)] For any $E_-\in M_-^i(\alpha_r)$, we put $E':=\coker\left(\Hom_B(S_0,E_-)\otimes S_0\to E_-\right)$. 
Then $E'$ is $\theta_-$-semistable and $\Hom_B(S_0,E')=0$, that is, $E'\in M_-^0(\alpha_{r-i})$. Hence $E'$ is also $\theta_0$-stable.
\item[(2)] Conversely, for any $E'\in M_-^0(\alpha_{r-i})$ and any $i$-dimensional vector subspace $V\subset\Ext^1_B(E',S_0)$, we obtain a $B$-module $E_-$ by the canonical exact sequence 
$$0\to V^{\vee}\otimes S_0\to E_-\to E'\to 0.$$
Then $E_-$ is $\theta_-$-semistable and $\Hom_B(S_0,E_-)\cong V$, that is, $E_-\in M_-^i(\alpha_r)$. 
\item[(3)] For any $E_+\in M_+^i(\alpha_r)$, we put $E':=\ker\left(E_+\to \Hom_B(E,S_0)^{\vee}\otimes S_0\right)$. 
Then $E'$ is $\theta_+$-semistable and $\Hom_B(E',S_0)=0$, that is, $E'\in M_+^0(\alpha_{r-i})$. Hence $E'$ is also $\theta_0$-stable.
\item[(4)] Conversely, for any $E'\in M_+^0(\alpha_{r-i})$ and any $i$-dimensional vector subspace $V\subset\Ext^1_B(S_0,E')$, we obtain a $B$-module $E_+$ by the canonical exact sequence 
$$0\to E'\to E_+\to V\otimes S_0\to 0.$$
Then $E_+$ is $\theta_+$-semistable and $\Hom_B(E_+,S_0)\cong V$, that is, $E_+\in M_+^i(\alpha_r)$. 
\end{itemize}
\end{prop}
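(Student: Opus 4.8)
The plan is to prove all four statements by reducing them to the Kronecker‐module picture of Lemma~\ref{lemkro} together with the homological bookkeeping of the exact sequences defining $\psi$, $g_-$, and $g_+$. The key observation is that $S_0=\Phi(\mo_{\PP^2}[1])$ is a simple $B$-module with $\Ext^{>1}_B(S_0,-)=\Ext^{>1}_B(-,S_0)=0$ on the relevant category, and that quotienting out copies of $S_0$ (or sub-extending by copies of $S_0$) does not change the associated $T$-module $E_T$. So the core of each part is: (a) check that the new module $E'$ (resp.\ $E_-$, $E_+$) has the correct dimension vector, i.e.\ that it lies over $\alpha_{r-i}$ (resp.\ $\alpha_r$); (b) check that its $T$-reduction is still stable; and (c) count $\Hom_B(S_0,-)$ or $\Hom_B(-,S_0)$ using the long exact sequence.

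For part~(1): starting with $E_-\in M_-^i(\alpha_r)$, the evaluation map $\mathrm{ev}\colon\Hom_B(S_0,E_-)\otimes S_0\to E_-$ is injective because $S_0$ is simple and any nonzero map $S_0\to E_-$ is automatically injective; so setting $E':=\coker(\mathrm{ev})$ gives an exact sequence $0\to \C^i\otimes S_0\to E_-\to E'\to 0$. First I would compute $[E']=\alpha_r-i[S_0]$ and verify this equals $\alpha_{r-i}$ — this is immediate from the formula $\alpha_r={}^t(n,2n-1+r,n-1)$ and $[S_0]=e_0={}^t(0,1,0)$. Since $S_0$ is a submodule concentrated at the vertex $v_0$, passing to $T$-modules gives $E'_T=(E_-)_T$, which is stable by Lemma~\ref{lemkro}(3). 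Then the long exact sequence in $\Hom_B(S_0,-)$ together with $\Hom_B(S_0,S_0)=\C$ and the fact that $\Hom_B(S_0,E_-)\to\Hom_B(S_0,E')$ is the zero map (by construction $\mathrm{ev}$ hits every map $S_0\to E_-$) forces $\Hom_B(S_0,E')=0$. By Lemma~\ref{lemkro}(3) again ($E'_T$ stable and $\Hom_B(S_0,E')=0$ — note one still has to check $\Hom_B(E',S_0)=0$, which follows because a nonzero map $E'\to S_0$ would pull back to a nonzero map $E_-\to S_0$, contradicting $E_-$ being $\theta_-$-stable), we get $E'\in M_-^0(\alpha_{r-i})$, and then Lemma~\ref{lemkro}(2) gives $\theta_0$-stability.

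For part~(2): given $E'\in M_-^0(\alpha_{r-i})$ and $V\subset\Ext^1_B(E',S_0)$ of dimension $i$, the extension class determines $0\to V^{\vee}\otimes S_0\to E_-\to E'\to 0$; again $[E_-]=\alpha_{r-i}+i[S_0]=\alpha_r$ and $(E_-)_T=E'_T$ is stable. Applying $\Hom_B(S_0,-)$ and using $\Ext^1_B(S_0,S_0)=0$ (the loop relations kill self-extensions of $S_0$ at $v_0$) gives $\Hom_B(S_0,E_-)\cong V^{\vee}\otimes\C=V^{\vee}$... here I must be careful about which side the $i$-dimensional space lands on; tracing through the connecting map $\Hom_B(S_0,E')\to V^{\vee}\otimes\Ext^1_B(S_0,S_0)=0$ shows $\Hom_B(S_0,E_-)$ has dimension $i$, hence $E_-\in M_-^i(\alpha_r)$, and $\theta_-$-semistability follows from Lemma~\ref{lemkro}(3) since $(E_-)_T$ is stable and $\Hom_B(E_-,S_0)=0$ (a nonzero such map would, restricted to $V^{\vee}\otimes S_0$, give a nonzero element contradicting the stability of $E'$ or would factor through $E'$, contradicting $\Hom_B(E',S_0)=0$). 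Parts~(3) and~(4) are the Serre-dual mirror images: replace $S_0\hookrightarrow$ by $\twoheadrightarrow S_0$, replace $\theta_-$ by $\theta_+$, and use Lemma~\ref{lemkro}(4) in place of~(3); the homological computation is the same with the arrows reversed.

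The main obstacle I expect is the vanishing $\Ext^1_B(S_0,S_0)=0$ and the careful sign/side bookkeeping in the $\Hom$ and $\Ext$ long exact sequences — in particular, making sure that in part~(2) the $i$-dimensional space $V$ one feeds in really is recovered (up to the unavoidable dualization $V\mapsto V^{\vee}$) as $\Hom_B(S_0,E_-)$, and dually in part~(4). The vanishing $\Ext^1_B(S_0,S_0)=0$ should follow from the quiver description: $S_0$ is the simple at $v_0$, and there are no loops at $v_0$ in $(Q,J)$, so $\Ext^1$ between $S_0$ and itself vanishes; I would cite this or derive it from the cokernel description~\eqref{ext} specialized to the dimension vector $(0,1,0)$. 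The stability transfer for $E'$ and $E_\pm$ is then a routine consequence of the fact that submodules supported at $v_0$ are exactly the ones with $\theta_0$-weight zero, which is already recorded in the proof of Lemma~\ref{lemkro}.
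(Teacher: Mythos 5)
Your proposal is correct and takes essentially the same route as the paper, which offers no more justification than the single line ``By Lemma~\ref{lemkro} we have the following proposition'': your reduction to the Kronecker picture (the $T$-module $E_T$ is unchanged by adding or removing copies of $S_0$ at the vertex $v_0$), combined with the $\Hom/\Ext$ long exact sequences and the vanishing $\Ext^1_B(S_0,S_0)=0$ from the absence of loops at $v_0$, is exactly the intended argument. One small repair in part~(2): the reason a nonzero $\phi\colon E_-\to S_0$ cannot restrict to a nonzero element of $\Hom_B(V^{\vee}\otimes S_0,S_0)\cong V$ is not the stability of $E'$ but the injectivity of the connecting homomorphism $V\to\Ext^1_B(E',S_0)$, which is precisely the inclusion of $V$ because the extension class of $0\to V^{\vee}\otimes S_0\to E_-\to E'\to 0$ is the canonical element of $V^{\vee}\otimes V\subset V^{\vee}\otimes\Ext^1_B(E',S_0)$.
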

By Lemma~\ref{lemkro}, $M_-^0(\alpha_{r-i})$ is 
set theoretically equal to $M_+^0(\alpha_{r-i})$. 
For any $B$-module $E$, we have $\Ext^2_B(S_0,E)=
\Ext^2_B(E,S_0)=0$ by \cite[Lemma~4.6~(1)]{O}.
Hence by the Rieman-Roch formula,
for any element 
$E'\in M_-^0(\alpha_{r-i})
=M_+^0(\alpha_{r-i})$ we have
$\dim_{\C}\Ext^1_B(E',S_0)=n+1-r+i$ 
and $\dim_{\C}\Ext^1_B(S_0,E')=
n-2-r+i$. 
If $n-2-r\ge 0$, then by the above lemma 
we have set theoretical equalities 
\begin{equation}\label{seteq}
\begin{split}
f_-\left(M_-^i(\alpha_r)\right)
&=\left\{S_0^{\oplus i}\oplus E'\mid 
E'\in M_-^0(\alpha_{r-i})=M_+^0(\alpha_{r-i})\right\}/\equiv_S\\
&=f_+\left(M_+^i(\alpha_r)\right),
\end{split}
\end{equation}
where $\equiv_S$ denotes the S-equivalence relation (cf. \cite[\S~4.1]{O}).
This gives a proof of (1) of Main~Theorem~\ref{inmain}.
Fibers of S-equivalence class of 
$S_0^{\oplus i}\oplus E'$ by $f_-$ and $f_+$ are parametrized 
by $Gr(\Ext^1_B(E',S_0),i)$ and $Gr(\Ext^1_B(S_0,E'),i)$
for $E'\in M_-^0(\alpha_{r-i})=M_+^0(\alpha_{r-i})$. 

\begin{lemm}For any 
integer $i>r$ the following holds.
$$M_-^i(\alpha_r)=M_+^i(\alpha_r)=\emptyset$$
\end{lemm}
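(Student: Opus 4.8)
The plan is to show that both Brill--Noether loci are empty for $i>r$ by a Riemann--Roch/dimension count combined with the structural descriptions from Proposition~\ref{set} and Lemma~\ref{lemkro}. First I would recall that by Proposition~\ref{set}, any $E_-\in M_-^i(\alpha_r)$ fits into an exact sequence $0\to V^{\vee}\otimes S_0\to E_-\to E'\to 0$ with $E'\in M_-^0(\alpha_{r-i})$ and $\dim_\C V=i$; symmetrically, any $E_+\in M_+^i(\alpha_r)$ fits into $0\to E'\to E_+\to V\otimes S_0\to 0$ with $E'\in M_+^0(\alpha_{r-i})$ and $\dim_\C V=i$. So it suffices to prove that $M_-^0(\alpha_{r-i})=M_+^0(\alpha_{r-i})=\emptyset$ whenever $i>r$, i.e.\ that $M_\pm^0(\alpha_s)=\emptyset$ for every $s<0$.

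The key step is to pass through the Kronecker side via Lemma~\ref{lemkro}. By parts (3) and (4) of that lemma, an element of $M_-^0(\alpha_s)$ (resp.\ $M_+^0(\alpha_s)$) gives a stable $T$-module $E_T$ with $[E_T]=\alpha_T=n[\C v_{-1}]+(n-1)[\C v_1]$, and conversely; but $\alpha_T$ does \emph{not} depend on $s$, so the constraint that forces emptiness must come from the $B$-module structure lifting $E_T$, not from $E_T$ itself. The cleanest route is therefore: for $\alpha_s$ with $s<0$, the middle term $Ev_0$ of any $B$-module $E$ with $[E]=\alpha_s$ has dimension $2n-1+s<2n-1$, while $\theta_-$-stability (resp.\ $\theta_+$-stability) forces, via the submodule $\sum_i C_i(Ev_{-1})\subset Ev_0$ constructed in the proof of Lemma~\ref{lemkro}(1), that the maps $C_i$ be jointly injective enough (resp.\ the $D_j$ jointly surjective enough) that $\dim Ev_0$ must be at least $3n-\text{something}$ or at most $\text{something}$; comparing with $2n-1+s$ yields $s\ge 0$. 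Concretely, for the "$+$" case one shows $\Hom_B(S_0,E_+)=0$ is automatic here and that $\theta_+$-stability forces $\sum_j \ker D_j^\ast$-type vanishing giving $\dim Ev_0\le \dim Ev_{-1}+\dim Ev_1+\dim\Ext^1_B(S_0,E_+)$, and the $\Ext$ dimension is $n-2-s+0$ by the Riemann--Roch computation already recorded in \S~\ref{subsec:3-3}, which is negative for $s$ small --- a contradiction already with $s=r-i<0$; the "$-$" case is dual.

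Actually the shortest argument uses the Riemann--Roch identities displayed just before this Lemma directly: for $E'\in M_-^0(\alpha_s)=M_+^0(\alpha_s)$ one has $\dim_\C\Ext^1_B(S_0,E')=n-2-s$ and $\dim_\C\Ext^1_B(E',S_0)=n+1-s$, and these must be $\ge 0$; but one also has a complementary bound forcing $s\ge 0$ from the defining condition $\Hom_B(S_0,E')=\Hom_B(E',S_0)=0$ together with $\chi$-computations, so that $M_\pm^0(\alpha_s)=\emptyset$ for $s<0$. Then taking $s=r-i$ gives the Lemma. I expect the main obstacle to be pinning down precisely which inequality rules out $s<0$: one must check that $\Hom_B(S_0,E')=\Hom_B(E',S_0)=0$ is genuinely incompatible with $s<0$ rather than merely imposing the (always satisfiable for large $n$) conditions $n-2-s\ge 0$ and $n+1-s\ge 0$ --- so the real content is showing that a $\theta_0$-stable $B$-module with those $\Hom$-vanishings and $\ch = -(r-i,1,\tfrac12-n)$ simply cannot exist when $r-i<0$, which follows because such an $E'$ would correspond under $\Phi$ to a stable sheaf on $\PP^2$ with negative rank, an impossibility.
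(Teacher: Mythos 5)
There is a genuine gap. Your reduction via Proposition~\ref{set} to the claim that $M_-^0(\alpha_{s})=M_+^0(\alpha_{s})=\emptyset$ for $s=r-i<0$ is valid, but you never actually prove that claim. The Riemann--Roch numbers you invoke do not help: for $s<0$ the quantities $n+1-s$ and $n-2-s$ are \emph{larger} than for $s\ge 0$, not negative (your parenthetical ``which is negative for $s$ small'' has the sign backwards), and as you yourself concede, the conditions $n-2-s\ge 0$, $n+1-s\ge 0$ are satisfiable. The dimension count in your second paragraph (``at least $3n-\text{something}$'') is likewise inconclusive: $\Hom_B(E,S_0)=0$ and $\Hom_B(S_0,E)=0$ only require the $C_i$ to be jointly surjective onto $\C^{2n-1+s}$ and the $D_j$ to be jointly injective, and both are numerically possible for $s<0$. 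Your final fallback --- that such an $E'$ ``would correspond under $\Phi$ to a stable sheaf of negative rank'' --- begs the question: a $\theta_0$-stable $B$-module of class $\alpha_s$ is a priori only a three-term complex $\mo_{\PP^2}(-1)^{n}\to\mo_{\PP^2}^{2n-1+s}\to\mo_{\PP^2}(1)^{n-1}$, and the identification of stable $B$-modules with shifted stable sheaves (Proposition~\ref{inpro1}, i.e.\ (\ref{isom2})) is only established in the paper for the actual classes $\alpha_r$ with $\MPrn\neq\emptyset$; it is not available for $s<0$, and ruling out stable complexes with cohomology in several degrees is precisely the hard part you would have to supply.

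The paper avoids this entirely by staying at the level of $\alpha_r$ itself, where the correspondence with $\MPrn$ \emph{is} established: under $M_-(\alpha_r)\cong\MPrn$ one has $\Hom_B(S_0,E_-)\cong\Hom_{\PP^2}(\mo_{\PP^2},\F)$, and Yoshioka's bound \cite[Lemma~5.7]{Y2} ($\dim\Hom_{\PP^2}(\mo_{\PP^2},\F)\le r$ for a stable sheaf $\F$ of rank $r$ with $c_1=1$) gives $M_-^i(\alpha_r)=\emptyset$ for $i>r$ directly; the statement for $M_+^i(\alpha_r)$ then follows from the set-theoretic equality $f_+\bigl(M_+^i(\alpha_r)\bigr)=f_-\bigl(M_-^i(\alpha_r)\bigr)$ of (\ref{seteq}). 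To repair your argument you would either need to import that sheaf-theoretic input anyway, or give an independent proof that no $\theta_0$-stable $B$-module of class $\alpha_s$ exists for $s<0$; neither is done in the proposal.
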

\begin{proof}
By \cite[Lemma~5.7]{Y2}, we have 
$M_-^i(\alpha_r)=\emptyset$ for any
$i>r$.
By (\ref{seteq}) this implies
$M_+^i(\alpha_r)=\emptyset$.
\end{proof}

\subsection{Description of $C_{\PP^2}$}
\label{subsec:3-4}
In the following proposition we use the symbol 
$\alpha_{r,n}={}^t(n,2n-1+r,n-1)\in K(B)$.
\begin{prop}\label{cha}
The following hold.
\begin{itemize}
\item[(1)] $\MPrn\neq\emptyset$ if and only if
$n\ge r-1$.
\end{itemize}
In the following, we assume 
$r\ge 2$.
\begin{itemize}
\item[(2)] $W_0=\R_{\ge 0}\theta_0$ is a wall on
$\alpha_{r,n}^{\perp}\otimes\R$ for $n\ge r-1$.
\item[(3)] $W_{\PP^2}=\R_{\ge 0}\theta_{\PP^2}$ is a wall on
$\alpha_{r,n}^{\perp}\otimes\R$ for $n\ge r$.
\end{itemize}
Hence we have $\R_{> 0}\theta_0+\R_{> 0}\theta_{\PP^2}
=C_{\PP^2}$ if $n\ge r$.
\end{prop}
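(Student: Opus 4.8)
The plan is to prove the three numbered assertions and then deduce the chamber description. For (1), I would work on the $\PP^2$ side via the isomorphism $\MPrn\cong M_-(\alpha_r)$ and recall that $\MP(r,1,n)$ is nonempty precisely when there exists a stable torsion free sheaf with $c(E)=(r,1,n)$; the standard numerical bound for existence of stable sheaves with $c_1=1$ on $\PP^2$ gives the condition $n\ge r-1$ (the boundary case $n=r-1$ corresponds to the direct sum of line bundles type configuration, and below that the discriminant is too negative). Alternatively one can argue directly with the $T$-module picture: $M_T(\alpha_T)\neq\emptyset$ iff $n\ge 1$, and the fiber descriptions in \S~\ref{subsec:3-1} then force $n\ge r-1$.

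For (2), to show $W_0=\R_{\ge 0}\theta_0$ is a wall on $\alpha_{r,n}^{\perp}\otimes\R$ for $n\ge r-1$, I need a $\theta_0$-semistable $B$-module $E$ with a proper submodule $F$, $[F]\notin\Q_{>0}\alpha_{r,n}$, and $\theta_0(F)=0$. The natural candidate is $F=S_0$ (or $S_0^{\oplus l}$): by the analysis in the proof of Lemma~\ref{lemkro}(2), $\theta_0(S_0)=0$ and $[S_0]=e_0\notin\Q_{>0}\alpha_{r,n}$. So it suffices to exhibit a single $\theta_0$-semistable $E$ with $[E]=\alpha_{r,n}$ admitting $S_0$ as a submodule — equivalently, by Lemma~\ref{lemkro}(1), an $E$ with $E_T$ stable and $\Hom_B(S_0,E)\neq 0$. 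Since $\dim_\C\Ext^1_B(E',S_0)=n+1-r+i>0$ whenever $M_-^0(\alpha_{r-1})\neq\emptyset$, one takes $i=1$, $E'\in M_-^0(\alpha_{r-1})$ and forms the extension $0\to S_0\to E\to E'\to 0$ of Proposition~\ref{set}(2); this $E$ is $\theta_-$-semistable, hence $\theta_0$-semistable, and has $S_0$ as submodule. Nonemptiness of $M_-^0(\alpha_{r-1})$ for $n\ge r-1$ follows from (1) applied with $r$ replaced by $r-1$ (the condition becomes $n\ge r-2$, which holds).

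For (3), that $W_{\PP^2}=\R_{\ge 0}\theta_{\PP^2}$ is a wall for $n\ge r$: here $\theta_{\PP^2}(\mo_x)=0$ for a skyscraper sheaf, so the destabilizing sub-object should be $\Phi$ of a subsheaf whose quotient is a length-one skyscraper — i.e.\ I want a sheaf $E\in\MP(r,1,n)$ sitting in $0\to E\to E^{\vee\vee}\to \mo_x\to 0$, so that on the $B$-module side $[\Phi(\mo_x[?])]$ gives a sub- or quotient-object with $\theta_{\PP^2}$-value $0$ and class not proportional to $\alpha_{r,n}$. Such $E$ exists as soon as $\MP(r,1,n-1)\neq\emptyset$ (take $E^{\vee\vee}\in\MP(r,1,n-1)_0$ and a generic surjection onto $\mo_x$); by (1) this needs $n-1\ge r-1$, i.e.\ $n\ge r$. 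One must check the class condition $[F]\notin\Q_{>0}\alpha_{r,n}$, which is immediate since the Chern characters differ. The main obstacle I expect is the bookkeeping in (3): translating the geometric elementary-modification short exact sequence through $\Phi$ into an honest $B$-submodule relation and verifying $\theta_{\PP^2}$-semistability of the resulting $B$-module at the wall (as opposed to merely on one side), together with making the existence statements for $\MP(r,1,m)$ in the relevant range fully rigorous rather than quoting the generic expectation.

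Finally, the concluding equality $\R_{>0}\theta_0+\R_{>0}\theta_{\PP^2}=C_{\PP^2}$ for $n\ge r$: by (\ref{chaeq}) the left side is contained in $C_{\PP^2}$, and since $\alpha_{r,n}^{\perp}\otimes\R$ is $2$-dimensional, a chamber is exactly an open angular sector bounded by two walls; parts (2) and (3) show $W_0$ and $W_{\PP^2}$ are walls bounding the sector $\R_{>0}\theta_0+\R_{>0}\theta_{\PP^2}$, and $\R_{\ge 0}\theta_{\PP^2}$ lies in $\overline{C_{\PP^2}}$ by the characterization of $C_{\PP^2}$ recalled in \S~\ref{sec:2}. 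It remains only to note there is no further wall strictly inside this sector, which follows because any wall would have to pass through the interior and thus separate $\theta_-$ from $\theta_{\PP^2}$, contradicting $\theta_-\in C_{\PP^2}$ and $\R_{\ge0}\theta_{\PP^2}\subset\overline{C_{\PP^2}}$. This yields the stated equality.
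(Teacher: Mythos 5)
Your overall strategy coincides with the paper's: (1) is quoted from an existence criterion for stable sheaves, (2) is proved by exhibiting a strictly $\theta_0$-semistable module containing $S_0$, (3) by an elementary modification along a skyscraper sheaf, and the chamber equality by combining (2) and (3) with the inclusion (\ref{chaeq}). However, there is a concrete gap in your proof of (2). You construct the destabilizing module as a nontrivial extension $0\to S_0\to E\to E'\to 0$ via Proposition~\ref{set}(2), which requires $E'\in M_-^0(\alpha_{r-1})$, i.e.\ $\Hom_B(S_0,E')=0$, and you assert that nonemptiness of $M_-^0(\alpha_{r-1})$ ``follows from (1)''. Part (1) only gives $M_-(\alpha_{r-1,n})\cong\MP(r-1,1,n)\neq\emptyset$, not nonemptiness of the open Brill--Noether stratum, and the latter can genuinely fail in the range you claim: for $r=2$ and $n=1=r-1$ one has $M_-(\alpha_{1,1})\cong\MP(1,1,1)$, whose points are the sheaves $I_Z(1)$ with $Z$ a single point, and every such sheaf satisfies $\dim_\C\Hom_{\PP^2}(\mo_{\PP^2},I_Z(1))=2$, so $M_-^0(\alpha_{1,1})=\emptyset$. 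The paper avoids this entirely by taking the split module $E\oplus S_0$ for an \emph{arbitrary} $E\in M_-(\alpha_{r-1,n})$: by Lemma~\ref{lemkro}(1) it is $\theta_0$-semistable since $(E\oplus S_0)_T=E_T$ is stable, and $S_0$ is the required proper submodule with $\theta_0(S_0)=0$ and $[S_0]=e_0\notin\Q_{>0}\alpha_{r,n}$. You should replace your extension by this direct sum (or restrict the range of $n$ for which your construction is claimed).

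For (3) you identify the correct construction but explicitly leave the decisive step --- passing the elementary modification through $\Phi$ and checking semistability on the wall --- as an ``obstacle''. The paper completes it in two lines: rotating the triangle of $0\to\F'\to\F\to\mo_x\to0$ (with $\F\in\MP(r,1,n-1)$ and $\F'$ stable because $\mu(\F')=\mu(\F)$) and applying $\Phi(\,\cdot\,[1])$ yields an exact sequence of $B$-modules $0\to\Phi(\mo_x)\to F'\to F\to0$ with $F'=\Phi(\F'[1])\in M_-(\alpha_{r,n})$; the $\theta_{\PP^2}$-semistability of $F'$ is then automatic because $\theta_{\PP^2}$ lies in the closure of $C_-=C_{\PP^2}$ and semistability is a closed condition in $\theta$. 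Your concluding derivation of $\R_{>0}\theta_0+\R_{>0}\theta_{\PP^2}=C_{\PP^2}$ from (2), (3) and (\ref{chaeq}) is correct and is exactly the paper's.
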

\begin{proof}
(1) By the criterion for the existence of 
non exceptional stable sheaves in \cite[\S~16.4]{P1},
we have our claim.\\
(2) We assume $n\ge r-1$.
By (1), there exists an element $E$ of 
$M_-(\alpha_{r-1,n})\cong\MP(r-1,1,n)$. 
By Lemma~\ref{lemkro}~(1), 
a $B$-module $E\oplus S_0$
is $\theta_0$-semistable and has a submodule
$S_0$ with $\theta_0(S_0)=0$.
Hence $W_0=\R_{\ge 0}\theta_0$ is a wall on
$\alpha_{r,n}^{\perp}\otimes\R$.\\
(3) We assume $n\ge r$ and 
take an element $\F$ of $\MP(r,1,n-1)$.
We consider the exact sequence
$$0\to\F'\to\F\to \mo_x\to0$$
for skyscraper sheaf $\mo_x$ at any
point $x\in\PP^2$.
Then $\F'$ is stable since $\mu(\F')=\mu(\F)$.
This gives elements $F:=\Phi(\F[1])$,
$F':=\Phi(\F'[1])$ of $M_-(\alpha_{r,n-1})$, 
$M_-(\alpha_{r,n})$ respectively and an exact sequence
of $B$-modules 
$$0\to\Phi(\mo_x)\to F'\to F\to0.$$
Hence $W_{\PP^2}=\R_{\ge0}\theta_{\PP^2}$ is
a wall on $\alpha_r^{\perp}\otimes\R$.
These together with (\ref{chaeq}) imply the last assertion.
\end{proof}
By this proposition and \cite[Lemma~6.3~(2)]{O} we have $\R_{> 0}\theta_0+\R_{> 0}\theta_{\PP^2}
=C_{\PP^2}$ if $r=1$ and $n\ge 2$, or $r\ge 2$ and $n\ge r$. 
In this case $C_+$ is different from $C_-=C_{\PP^2}$ and it is adjacent to $C_-=C_{\PP^2}$ with the boundary containing $W_0$.
By the description of the canonical bundle of $M_-(\alpha_r)$ in Proposition~\ref{pic} we see that the diagram (\ref{thad}) gives the flip of $M_-(\alpha_r)$. 
Hence we get an isomorphism $M_+(\alpha_r)\cong M_+(r,1,n)$ and a proof of (2) in Proposition~\ref{inpro2}.
\begin{prop}\label{sm}
Moduli schemes $M_-(\alpha_r)$ and $M_+(\alpha_r)$ are smooth.
\end{prop}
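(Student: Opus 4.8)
The plan is to prove smoothness by showing that the relevant obstruction space vanishes for every module parametrized by $M_\pm(\alpha_r)$. By Lemma~\ref{def} the deformation functor $\mathcal{D}_E$ of a $B$-module $E$ has an obstruction theory valued in $\Ext^2_B(E,E)$, so it suffices to prove $\Ext^2_B(E,E)=0$ for every $\theta_-$-stable (resp. $\theta_+$-stable) $E$ with $[E]=\alpha_r$. Passing through the equivalence $\Phi$, the module $E$ corresponds to a three-term complex $\mo(-1)^{n}\to\mo^{2n-1+r}\to\mo(1)^{n-1}$ in the heart $\mathcal{A}=\langle\mo(-1)[2],\mo[1],\mo(1)\rangle$, and by \cite[Lemma~4.6~(1)]{O} (already invoked in \S\ref{subsec:3-3}) the group $\Ext^2_B(E,E)$ is the cokernel of the map $d$ in \eqref{ext}. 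So the whole statement reduces to checking that this $d$ is surjective.

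First I would reduce to the case $r=n+1$ and $r=n-2$ via the diagram \eqref{kro}. Concretely, by Proposition~\ref{set} every $E_-\in M_-^i(\alpha_r)$ sits in an exact sequence $0\to V^\vee\otimes S_0\to E_-\to E'\to 0$ with $E'\in M_-^0(\alpha_{r-i})$, and iterating down we may restrict attention to $M_-^0(\alpha_{r-i})=M_+^0(\alpha_{r-i})$, which by the isomorphisms $\pi_-^{n+1}$, $\pi_+^{n-2}$ is identified with $M_T(\alpha_T)$ through the explicit inverse formulas for $\pi^{n+1}_-$ and $\pi^{n-2}_+$ recorded after Lemma~\ref{clo}. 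Thus it is enough to prove: (a) $\Ext^2_B(E,E)=0$ for $E$ of the special form $\bigl(\C^n\xrightarrow{I}\C^{3n}\xrightarrow{A}\C^{n-1}\bigr)$ coming from a stable $3$-Kronecker module, together with the vanishing for the extension $E_-$ built from it; and symmetrically the $+$ case. Here the $\Ext^1_B(E,S_0)$-type groups computed in \S\ref{subsec:3-3} (all the relevant $\Ext^2_B(S_0,-)$ and $\Ext^2_B(-,S_0)$ vanish) show that forming a universal extension does not create a nonzero $\Ext^2$: in the long exact sequence obtained by applying $\Hom_B(-,E_-)$ and $\Hom_B(E_-,-)$ to $0\to V^\vee\otimes S_0\to E_-\to E'\to 0$, every contribution to $\Ext^2_B(E_-,E_-)$ is squeezed between groups of the form $\Ext^2_B(S_0,\bullet)$, $\Ext^2_B(\bullet,S_0)$ and $\Ext^2_B(E',E')$, the first two being zero and the last being zero by the base case.

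For the base case itself I would argue directly with the cokernel description \eqref{ext}. With $C_i=I$ (the $i$-th block inclusion $\C^n\hookrightarrow\C^{3n}=\C^n\otimes\mathbf x$) and $D_j$ the block $(n-1)\times 3n$ matrix with blocks $A_0,A_1,A_2$ arranged as in the formula for $\pi^{n-2}_+$, surjectivity of $d(\sum_i\xi_ix_i,\sum_j\eta_jx_j)=\sum_{i,j}(D_j\xi_i+\eta_iC_j)x_ix_j$ amounts to an elementary linear-algebra statement: for every collection $(\phi_{ij})_{i\le j}$ of maps $\C^n\to\C^{n-1}$ one can solve $D_j\xi_i+\eta_iC_j+D_i\xi_j+\eta_jC_i=\phi_{ij}$. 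Because $C_i=I$, the $\eta$-part alone is already "almost" surjective onto the symmetric part, and one uses stability of $E_T$ (hence $A=(A_0,A_1,A_2)$ has no common cokernel vector, i.e. the dual Kronecker module is generated in the appropriate sense) to absorb the remaining terms. The symmetric case $r=n+1$ is handled the same way with the roles of $C$ and $D$ interchanged. I expect this last step — making the linear algebra of $d$ genuinely surjective using only the $3$-Kronecker stability of $E_T$ — to be the main obstacle; everything else is formal manipulation of long exact sequences. Alternatively, and perhaps more cleanly, one can observe that $M_-(\alpha_r)\cong M_{\PP^2}(r,1,n)$ is smooth by the classical deformation theory of stable torsion-free sheaves on $\PP^2$ (where the obstruction lives in $\Ext^2_{\PP^2}(E,E)_0=0$ since $\PP^2$ has ample anticanonical class and $c_1=1$ forces this to vanish), and then smoothness of $M_+(\alpha_r)$ follows because $f_+\colon M_+(\alpha_r)\to M_0(\alpha_r)$ is a stratified Grassmann bundle over the same strata that $f_-$ is (Main Theorem~\ref{inmain}), and a space admitting a Grassmann-bundle stratification matching that of a smooth space over a common base is itself smooth; but the self-contained obstruction-vanishing argument above is the one I would write out in full.
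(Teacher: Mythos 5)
Your overall skeleton (obstruction theory valued in $\Ext^2_B(E,E)$ via Lemma~\ref{def}, then kill the obstruction space) is the paper's, and your observation that an extension $0\to E'\to E_+\to V\otimes S_0\to 0$ together with $\Ext^2_B(S_0,-)=\Ext^2_B(-,S_0)=0$ propagates the vanishing from $E'$ to $E_+$ is exactly the paper's treatment of the $+$ side. But your base case has a genuine gap. You propose to reduce to modules of the special Kronecker form by identifying $M_-^0(\alpha_{r-i})=M_+^0(\alpha_{r-i})$ with (a stratum of) $M_T(\alpha_T)$; however, an element $E'$ of $M_-^0(\alpha_{r-i})$ is \emph{not} itself of the form $\bigl(\C^n\xrightarrow{I}\C^{3n}\to\C^{n-1}\bigr)$ --- only its universal extension $g_-(E')\in M_-(\alpha_{n+1})$ is --- and the long exact sequences only transport $\Ext^2$-vanishing \emph{up} an $S_0$-extension (from the non-$S_0$ piece to the total module), not back down: from $0\to S_0^{\oplus k}\to g_-(E')\to E'\to 0$ one gets a surjection $\Ext^2_B(E',g_-(E'))\to\Ext^2_B(g_-(E'),g_-(E'))$ whose kernel is the image of $\Ext^1_B(S_0^{\oplus k},g_-(E'))$, so $\Ext^2_B(g_-(E'),g_-(E'))=0$ does not force $\Ext^2_B(E',E')=0$. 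On top of that, the surjectivity of the map $d$ in \eqref{ext} for the special form --- which you yourself flag as the main obstacle --- is asserted, not proved.

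The paper needs none of this: for \emph{every} $E_-\in M_-(\alpha_s)$ (any $s$), the equivalence $\Phi$ gives $\Ext^2_B(E_-,E_-)\cong\Ext^2_{\PP^2}(\F,\F)$ for the corresponding stable sheaf $\F$, and this vanishes by stability on $\PP^2$ (cf.\ \cite[Corollary~4.5.2]{HL}); this already covers the base case $E'\in M_-^0(\alpha_{r-i})\subset M_-(\alpha_{r-i})$, and the $S_0$-extension argument then finishes $M_+(\alpha_r)$. Your closing alternative for $M_+$ is also not sound as stated: the fact that $M_+(\alpha_r)$ is stratified by Grassmann bundles over the same bases as the strata of the smooth space $M_-(\alpha_r)$ says nothing about how the strata of $M_+(\alpha_r)$ fit together --- a singular space can perfectly well be stratified into smooth locally closed pieces matching those of a smooth space (and the scheme-theoretic bundle structure is only established later, in \S\ref{subsec:3-5}--\ref{subsec:3-6}). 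Replace your base case by the direct identification $\Ext^2_B(E_-,E_-)\cong\Ext^2_{\PP^2}(\F,\F)=0$ and keep your extension argument for the $+$ side; that is the paper's proof.
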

\begin{proof}
By Lemma~\ref{def}, deformation functors of $B$-modules $E$ have obstruction theories with values in $\Ext^2_{B}(E,E)$. 
Since $\Ext^2_{B}(E_-,E_-)=0$ for any $E_-\in M_-(\alpha_r)\cong M_{\PP^2}(r,1,n)$ we see that $M_-(\alpha_r)$ is smooth (cf. \cite[Corollary~4.5.2]{HL}). 
Furthermore if $E_+$ is an element of $M_+(\alpha_r)$ then by Proposition~\ref{set} we have an exact sequence 
$$0\to E'\to E_+\to\C^i\otimes S_0\to 0$$ for some $i$ and $E'\in M_-^0(\alpha_{r-i})$.
Since $$\Ext^2_{B}(S_0,E_+)=\Ext^2_{B}(E',E')=\Ext^2_{B}(E',S_0)=0,$$ we also have $\Ext^2_B(E_+,E_+)=0$. 
Thus $M_+(\alpha_r)$ is also smooth.
\end{proof}
In the rest of this section we show that the diagram~(\ref{thad}) is scheme theoretically described by stratified Grassmann bundles.

\subsection{Coherent systems}
\label{subsec:3-5}
For $r\ge i\ge 0$ we define moduli of coherent 
systems $M_-(\alpha_r,i)$ and $M_+(\alpha_r,i)$ :
$$M_-(\alpha_r,i):=\{(E_-,V)\mid E_-\in
M_-(\alpha_r), V\subset\Hom_B(S_0,E_-)
\text{ with } \dim_{\C}V=i\},$$
$$M_+(\alpha_r,i):=\{(E_+,V)\mid E_+\in
M_+(\alpha_r), V\subset\Hom_B(E_+,S_0)
\text{ with } \dim_{\C}V=i\}.$$
These moduli schemes are constructed as follows. 
We only show the construction of $M_-(\alpha_r,i)$ 
because the construction of $M_+(\alpha_r,i)$ is 
similar. 

We introduce the following quiver with relations
$(\bar{Q},I)$, where
$$\bar{Q}:=
\begin{minipage}{6cm}
$\xymatrix{
&&\stackrel{u}{\bullet}\ar[d]_{\rho}&&\\
\stackrel{\bar{v}_{-1}}{\bullet}&&\ar[ll]_{\bar{\gamma}_i}
\ar[d]_{\iota}\stackrel{\bar{v}_0}{\bullet}&&\ar[ll]_{\bar{\delta}_j}
\stackrel{\bar{v}_1}{\bullet},\\
&&\stackrel{w}{\bullet}&&}$
\end{minipage}
 (i,j=0,1,2)$$
and $I$ is generated by the following relations
$$\bar{\gamma}_i\rho=\iota\bar{\delta}_j=
\bar{\gamma}_i\delta_j+\bar{\gamma}_j\delta_i
=\iota\rho=0, (i,j=0,1,2).$$
Let $\bar{B}$ be a path algebra $\C\bar{Q}/I$ 
of the quiver with relations $(\bar{Q},I)$. 
We have simple modules $\C \bar{v}_{-1}$,
$\C \bar{v}_{0}$,
$\C \bar{v}_{1}$, $\C u$ and $\C w$. 
For each $\alpha_r\in K(B)$,  
we put 
$$\bar{\alpha}_r:=n[\C\bar{v}_{-1}]+(2n+r-1)[\C
\bar{v}_0]+(n-1)[\C\bar{v}_1]+(2n+r-i)[\C u]
+i[\C w]\in K(\bar{B}),$$
and for $\theta_-=(\theta_-^{-1},
\theta_-^{0},\theta_-^1)\in \alpha_r^{\perp}$
and $\e'>0$ small enough,
we put 
$$\bar{\theta}_-:=\theta_-^{-1}[\C\bar{v}_{-1}]^{\ast}+
\theta_-^0[\C\bar{v}_0]^{\ast}+\theta_-^1[\C\bar{v}_1]^{\ast}
+\frac{\e'}{2n+r-i-1}[\C u]^{\ast}-\frac{\e'}{i}[\C w]^{\ast}
\in\bar{\alpha}_r^{\perp}.$$

For any right $\bar{B}$-module $\bar{E}_-$ with 
$[\bar{E}_-]=\bar{\alpha}_r\in K(\bar{B})$, 
$$\bar{E}_-=
\begin{minipage}{6cm}
$\xymatrix{
0\ar[rr]\ar@{}[drr]|\circlearrowleft&&
\C^{2n-1+r-i}\ar[rr]
\ar@{}[drr]|\circlearrowleft&&0\\
\ar@{}[drr]|\circlearrowleft
\ar[u]\C^n\ar[rr]^{\bar{C}_i}&&
\ar[u]^B\C^{2n-1+r}\ar[rr]^{\bar{D}_j}
\ar@{}[drr]|\circlearrowleft&&\ar[u]\C^{n-1}\\
\ar[u]0\ar[rr]&&
\ar[u]^{A}\C^i\ar[rr]
&&\ar[u]0}$
\end{minipage}
$$
we put 
$$E_-:=
\begin{minipage}{6cm}
$\left(\xymatrix{
\C^n\ar[rr]^{\bar{C}_i}&&
\C^{2n-1+r}\ar[rr]^{\bar{D}_j}
&&\C^{n-1}}\right).$
\end{minipage}$$ 
The following lemma is proved similarly
as in Lemma~\ref{lemkro}~(3).
\begin{lemm}
If we take $\e'$ small enough, then
$\bar{E}_-$ is $\bar{\theta}_-$-semistable 
if and only if $E_-$ is $\theta_-$-semistable
and $A$ is injective 
and $B$ is surjective.
\end{lemm}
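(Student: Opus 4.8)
The statement to prove is the lemma characterizing $\bar\theta_-$-semistability of a $\bar B$-module $\bar E_-$ with $[\bar E_-]=\bar\alpha_r$ in terms of $\theta_-$-semistability of the underlying $B$-module $E_-$ together with injectivity of $A$ and surjectivity of $B$. The plan is to mimic the proof of Lemma~\ref{lemkro}~(3), exploiting that $\bar\theta_-$ is a small perturbation of the pullback of $\theta_-$ by the functor $\bar E_-\mapsto E_-$. First I would set up the dictionary between submodules of $\bar E_-$ and submodules of $E_-$: a submodule $\bar F\subset\bar E_-$ restricts to a submodule $F\subset E_-$ together with subspaces $\bar F u\subset\C^{2n-1+r-i}$ and $\bar F w\subset\C^i$ compatible with the arrows $\iota,\rho$ and with $Fv_0\subset\C^{2n-1+r}$; conversely, given $F\subset E_-$, the minimal choices $\bar F u:=\C^{2n-1+r-i}$ (resp. $\bar F u:=B(Fv_0)$) and $\bar F w:=A^{-1}(Fv_0)\cap\C^i$ etc. produce extremal submodules against which stability must be tested.

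The core of the argument is a two-scale comparison of the weight function. Write $\bar\theta_-(\bar F)=\theta_-(F)+\frac{\e'}{2n+r-i-1}\dim\bar F u-\frac{\e'}{i}\dim\bar F w$. For $\e'\to 0$ the sign of $\bar\theta_-(\bar F)$ is governed by $\theta_-(F)$, so $\bar\theta_-$-semistability forces $\theta_-(F)\ge 0$ for all $F\subset E_-$, i.e. $\theta_-$-semistability of $E_-$. The remaining content — injectivity of $A$ and surjectivity of $B$ — comes from the submodules on which $\theta_-(F)=0$. The key point (exactly as in Lemma~\ref{lemkro}~(3)) is that $\theta_-$-semistable $E_-$ has only $S_0$-type subquotients with $\theta_-$-value $0$ and in fact, since $\theta_-\in C_-$, no proper submodule with $\theta_-(F)=0$ at all except the obvious ones; tracking the $[\C u]^\ast$ and $[\C w]^\ast$ contributions on these shows $\bar\theta_-(\bar F)<0$ precisely when $A$ fails to be injective (a nonzero vector in $\ker A$ spawns a destabilizing $\bar F$ supported at $\bar v_0$ and $w$) or $B$ fails to be surjective (the cokernel of $B$ gives a destabilizing quotient, equivalently a large submodule). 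Conversely, assuming $E_-$ is $\theta_-$-semistable, $A$ injective and $B$ surjective, one checks directly that every proper submodule $\bar F$ has $\bar\theta_-(\bar F)>0$: if $\theta_-(F)>0$ it dominates for small $\e'$, and if $\theta_-(F)=0$ then $F$ is of $S_0$-type, $\dim\bar F w\le \dim F v_0$ and the injectivity/surjectivity hypotheses force the $\e'$-correction to be strictly positive.

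The main obstacle I expect is the bookkeeping in the boundary case $\theta_-(F)=0$: one must show that the finitely many combinatorial types of such $F$ (governed by $\dimv F$ and by how $Fv_0$ meets $\im A$ and $\ker B$) each yield a strictly positive $\e'$-correction, and that the single constant $\e'$ can be chosen uniformly — this is where the explicit coefficients $\frac{1}{2n+r-i-1}$ and $\frac{1}{i}$ in the definition of $\bar\theta_-$ are used, since they make the correction term for any proper $\bar F$ bounded away from $0$ once $A,B$ have full rank. Since $M_-(\alpha_r)$ is of finite type and there are only finitely many relevant dimension vectors $\bar\alpha'\le\bar\alpha_r$, such a uniform $\e'$ exists; I would phrase the conclusion as "for $\e'$ small enough" exactly as in the statement. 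The forward implication (semistability of $\bar E_-$ $\Rightarrow$ the three conditions) is the easier half and follows by contraposition from the explicit destabilizing submodules described above.
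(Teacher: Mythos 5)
Your proposal is correct and follows essentially the same route as the paper, which proves this lemma by exactly the perturbation argument of Lemma~\ref{lemkro}~(3): comparing $\bar{\theta}_-(\bar{F})$ with $\theta_-(F)$ for extremal lifts $\bar{F}$ of submodules $F\subset E_-$, and detecting failure of injectivity of $A$ (resp.\ surjectivity of $B$) via the destabilizing submodule $\ker A$ concentrated at $w$ (resp.\ the destabilizing quotient $\coker B$ concentrated at $u$). The only slip is the passing remark that submodules with $\theta_-(F)=0$ are of $S_0$-type; since $\theta_-$ lies in a chamber, the only such submodules are $0$ and $E_-$ itself, as you in fact note immediately afterwards, so the boundary-case bookkeeping is even simpler than you anticipate.
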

Hence if we denote by $M_{\bar{B}}(\bar{\alpha}_r,\bar{\theta}_-)$
the moduli of $\bar{\theta}_-$-semistable 
$\bar{B}$-module $\bar{E}_-$ with $[\bar{E}_-]=
\bar{\alpha}_r$, 
we get an isomorphism $M_{\bar{B}}(\bar{\alpha}_r,\bar{\theta}_-)
\cong M_-(\alpha_r,i)$. We write as $\bar{E}_-=(E_-,\C^i)
\in M_-(\alpha_r,i)$ by abbreviation.

We have morphisms 
$$q_1\colon M_-(\alpha_r,i)\to M_-(\alpha_r)\colon
\bar{E}_-=(E_-,\C^i)\mapsto E_-$$ and 
$$q_2\colon M_-(\alpha_r,i)\to
M_-(\alpha_{r-i})\colon \bar{E}_-\mapsto
q_2(\bar{E}_-)$$ defined by the canonical exact sequence
$$0\to\C^i\otimes S_0\to E_-\to q_2(\bar{E}_-)\to 0.$$

Similarly we have morphisms $q_1'\colon M_+(\alpha_r,i)
\to M_+(\alpha_r)$
and $q_2'\colon M_+(\alpha_r,i)\to M_+(\alpha_{r-i})$. 
If we take an element $\bar{E}_+:=(E_+,\C^i)\in
M_+(\alpha_r,i)$, then
$q_1'$ and $q_2'$ are defined by $q_1'(\bar{E}_+)=E_+$ 
and $q_2'(\bar{E}_+):=\ker\left(E_+\to
(\C^i)^{\ast}\otimes S_0\right)$. 
\begin{prop}\label{grbun}
The following hold.
\begin{itemize}
\item[(1)] The morphism $q_1\colon M_-(\alpha_r,i)\to M_-(\alpha_r)$ is a $Gr(j,i)$-bundle over each strata $M^j_-(\alpha_r)$. 
In particular we have an isomorphism $$q_1\colon q_1^{-1}(M^i_-(\alpha_r))\cong M^i_-(\alpha_r).$$
\item[(2)] The morphism $q_2\colon M_-(\alpha_r,i)\to M_-(\alpha_{r-i})$ is a $Gr(n+1-r+i,i)$-bundle. In particular, we have an isomorphism $q_2\colon M_-(\alpha_{n+1},i)\cong  M_-(\alpha_{n+1-i})$.
\item[(3)] For any $j\ge 0$, we have 
$q_1^{-1}(M_-^{i+j}(\alpha_r))\cong
q_2^{-1}(M_-^{j}(\alpha_{r-i}))$.
\item[(4)] The morphism $q'_1\colon M_+(\alpha_r,i)\to M_+(\alpha_r)$ is a $Gr(j,i)$-bundle over each strata $M_+^j(\alpha_r)$. 
In particular we have an isomorphism $$(q'_1)^{-1}(M_+^j(\alpha_r))\cong M_+^j(\alpha_r).$$
\item[(5)] The morphism $q'_2\colon M_+(\alpha_r,i)\to M_+(\alpha_{r-i})$ is a $Gr(n-2-r+i,i)$-bundle. In particular we have an isomorphism $q'_2\colon M_+(\alpha_{n-2},i)\cong M_+(\alpha_{n-2-i})$.
\item[(6)] For any $j\ge 0$, we have 
${q_1'}^{-1}(M_+^{i+j}(\alpha_r))\cong
{q_2'}^{-1}(M_+^{j}(\alpha_{r-i}))$.
\end{itemize}
\end{prop}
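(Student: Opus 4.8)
The plan is to verify the six statements by exploiting the explicit description of $M_-(\alpha_r,i)$ (and $M_+(\alpha_r,i)$) as a moduli space $M_{\bar B}(\bar\alpha_r,\bar\theta_-)$ of $\bar B$-modules, together with the set-theoretic dictionary already established in Proposition~\ref{set} and the diagram~(\ref{kro}). Statements (4)--(6) are entirely parallel to (1)--(3), obtained by dualizing; so I would prove (1), (2), (3) in detail and then indicate that the "$+$" versions follow by the same argument applied to $q'_1,q'_2$ using the exact sequence $0\to E'\to E_+\to V\otimes S_0\to 0$ in place of $0\to V^\vee\otimes S_0\to E_-\to E'\to 0$.

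For (1): a point of $M_-(\alpha_r,i)$ over $E_-\in M^j_-(\alpha_r)$ is a choice of $i$-dimensional subspace $V\subset\Hom_B(S_0,E_-)$, and by the zero-locus description in \S~\ref{subsec:3-2} we have $\Hom_B(S_0,\U_x)\cong\ker\delta^\ast_x$ with $\dim=j$ on the stratum $M^j_-(\alpha_r)$. So $q_1$ restricted over $M^j_-(\alpha_r)$ is the relative Grassmannian $Gr(\ker\delta^\ast,i)$ of the rank-$j$ bundle $\ker\delta^\ast$, hence a $Gr(j,i)$-bundle; when $j=i$ this Grassmannian bundle has one-point fibres and, being proper and (by the moduli construction) a closed immersion onto its image fibrewise, is an isomorphism onto $M^i_-(\alpha_r)$. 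The only point needing care is that the relative $\ker\delta^\ast$ is a genuine vector bundle on each stratum — this is exactly the semicontinuity statement $\dim\Hom_B(S_0,-)=j$ being constant on $M^j_-(\alpha_r)$, which is how the stratification was defined, so $\ker\delta^\ast|_{M^j_-(\alpha_r)}$ is locally free of rank $j$ by cohomology-and-base-change.

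For (2): here I would use the universal extension. Over $M_-(\alpha_r,i)$ sits the universal $\bar B$-module $\bar\U_-=(\U_-,\C^i\otimes\mo)$ with its tautological subsheaf $\C^i\otimes S_0\hookrightarrow\U_-$; quotienting gives a flat family over $M_-(\alpha_r,i)$ of $\theta_-$-semistable $B$-modules of class $\alpha_{r-i}$ with vanishing $\Hom_B(S_0,-)$, hence a morphism $q_2$ to $M_-(\alpha_{r-i})=M^0_-(\alpha_{r-i})$, as stated. Conversely, over $M_-(\alpha_{r-i})$ one forms the relative $\Ext^1$-sheaf $\mathcal{E}xt^1_{\pi}(\mathcal{E}',S_0)$; by the Riemann--Roch computation preceding (\ref{seteq}) this is locally free of rank $n+1-r+i$ (using $\Ext^2_B(E',S_0)=0$ and base change), and the relative Grassmannian $Gr(\mathcal{E}xt^1_\pi(\mathcal{E}',S_0),i)$ carries a tautological subbundle from which a universal family of extensions $0\to V^\vee\otimes S_0\to E_-\to E'\to 0$ is built, inducing the inverse morphism. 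The two constructions are mutually inverse by Proposition~\ref{set}~(1),(2), giving the $Gr(n+1-r+i,i)$-bundle structure; for $r=n+1$ the rank is $i$, so the bundle is a point-bundle and $q_2$ is an isomorphism $M_-(\alpha_{n+1},i)\cong M_-(\alpha_{n+1-i})$. The main obstacle will be checking that these universal constructions glue to honest morphisms of schemes rather than merely matching on points — i.e.\ verifying flatness of the universal quotient and the local freeness/base-change for $\mathcal{E}xt^1_\pi$; I would invoke the standard machinery (\cite{HL}, \cite{D}) here. Statement (3) is then a formal consequence: over the locus $M^{i+j}_-(\alpha_r)\subset M_-(\alpha_r)$ the subspace $V$ of dimension $i$ leaves a quotient $E'$ with $\dim\Hom_B(S_0,E')=j$, and conversely, so both $q_1^{-1}(M^{i+j}_-(\alpha_r))$ and $q_2^{-1}(M^{j}_-(\alpha_{r-i}))$ parametrize the same data $(E',V',W)$ with $\dim V'=i$, $\dim W=j$, $W\subset\Ext^1_B(E',S_0)$; comparing the two descriptions via the short exact sequence gives the isomorphism. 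The cases (4),(5),(6) are the same arguments with $\Ext^1_B(S_0,-)$ replacing $\Ext^1_B(-,S_0)$, whose relative rank is $n-2-r+i$ by the companion Riemann--Roch computation.
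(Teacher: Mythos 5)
Your proposal is correct and follows essentially the same route as the paper: identify the fibres of $q_1$ and $q_2$ with Grassmannians of $\Hom_B(S_0,-)$ and $\Ext^1_B(-,S_0)$, realize these as locally free sheaves on the relevant base (the paper does this concretely as $\ker\delta^{\ast}$ and $(\ker{\gamma'}^{\ast})^{\vee}$ for universal bundle maps, where you invoke relative $\mathcal{H}om/\mathcal{E}xt$ machinery with base change), and conclude by comparing universal properties, with (3) and the ``$+$'' cases formal duals. One slip to fix in (2): the quotient $q_2(\bar E_-)$ does \emph{not} in general have vanishing $\Hom_B(S_0,-)$ (only when $V$ is all of $\Hom_B(S_0,E_-)$), and $M_-(\alpha_{r-i})\neq M^0_-(\alpha_{r-i})$ --- a point over $M^{i+j}_-(\alpha_r)$ maps to $M^{j}_-(\alpha_{r-i})$, exactly as your own part (3) records; this does not harm the bundle structure, since $\Hom_B(E',S_0)=\Ext^2_B(E',S_0)=0$ holds for every $E'\in M_-(\alpha_{r-i})$ by Lemma~\ref{lemkro}, so $\Ext^1_B(E',S_0)$ has constant rank $n+1-r+i$ on the whole of $M_-(\alpha_{r-i})$, not just on $M^0_-(\alpha_{r-i})$.
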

\begin{proof}
(1)  The fiber of $q_1$ over $E_-\in M^j_-(\alpha_r)$ is parametrized by $Gr(\Hom_B(S_0,E_-),i)$ for all $j\ge i$. 
For the universal bundle $\U$ in (\ref{univ1}), as in \S~\ref{subsec:3-3} we put $\delta^{\ast}:=\sum_i\delta^{\ast}_i \otimes x_i\colon\U_0\to\U_1\otimes\mathbf x$.
Then for any point $p\in M_-(\alpha_r)$, we have $\Hom_B(S_0,\U_p)\cong (\ker \delta^{\ast})_p$.
Since $\ker \delta^{\ast}$ is locally free of rank $j$ on $M^j_-(\alpha_r)$ (cf. \cite[Chapter II]{ACGH}), we have $Gr(j,i)$-bundle $Gr(\ker \delta^{\ast}|_{M_-^j(\alpha_r)},i)$ on $M^j_-(\alpha_r)$.

On the other hand, by the definition of 
$M^j_-(\alpha_r)$ (cf \S~\ref{subsec:3-2}),
we easily see that $M^j_-(\alpha_r)$
represents the moduli functor 
parametrizing families of $\theta_-$-semistable
$B$-modules $E_-$ with $[E_-]=\alpha_r$ and 
$\dim_{\C}\Hom_B(S_0,E_-)=j$. 
Hence $q_1^{-1}(M^j_-(\alpha_r))$ have the same 
universal property of 
$Gr(\ker \gamma^{\ast}|_{M_-^j(\alpha_r)},i)$ and
we have $q_1^{-1}(M^j_-(\alpha_r))
\cong Gr(\ker \gamma^{\ast}|_{M^j_-(\alpha_r)},i)$.\\
(2) The fiber of $q_2$ over $E'=q_2(\bar{E}_-)$ 
is parametrized by $Gr\left(\Ext^1_B(E',S_0),i\right)$.
For the universal family 
$\U'=\left(\U'_{-1}\stackrel{{\gamma'}^{\ast}_i}
{\to}\U'_0\stackrel{{\delta'}^{\ast}_j}{\to}\U'_1\right)$
of $B$-modules on $M_-(\alpha_{r-i})$,
we put $${\gamma'}^{\ast}:=\sum_i
{\gamma'}^{\ast}_i\otimes x^{\ast}_i\colon
\U_{-1}\otimes\mathbf x\to\U_0.$$ 
Since we have $(\ker{\gamma'}^{\ast})^{\vee}_{p'}
\cong \Ext^1_B(\U'_{p'},S_0)$ for any 
$p'\in M_-(\alpha_{r-i})$.
Similarly as in (1) we get $$M_-(\alpha_r,i)\cong 
Gr\left((\ker{\gamma'}^{\ast})^{\vee},i\right).$$\\
(3) Since spaces of both sides have the same 
universal property, our claim holds.\\
(4), (5) and (6) are proved similarly as in 
(1), (2) and (3).
\end{proof}

\begin{coro}
$M^i_-(\alpha_r)$ and $M^i_+(\alpha_r)$
are smooth for any $i,r\ge 0$.
\end{coro}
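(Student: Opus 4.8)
The plan is to deduce smoothness of the Brill--Noether strata from the smoothness of the ambient moduli spaces established in Proposition~\ref{sm} together with the Grassmann-bundle structure of Proposition~\ref{grbun}. The key observation is that $q_1\colon M_-(\alpha_r,i)\to M_-(\alpha_r)$ restricts to an \emph{isomorphism} $q_1^{-1}(M^i_-(\alpha_r))\cong M^i_-(\alpha_r)$ (Proposition~\ref{grbun}~(1)), so it suffices to prove that $q_1^{-1}(M^i_-(\alpha_r))$, viewed as a locally closed subscheme of the coherent-system space $M_-(\alpha_r,i)$, is smooth; and the latter space is identified with the Grassmann bundle $Gr\!\left((\ker{\gamma'}^{\ast})^{\vee},i\right)$ over $M_-(\alpha_{r-i})$ by Proposition~\ref{grbun}~(2). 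Since $M_-(\alpha_{r-i})\cong M_{\PP^2}(r-i,1,n)$ is smooth by Proposition~\ref{sm}, any Grassmann bundle over it is smooth, hence $M_-(\alpha_r,i)$ is smooth. The remaining point is that $q_1^{-1}(M^i_-(\alpha_r))$ is not all of $M_-(\alpha_r,i)$ but an open subscheme of it: by Proposition~\ref{grbun}~(3) with $j=0$ we have $q_1^{-1}(M^i_-(\alpha_r))\cong q_2^{-1}(M^0_-(\alpha_{r-i}))$, and $M^0_-(\alpha_{r-i})$ is open in $M_-(\alpha_{r-i})$, so its preimage under the smooth morphism $q_2$ is open in the smooth scheme $M_-(\alpha_r,i)$, therefore smooth. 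Composing with the isomorphism $q_1$ gives smoothness of $M^i_-(\alpha_r)$.

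For $M^i_+(\alpha_r)$ the argument is entirely parallel, using parts (4), (5), (6) of Proposition~\ref{grbun} in place of (1), (2), (3): one has $(q'_1)^{-1}(M_+^i(\alpha_r))\cong M_+^i(\alpha_r)$, and ${q'_1}^{-1}(M_+^i(\alpha_r))\cong {q'_2}^{-1}(M_+^0(\alpha_{r-i}))$ is open in the Grassmann bundle $M_+(\alpha_r,i)\cong Gr\!\left(\ker{\gamma'}^{\ast}|,i\right)$ over the smooth scheme $M_+(\alpha_{r-i})$ (smooth by Proposition~\ref{sm}), hence smooth. Note that for $i>r$ both strata are empty by the preceding lemma, so there is nothing to prove there, and for $i\le 0$ the case $i=0$ gives the open dense strata, which are smooth as open subschemes of the smooth schemes $M_\pm(\alpha_r)$.

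I do not expect a serious obstacle here: the corollary is essentially a formal consequence of the two propositions it follows, the only thing to be careful about being the distinction between the closed subscheme $M^{\ge i}_\pm(\alpha_r)$ (cut out by a degeneracy locus, which need \emph{not} be smooth) and the open stratum $M^i_\pm(\alpha_r)$ inside it --- the argument must route through the coherent-system space, where the stratum becomes an \emph{open} subscheme of a Grassmann bundle over a smooth base, rather than attempting to analyze the degeneracy locus directly. Once that routing is in place the proof is a one-line invocation of ``open subscheme of smooth is smooth'' plus ``Grassmann bundle over smooth is smooth''.
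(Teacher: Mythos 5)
Your proof is correct and follows essentially the same route as the paper: identify $M^i_\pm(\alpha_r)$ with $q_1^{-1}(M^i_\pm(\alpha_r))\cong q_2^{-1}(M^0_\pm(\alpha_{r-i}))$ via Proposition~\ref{grbun}~(1) and (3), and conclude smoothness from the Grassmann-bundle structure of Proposition~\ref{grbun}~(2)/(5) over the smooth spaces of Proposition~\ref{sm}. The only cosmetic difference is that the paper phrases the conclusion as ``a Grassmann bundle over the smooth open stratum $M^0_-(\alpha_{r-i})$'' while you phrase it as ``an open subscheme of a Grassmann bundle over the smooth $M_-(\alpha_{r-i})$''; these are the same argument.
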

\begin{proof}
The restriction of the morphism $q_1\colon M_-(\alpha_r,i)\to M_-(\alpha_r)$ gives an isomorphism $$q_1^{-1}(M_-^i(\alpha_r))\cong M_-^i(\alpha_r).$$
By Proposition~\ref{grbun}~(3) we have an isomorphism $M_-^i(\alpha_r)\cong q_2^{-1}(M_-^0(\alpha_{r-i}))$.
Hence by Proposition~\ref{grbun}~(2), $M_-^i(\alpha_r)$ is isomorphic to a Grassmann-bundle over $M_-^0(\alpha_{r-i})$.
Since $M_-^0(\alpha_{r-i})$ is smooth by Proposition~\ref{sm}, we see that $M_-^i(\alpha_r)$ is also smooth.
Similarly $M_+^i(\alpha_r)$ is shown to be smooth.
\end{proof}

\subsection{Stratified Grassmann bundle}
\label{subsec:3-6}
In this section we show that morphisms $f_{\pm}\colon M_{\pm}(\alpha_r)\to M_0(\alpha_r)$ are described by stratified Grassmann bundles using Proposition~\ref{grbun}.

We consider the diagram: $$\xymatrix{&\ar[dl]_{\cong}^{q_2}M_-(\alpha_{n+1},n+1-r)\ar[dr]_{q_1}&\\M_-(\alpha_{r})&&M_-(\alpha_{n+1}).
}$$
By Proposition~\ref{grbun}~(2), $q_2$ is an isomorphism and we have a map $q_1\circ q_2^{-1}\colon M_-(\alpha_r)\to M_-(\alpha_{n+1})$,
which coincides with $g_-$ by (\ref{ex2}).
This gives another proof of Theorem~\ref{yos}. 
Similarly the map $q_1'\circ{q_2'}^{-1}\colon M_+(\alpha_r)\cong M_+(\alpha_{n-2},n-2-r)\to M_+(\alpha_{n-2})$ coincide with the map $g_+$ by (\ref{ex3}).  
For any $r\ge 0$, we have isomorphisms $M^0_-(\alpha_r)\cong M^{n+1-r}_-(\alpha_{n+1})$ and $M^0_+(\alpha_r)\cong M^{n-2-r}_+(\alpha_{n-2})$ via $g_-$ and $g_+$ respectively.
In particular, we have isomorphisms $$M^0_-(\alpha_{r-i})\cong M^{n+1-r+i}_-(\alpha_{n+1}), M^0_+(\alpha_{r-i})\cong M^{n-2-r+i}_+(\alpha_{n-2}).$$
By the diagram~(\ref{kro}), $M^{n+1-r+i}_-(\alpha_{n+1})$ and $M^{n-2-r+i}_-(\alpha_{n-2})$ coincide with images $f_-(M_-^i(\alpha_r))\cong f_+(M_+^i(\alpha_r))$ of $f_-$ and $f_+$. 
This gives a proof of (2) in Main Theorem~\ref{inmain}.

By Proposition~\ref{grbun} and the diagram~(\ref{kro}) we also have proofs of (3) and (4) in Main Theorem~\ref{inmain}.

\subsection{Hodge polynomials of flips}
\label{subsec:3-7}
We study the difference between Hodge polynomials of $M_-(\alpha_r)$ and $M_+(\alpha_r)$. 
To do this we use the virtual Hodge polynomial $e(Y):=\sum_{p,q}e^{p,q}(Y)x^py^q$ for any variety $Y$ (cf. \cite{DK}).

By Main Thoerem~\ref{inmain} we get the following diagram
\begin{equation}
\xymatrix{
\sqcup M_+^i(\alpha_r)\ar[rd]^{f_+}& &\sqcup M_-^i(\alpha_r) \ar[ld]_{f_-}\\
&\sqcup M_0^i(\alpha_r)&}
\end{equation}
where restrictions of $f_+$ and $f_-$ to $M_\pm^i$ are $Gr(n-2-r+i,i)$-bundle and $Gr(n+1-r+i,i)$-bundle over $M^i_0(\alpha_r)\cong M_\pm^0(\alpha_{r-i})$, respectively.
Hence we get the following equality.
\begin{multline}\label{diff}
e\left(M_-(\alpha_r)\right)-e\left(M_+(\alpha_r)\right)=\\
\sum_{i>0}\Big(e\left(Gr(n+1-r+i,i)\right)-e\left(Gr(n-2-r+i,i)\right)\Big)e\left(M_-^0(\alpha_{r-i})\right).
\end{multline}

In the following we compute the Hodge polynomial of $M_+(\alpha_r)$ from that of $M_-(\alpha_r)$ in the case where $r=1,2$. 
In this case, we know the Hodge polynomial of $M_-(\alpha_r)\cong M_{\PP^2}(r,1,n)$ from \cite{ES} and \cite{Y1}.
We need the following proposition. 
\begin{prop}\label{p2}
We have following isomorphisms:
$$M_-(\alpha_0)\cong M_+(\alpha_0)\cong\PP^2.$$
\end{prop}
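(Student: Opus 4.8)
The plan is to identify both moduli spaces with an explicit projective space by unwinding the definitions in the case $\alpha_0 = {}^t(n, 2n-1, n-1)$, i.e.\ $r=0$, for which $\ch(\alpha_0) = -(0,1,\tfrac12-n)$. Since $\ch(\alpha_0)$ has rank zero, the corresponding sheaves on $\PP^2$ under the equivalence $\Phi$ are (shifts of) $1$-dimensional torsion sheaves supported on a line, and $M_{\PP^2}(0,1,n)$ is classically the relative $\mathrm{Hilbert}$/compactified-Jacobian picture, which is known to be $\PP^2$ when $c_1 = 1$ (the moduli of pure dimension-$1$ sheaves with Hilbert polynomial of a line: such a sheaf is $\mo_\ell(d)$ for a unique line $\ell$, so the moduli is $(\PP^2)^\vee \cong \PP^2$). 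However, to keep the argument internal to the $B$-module description, I would instead run the Kronecker-module analysis of \S\ref{subsec:3-1} directly.

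First I would invoke the isomorphisms $\pi_-^{n+1}\colon M_-(\alpha_{n+1})\xrightarrow{\cong} M_T(\alpha_T)$ and $\pi_+^{n-2}\colon M_+(\alpha_{n-2})\xrightarrow{\cong} M_T(\alpha_T)$ established right after Lemma~\ref{lemkro}; these are stated for the specific indices $n+1$ and $n-2$, but the point is that $M_T(\alpha_T)$ with $\alpha_T = n[\C v_{-1}] + (n-1)[\C v_1]$ is the moduli of stable $3$-Kronecker modules of dimension vector $(n, n-1)$. I would then note that $r=0$ is exactly the ``middle'' case and that, by Lemma~\ref{lemkro}~(3) and (4), $M_\pm(\alpha_0)$ maps to $M_T(\alpha_T)$ via $\pi_\pm^0$, with fibers governed by $\Ext^1_B(E, S_0)$ resp.\ $\Ext^1_B(S_0, E)$; by the Riemann--Roch computation in \S\ref{subsec:3-3}, for $E' \in M_-^0(\alpha_0) = M_+^0(\alpha_0)$ we get $\dim_\C \Ext^1_B(E', S_0) = n+1$ and $\dim_\C \Ext^1_B(S_0, E') = n-2$. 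So the real content is to show $M_T(\alpha_T) = \PP^2$ for the $3$-Kronecker quiver with dimension vector $(n,n-1)$, together with bookkeeping that the extension data collapses $M_\pm(\alpha_0)$ back onto $\PP^2$ without changing it.

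The cleanest route for $M_T(\alpha_T) \cong \PP^2$: a stable Kronecker module $(\C^n \xrightarrow{A_i} \C^{n-1})$, $i=0,1,2$, is the same as a $\C[x_0,x_1,x_2]$-linear presentation, and for dimension vector $(n, n-1)$ the generic cokernel is the structure sheaf of a point, but globally such modules are classified by the line in $\PP^2$ cut out by the common vanishing — more precisely, the Kronecker module corresponds under Beilinson-type duality to a torsion sheaf $\mo_\ell(d)$, and the assignment ``module $\mapsto$ its supporting line $\ell \in (\PP^2)^\vee$'' is an isomorphism $M_T(\alpha_T)\cong(\PP^2)^\vee\cong\PP^2$. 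I would make this precise by exhibiting the universal family: the $3\times 1$ (resp.\ appropriate-shaped) matrix of the inverse maps $\pi_+^{n-2}$, $\pi_-^{n+1}$ written down explicitly in \S\ref{subsec:3-1} shows the module is determined by $(A_0,A_1,A_2)$ up to simultaneous $\GL_n\times\GL_{n-1}$; for the dimension vector $(n,n-1)$, stability forces this to be, up to the group action, the data of a single linear form $a_0 x_0 + a_1 x_1 + a_2 x_2$, giving the coordinates on $\PP^2$.

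The main obstacle I expect is pinning down $M_T(\alpha_T)\cong\PP^2$ rigorously rather than heuristically: one must verify that every stable $(n,n-1)$-dimensional $3$-Kronecker module is, after change of basis, in the ``one linear form'' normal form, and that the resulting map to $\PP^2$ (or $(\PP^2)^\vee$) is an isomorphism of schemes, not just a bijection — smoothness of both sides (Proposition~\ref{sm}) plus properness and bijectivity gives this by Zariski's main theorem once bijectivity is known. An alternative that sidesteps the Kronecker normal-form computation is to cite directly that $M_{\PP^2}(0,1,n) \cong \PP^2$ from the literature on moduli of pure $1$-dimensional sheaves on $\PP^2$ (e.g.\ via the support map to $|\mo_{\PP^2}(1)| \cong \PP^2$, which is an isomorphism because the fibers — compactified Jacobians of lines — are points), and then transport this through $M_-(\alpha_0) \cong M_{\PP^2}(0,1,n)$ using \eqref{isom2}, and separately identify $M_+(\alpha_0)$ with $M_-(\alpha_0)$ by checking that for $r=0$ there is no wall between $C_-$ and $C_+$ at $W_0$ (equivalently, $M_-^i(\alpha_0) = M_+^i(\alpha_0) = \emptyset$ for $i>0$ by the lemma following Proposition~\ref{set}, so $f_\pm$ are isomorphisms onto $M_0(\alpha_0)$), which is probably the quickest honest proof.
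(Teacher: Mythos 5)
Your fallback route (the last paragraph) has the right shape, but both of your routes have concrete problems. The central claim of your main route --- that $M_T(\alpha_T)\cong\PP^2$ for the $3$-Kronecker quiver with dimension vector $(n,n-1)$ --- is false: by the diagram~(\ref{kro}) we have $M_T(\alpha_T)\cong M_-(\alpha_{n+1})\cong M_{\PP^2}(n+1,1,n)$, which has dimension $n^2-n$, and the map $\pi_0^r\colon M_0(\alpha_r)\to M_T(\alpha_T)$ is only a closed embedding (Lemma~\ref{clo}), not an isomorphism. A generic stable $(n,n-1)$-Kronecker module is \emph{not} in your ``single linear form'' normal form; only the modules in the two-dimensional image of $\pi_0^0$ correspond to sheaves $\mo_L(1-n)$. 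So the reduction ``the real content is $M_T(\alpha_T)=\PP^2$'' cannot work.

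Your alternative route is closer to the truth but rests on two citations used outside their established range, and filling those in is exactly the content of the paper's Appendix. First, the isomorphism \eqref{isom2} comes from a theorem about torsion free sheaves of positive rank; the paper deliberately does \emph{not} assert $M_-(\alpha_0)\cong M_{\PP^2}(0,1,n)$ and instead constructs Bridgeland stability conditions $\sigma_{(sH,tH)}$, classifies the semistable objects of class $-\alpha_0$ directly (showing each is $\mo_L(1-n)$ for a line $L$, whence $\PP^2$), and uses \cite[Proposition~4.4]{O} to identify this with $M_B(\alpha_0,\Tilde{\theta}^0)$. Second, your argument that $W_0$ is not a wall invokes the lemma $M_\pm^i(\alpha_0)=\emptyset$ for $i>0$, but that lemma is proved via \cite[Lemma~5.7]{Y2} through the sheaf-theoretic identification you are trying to establish, so the reasoning is circular at $r=0$; moreover, emptiness of the strata inside $M_\pm(\alpha_0)$ does not by itself exclude strictly $\theta_0$-semistable modules arising from nonempty $M_\pm^0(\alpha_{-i})$ with $i>0$, which would still create a wall. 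The paper closes this gap by showing (Lemma~\ref{lemp2}) that every $\Tilde{\theta}^0$-semistable object remains stable for all $\Tilde{\theta}^s$ with $-1<s\le 0$, so that $C_-$, $W_0$ and $C_+$ lie in a single chamber for $\alpha_0$. Your proposal, as written, does not supply either of these two missing arguments.
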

A proof of this proposition is given in Appendix.
From this proposition and (\ref{diff}), we get the following: $$\cdots=M_+(1,1,1)=M_+(1,1,2)=\emptyset,$$
\begin{equation*}
\begin{split}
e\left(M_+(1,1,3)\right)&=t^{12}+t^{10}+3t^8+3t^6+3t^4+t^2+1,\\
e\left(M_+(1,1,4)\right)&=t^{16}+2t^{14}+5t^{12}+8t^{10}+10t^8+8t^6+5t^4+2t^2+1,\\
e\left(M_+(1,1,5)\right)&=\cdots+21t^{10}+19t^8+11t^6+6t^4+2t^2+1,\\
e\left(M_+(1,1,n)\right)&=e\left(M_{\PP^2}(1,1,n)\right)-
(t^{2n+4}+2t^{2n+2}+3t^{2n}+2t^{2n-2}+t^{2n-4}),
\end{split}
\end{equation*}
and $$\cdots=M_+(2,1,1)=M_+(2,1,2)=M_+(2,1,3)=\emptyset,$$
\begin{equation*}
\begin{split}
e\left(M_+(2,1,4)\right)=\cdots&+12t^{12}+10t^{10}+8t^8+5t^6+3t^4+t^2+1,\\
e\left(M_+(2,1,5)\right)=\cdots&+67t^{16}+60t^{14}+48t^{12}+32t^{10}+20t^8+10t^6\\
&+5t^4+2t^2+1,
\end{split}
\end{equation*}
where $t=xy$.

\appendix

\section{Proof of Proposition~\ref{p2}}

We take $\alpha_0\in K(\PP^2)$ such that 
$\ch(\alpha_0)=-(0,1,n)$ as in \S3 and give a
proof of Proposition~\ref{p2}.

\subsection{Bridgeland stability}
We briefly introduce 
the concept of Bridgeland stability. 
For details the reader can consult \cite{B2}.
Let $\A$ be an abelian category, $K(\A)$ the
Grothendieck group of $\A$. 
\begin{defi}
A stability function $Z$ on $\A$ is a 
group homomorphism from $K(\A)$ to $\C$
satisfying that for any object $E\in\A$,
if $E$ is not equal to zero we have 
$Z(E)\in\R_{>0}\exp(\sqrt{-1}\pi\phi(E))$ 
with $0<\phi(E)\le 1$.
\end{defi}
The real number $\phi(E)$ is called phase of 
$E$. 
\begin{defi}
A nonzero object $E\in\A$ is semistable with respect to $Z$
if and only if for any proper subobject $0\neq
F\subsetneq E$ we have $\phi(F)\le\phi(E)$. If the
inequality is always strict we call $E$ to be 
stable with respect to $Z$.
\end{defi}
Let $\T$ be a triangulated category,
$K(\T)$ the Grothendieck group of $\T$.
\begin{defi}
A stability condition $\sigma$ on $\D(\PP)$ 
is a pair $\sigma=(\A,Z)$, which consists of
a full subcategory $\A$ of $\T$ and a group
homomorphism $Z\colon K(\T)\to\C$ satisfying
following conditions:\\
\begin{itemize}
\item $\A$ is a heart of a bounded t-structure of 
$\T$, which implies $\A$ is an abelian category and 
$K(\A)$ is isomorphic to $K(\T)$ by the inclusion
$\A\subset\T$. Hence we always identify them.\\
\item $Z$ is a stability function on $\A$ 
via the above identification $K(\A)=K(\T)$. \\
\item $Z$ has Harder-Narasimhan property.
\end{itemize}
\end{defi}
We omit the definition of ``a heart of a bounded 
t-structure'' and ``Harder-Narasimhan property''
(see \cite[\S~2 and \S~3]{B2}). 
We denote a set of all stability conditions satisfying
a technical condition called ``local finiteness'' 
(see \cite[\S~5]{B2})
by $\Stab(\T)$. 

\begin{defi}
For a stability condition $\sigma=(\A,Z)\in\Stab(\T)$,
an object $E\in \T$ is called $\sigma$-(semi)stable
if and only if $E$ belongs to 
$\A$ up to  
shift functors $[n]\colon\T\to\T$ for $n\in\Z$, 
and it is semistable with respect to $Z$. 
\end{defi}

In the following we only consider the case where 
$\T=\D(\PP^2)$ and we put $\Stab(\PP^2):=\Stab(\T)$. 
For $\alpha\in K(\PP^2)$ and 
$\sigma=(\A,Z)\in\Stab(\PP^2)$, we define a moduli functor 
$\M_{\D(\PP^2)}(\alpha,\sigma)$ of $\sigma$-semistable
objects $E\in\A$ with $[E]=\alpha\in K(\PP^2)$ as follows.
The moduli functor $\M_{\D(\PP^2)}(\alpha,\sigma)$
is a functor from $(\text{Sch}/\C)$ to $(\text{Set})$.
For a scheme $S$ over $\C$ it sends $S$ to a set
$\M_{\D(\PP^2)}(\alpha,\sigma)(S)$ of families 
$\F\in \D(\PP^2\times S)$ of $\sigma$-semistable 
objects with class $\alpha$ in $K(\PP^2)$. This means 
that for any $\C$-valued point $s\in S$, the fiber
$\mathbf L \iota^{\ast}_s\F\in D^-(\PP^2)$ belongs
to the full subcategory $\A\subset \D(\PP^2)$ and
$\sigma$-semistable with $[\mathbf L\iota^{\ast}_s
\F]=\alpha\in K(\PP^2)$. 

There exists a right action of $\grp$ on $\Stab(\PP^2)$ and this action does not change semistable objects.
Hence for any $\alpha\in K(\PP^2)$, $\sigma\in\Stab(\PP^2)$ and $g\in\grp$, there exists an integer $n\in\Z$ such that shift [n] induces an isomorphism of functors 
\begin{equation*}
\M_{\D(\PP^2)}(\alpha,\sigma)\cong \M_{\D(\PP^2)}((-1)^n\alpha,\sigma g) \colon E\mapsto E[n].
\end{equation*}

\subsection{Geometric stability}
Let $H$ be the ample generator of $\Pic(\PP^2)$ and $s,t\in\R$ with $t>0$. 
For any torsion free sheaf $E$ on $\PP^2$, the slope of $E$ is defined by $\mu_H(E):=\frac{c_1(E)}{\rk(E)}$ and define $\mu_H$-semistability. 
$E$ has the Harder-Narasimhan filtration with $\mu_H$-semistable factors.
We denote the maximal value and the minimal value of slopes of $\mu_H$-semistable factors of $E$ by $\mu_{H-\max}(E)$ and $\mu_{H-\min}(E)$, respectively. 
Then we define a pair $\sigma_{(sH,tH)}=(\A_{(sH,tH)},Z_{(sH,tH)})$ as follows. 
\begin{defi}
An object $E\in\D(\PP^2)$ belongs to the full subcategory $\A_{(sH,tH)}$ if and only if 
\begin{itemize}
\item $\mathcal{H}^i(E)=0$ for all $i\neq 0,-1$\\
\item $\mathcal{H}^0(E)$ is torsion or $\mu_{H-\min} (\mathcal{H}^0(E)_{\fr})>st$, where $\mathcal{H}^0(E)_{\fr}$ is the free part of $\mathcal{H}^0(E)$\\
\item $\mathcal{H}^{-1}(E)$ is torsion free and $\mu_{H-\max} (\mathcal{H}^{-1}(E))\le st$.
\end{itemize}
The group homomorphism $Z_{(sH,tH)}$ is defined by $$Z_{(sH,tH)}(E):=-\int_{\PP^2} \ch(E)\exp(-sH-\sqrt{-1}tH).$$
\end{defi}

If $s$ and $t$ belong to $\Q$, then $\sigma_{(sH,tH)}$ is a stability condition on $\D(\PP^2)$ (cf. \cite{ABL}). 
In general we do not know wheather $\sigma_{(sH,tH)}$ is a stability condition on $\D(\PP^2)$. 
We have the following criterion due to Bridgeland.

\begin{prop}{\emph{\bf cf. \cite[Proposition~3.6]{O}}}\label{geom}
For $\sigma\in\Stab(\PP^2)$, there exist $g\in\grp$ and $s,t\in\R$
with $t>0$ such that $\sigma=\sigma_{(sH,tH)}g$ if and only if
the following conditions $($\emph{i}$)$ and $($\emph{ii}$)$ hold.
\begin{itemize}
\item[(i)] For any closed point $x\in\PP^2$, the skyscraper sheaf $\mo_x$
is $\sigma$-stable.
\item[(ii)] For any $\beta\in K(\PP^2)$, if $Z(\beta)=0$ then
$c_1^2-2r\ch_2<0$ where $\ch(\beta)=(r, c_1,\ch_2)$. 
\end{itemize}
\end{prop}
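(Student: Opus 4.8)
The plan is to treat the two implications separately, the reverse one carrying essentially all of the difficulty. For the forward implication, suppose $\sigma = \sigma_{(sH,tH)}g$ with $g \in \grp$. Since the right $\grp$-action preserves the set of semistable objects and acts on the central charge through an element of $\GL^+(2,\R)$, both conditions (i) and (ii) are invariant under it, so it suffices to check them for $\sigma_{(sH,tH)}$ itself. Writing $\ch(\beta) = (r, c_1, \ch_2)$ and normalizing $\int_{\PP^2} H^2 = 1$, a direct expansion gives
$$Z_{(sH,tH)}(\beta) = -\left[\frac{r}{2}(s+\sqrt{-1}t)^2 - c_1(s+\sqrt{-1}t) + \ch_2\right].$$
Here $\ch(\mo_x) = (0,0,1)$ yields $Z_{(sH,tH)}(\mo_x) = -1$, so $\mo_x$ has phase $1$ and, being a simple object of $\A_{(sH,tH)}$, is automatically stable, giving (i). For (ii), the equation $Z_{(sH,tH)}(\beta) = 0$ is quadratic in $w = s + \sqrt{-1}t$, and the requirement $t > 0$ forces a non-real root, which for $r \neq 0$ is exactly $c_1^2 - 2r\ch_2 < 0$ while for $r = 0$ only $\beta = 0$ is allowed.

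For the reverse implication, assume $\sigma = (\A, Z)$ satisfies (i) and (ii). First I would normalize using the $\grp$-action: all skyscraper sheaves share the class $[\mo_x] \in K(\PP^2)$, so stability (i) gives them a common phase, and after applying a suitable $g \in \grp$ I may assume every $\mo_x$ lies in $\A$ with $Z(\mo_x) = -1$ of phase $1$. Since $Z$ factors through $\ch$, this fixes the $\ch_2$-coefficient and leaves $Z(\beta) = a_0 r + a_1 c_1 - \ch_2$ with $a_0, a_1 \in \C$ to be determined. Comparing with the expansion above, matching to $Z_{(sH,tH)}$ amounts to solving $a_1 = s + \sqrt{-1}t$ and $a_0 = -\tfrac{1}{2}(s+\sqrt{-1}t)^2$ for a pair $(s,t)$ with $t > 0$; condition (ii), which says the kernel of $Z$ avoids every class with $c_1^2 - 2r\ch_2 \ge 0$, is precisely what guarantees such a pair exists and is unique, so that $Z = Z_{(sH,tH)}$ after normalization.

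It remains to show that the heart $\A$ itself coincides with $\A_{(sH,tH)}$, and this is where the main obstacle lies. The strategy I would follow is Bridgeland's: using that each $\mo_x$ is $\sigma$-stable of phase $1$ together with the stability of sufficiently positive twists $\mo_{\PP^2}(m)$, one identifies the $\sigma$-semistable objects of phase in $(0,1]$ with the tilt of $\Coh(\PP^2)$ at the torsion pair cut out by the slope $\mu_H = st$. Because a stability condition is pinned down by its central charge together with which objects lie in the heart, and both $\A$ and $\A_{(sH,tH)}$ arise as the extension-closure of the semistable objects of phase in $(0,1]$ for the now-common central charge $Z = Z_{(sH,tH)}$, they must agree. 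The hard part is controlling the Harder--Narasimhan behaviour near the boundary phases and ruling out any unexpected semistable object that would force a different tilt; here condition (ii), which excludes central-charge-zero destabilizers, together with the classification of stable torsion free sheaves on $\PP^2$, supplies the essential input.
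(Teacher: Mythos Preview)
The paper does not actually prove this proposition: it is stated with the attribution ``cf.\ \cite[Proposition~3.6]{O}'' and no argument is given, the text moving directly on to \S A.3. So there is no proof in the paper to compare your proposal against; the result is imported wholesale from the author's earlier work.

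That said, a brief comment on your sketch. The forward direction is fine. In the reverse direction your outline follows the standard Bridgeland template, but the step where you claim that condition~(ii) ``is precisely what guarantees such a pair $(s,t)$ exists'' is too quick. After normalizing $Z(\mo_x)=-1$ you have used only part of the $\grp$-freedom; the residual action (shears fixing the real axis) is what lets you bring $Z$ into the form $Z_{(sH,tH)}$, and condition~(ii) enters not to manufacture $(s,t)$ but to ensure that after this normalization the imaginary part of $Z$ on rank-one classes has the correct sign so that $t>0$ and the resulting $\A_{(sH,tH)}$ is a genuine heart. The identification of the heart itself is, as you say, the substantive part, and is carried out in detail in Bridgeland's K3 paper and in \cite{O}; your description of that step is accurate in spirit but would need those references to be made rigorous.
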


\subsection{Proof of Proposition~\ref{p2}}
We take $\sigma^s=(\A,Z^s)\in\Stab(\PP^2)$ for $s\in\R$ with $-1<s<1$, where $$\A=\langle\mo_{\PP^2}(-1)[2],\mo_{\PP^2}[1], \mo_{\PP^2}\rangle$$ and $Z^s$ is a group homomorphism $Z^s\colon K(\PP^2)\to\C$ defined by $$Z^s(e_{-1})=\frac{-s-1}{2}, Z^s(e_0)=1+\sqrt{-1}, Z^s(e_{1})=\frac{-s+1}{2}$$ for $e_i=[\mo_{\PP^2}(i)[1-i]]\in K(\PP^2)$, $i=-1,0,1$. 
Then by Proposition~\ref{geom} we see that there exists an element $g^s\in\grp$ such that \begin{equation}\label{grp}
\sigma^s=\sigma_{(sH,tH)}g^s,
\end{equation} 
where $t=\sqrt{1-s^2}$.

We take $\alpha_0={}^t(n,2n-1,n-1)\in K(B)$ and define a group homomorphism $\Tilde{\theta}^s \colon K(B)\to \C$ by $$\Tilde{\theta}^s(\beta)=\det\begin{pmatrix}\Re Z^s(\beta)& \Re Z^s(\alpha_0)\\
\Im Z^s(\beta)& \Im Z^s(\alpha_0)\end{pmatrix}$$ for each $\beta\in K(B)$. 
Then by \cite[Proposition~1.2]{O}, $M_B(\alpha_0,\Tilde{\theta}^s)$ corepresents the moduli functor $\mathcal{M}_{\D(\PP^2)}(\alpha_0,\sigma^s)$.
Furthermore by (\ref{grp}), we have an isomorphism 
\begin{equation}\label{modu}
\mathcal{M}_{\D(\PP^2)}(-\alpha_0,\sigma_{(sH,tH)})\cong \mathcal{M}_{\D(\PP^2)}(\alpha_0,\sigma^s)\colon E\mapsto E[1]
\end{equation}
of moduli functors. 
We notice that for an object $E\in\A_{(sH,tH)}$, we have $[E]=-\alpha_0\in K(B)\cong K(\PP^2)$ if and only if $\ch(E)=(0,1,\frac{1}{2}-n)\in \Z\oplus\Z\oplus \frac{1}{2}\Z$.
We give the following lemmas to prove Proposition~\ref{p2} 
\begin{lemm}\label{lemp2}We assume $-1< s\le 0$ and put $t=\sqrt{1-s^2}$.
Then for any line $L\subset\PP^2$, the structure sheaf
$\mo_L(1-n)$ tensored by $\mo_{\PP^2}((1-n)H)$ is $\sigma_{(sH,tH)}$-stable.
\end{lemm}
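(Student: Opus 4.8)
The plan is to verify the two conditions of Bridgeland's criterion (Proposition~\ref{geom}) are not even needed here; instead I would argue directly that $\mo_L(1-n)$ lies in the heart $\A_{(sH,tH)}$ and that it admits no destabilizing subobject. First I would record the Chern character: $\ch(\mo_L(1-n)) = (0,1,\tfrac{1}{2}-n)$, so $\mo_L(1-n)$ is a pure torsion sheaf of dimension one, and as such $\mathcal H^0 = \mo_L(1-n)$ is torsion while $\mathcal H^{-1}=0$; hence by the definition of $\A_{(sH,tH)}$ it belongs to the heart for every $s,t$ with $t>0$. Then $Z_{(sH,tH)}(\mo_L(1-n)) = -\int_{\PP^2}\ch(\mo_L(1-n))\exp(-sH-\sqrt{-1}tH)$, which one computes to be a complex number with strictly positive imaginary part $t>0$ (coming from the $c_1=1$ term), so its phase is in $(0,1)$ as required.

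Next I would classify the possible subobjects of $\mo_L(1-n)$ in the heart $\A_{(sH,tH)}$. Since $\mo_L(1-n)$ is a sheaf, any subobject $F \hookrightarrow \mo_L(1-n)$ in $\A_{(sH,tH)}$ fits in a triangle whose cohomology sheaves force $\mathcal H^{-1}(F)=0$ and $\mathcal H^0(F)\subset \mo_L(1-n)$ a subsheaf (this is the standard argument: the long exact cohomology sequence of the triangle $F \to \mo_L(1-n) \to Q \to F[1]$ together with $\mathcal H^{-1}(Q)$ torsion-free and $\mathcal H^0(F), \mathcal H^0(Q)$ satisfying the slope conditions). A subsheaf of $\mo_L(1-n)$ is a torsion sheaf supported on $L$, so it is of the form $\mo_L(1-n)\otimes I$ for an ideal, i.e. $\mo_L(1-n-k)$ twisted by a length-zero modification, or a $0$-dimensional sheaf; concretely the nontrivial proper subobjects are $\mo_L(1-n)(-Z)$ for $Z$ an effective divisor on $L$ together with possible $0$-dimensional subsheaves, all of which have $\ch_0=0$, $\ch_1 = 0$ or $1$, $\ch_2 < \tfrac12 - n$. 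For these I would compare phases: a $0$-dimensional subsheaf has $Z$-value a negative real number, hence phase $1$, which is $> \phi(\mo_L(1-n))$, so it does not destabilize; and a rank-zero, $c_1=1$ proper subsheaf $\mo_L(1-n-k)$ with $k>0$ has $\Im Z = t$ equal to that of $\mo_L(1-n)$ but strictly larger $\Re Z$ in absolute value on the negative side, giving strictly smaller phase. Hence no proper subobject has phase $\ge \phi(\mo_L(1-n))$, so $\mo_L(1-n)$ is stable.

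The one subtlety to handle carefully, and what I expect to be the main obstacle, is ruling out subobjects $F \in \A_{(sH,tH)}$ with $\mathcal H^{-1}(F)\ne 0$: a priori a subobject of a sheaf in this tilted heart can be a genuine two-term complex. The point is that $Q = \mathrm{cone}(F\to\mo_L(1-n))$ also lies in $\A_{(sH,tH)}$, and the cohomology long exact sequence $0 \to \mathcal H^{-1}(F)\to \mathcal H^{-1}(\mo_L(1-n)) = 0$ forces $\mathcal H^{-1}(F)=0$ immediately — so in fact this case cannot occur, and the argument above is complete. The hypothesis $-1 < s \le 0$ (so $st \le 0$) is what makes the heart $\A_{(sH,tH)}$ the ``standard-looking'' one in which $\mo_L(1-n)$ and its subsheaves all sit in degree $0$; I would note explicitly where $st\le 0$ is used, namely in checking $\mu_{H-\min}$ of free parts, though here everything in sight is torsion so the condition is automatic. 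Finally I would remark that this stability, being an open condition, will be used in the companion lemmas to identify $M_+(\alpha_0)$ and $M_-(\alpha_0)$ with $\PP^2$ (the moduli of such line-supported sheaves), completing the proof of Proposition~\ref{p2}.
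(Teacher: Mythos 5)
There is a genuine gap, and it sits exactly where the paper's proof does all of its work. Your reduction of the problem to classifying subobjects rests on the claim that any subobject $F\hookrightarrow\mo_L(1-n)$ in $\A_{(sH,tH)}$ is a subsheaf of $\mo_L(1-n)$. That is false. From the triangle $F\to\mo_L(1-n)\to G$ the long exact sequence of cohomology sheaves reads $0\to\mathcal{H}^{-1}(G)\to F\to\mo_L(1-n)\to\mathcal{H}^0(G)\to0$; you correctly conclude $\mathcal{H}^{-1}(F)=0$, i.e.\ $F$ is a sheaf, but the map of sheaves $F\to\mo_L(1-n)$ has kernel $\mathcal{H}^{-1}(G)$, which is only constrained to be torsion free with $\mu_{H-\max}\le st$ and need not vanish. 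A monomorphism in the tilted heart is not a monomorphism of sheaves. Consequently $F$ may have positive rank: for instance, when $s<0$ the sequence $0\to\mo_{\PP^2}(-1)\to\mo_{\PP^2}\to\mo_L\to0$ rotates to an exact sequence $0\to\mo_{\PP^2}\to\mo_L\to\mo_{\PP^2}(-1)[1]\to0$ in $\A_{(sH,tH)}$, exhibiting a rank-one subobject of a pure torsion sheaf. Ruling out such positive-rank subobjects is the entire content of the paper's argument: it splits into cases according to whether $\mathcal{H}^0(G)$ has support of dimension $1$ or $0$, deduces $\rk(F)=\rk(\mathcal{H}^{-1}(G))$ and $c_1(F)-c_1(\mathcal{H}^{-1}(G))\in\{0,1\}$, and plays this off against $\mu_{H-\max}(\mathcal{H}^{-1}(G))\le st<\mu_{H-\min}(F_{\fr})$ to force $F$ torsion (whence $\mathcal{H}^{-1}(G)=0$ and purity finishes the first case) or to pin down $c_1(F)=1$, $c_1(\mathcal{H}^{-1}(G))=0$ and conclude via $\Im Z_{(sH,tH)}(G)=\rk(\mathcal{H}^{-1}(G))\,st$. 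This is also where the hypothesis $-1<s\le0$ is genuinely used (it forces $st\le 0$ so that $\Im Z_{(sH,tH)}(G)\ge0$ kills $\mathcal{H}^{-1}(G)$ when $s<0$) --- not, as you suggest, only in a vacuous check on torsion sheaves.

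A secondary error: your phase comparison for a hypothetical $0$-dimensional subobject is inverted. A proper \emph{subobject} with $Z$-value on the negative real axis has phase $1>\phi(\mo_L(1-n))$ and would therefore destabilize; it is for \emph{quotients} that phase $1$ is harmless, which is how the paper uses this observation ($\Im Z_{(sH,tH)}(G)=0$ gives $\phi(G)=1>\phi(\mo_L(1-n))$, so the quotient $G$ does not break stability). In your setting the case happens to be vacuous because $\mo_L(1-n)$ is pure and admits no $0$-dimensional subsheaves, but the logic as written would not survive transplantation to the cases you omitted. The parts of your proposal that are correct --- membership of $\mo_L(1-n)$ in the heart, and the phase comparison showing the genuine subsheaves $\mo_L(1-n-k)$ do not destabilize --- are the easy parts; the statement is not proved without handling the positive-rank subobjects.
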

\begin{proof}
We show that $\mo_L(1-n)\in\A_{(sH,tH)}$ is
$\sigma_{(sH,tH)}$-stable for any line $L\subset\PP^2$.
We take an exact sequence in $\A_{(sH,tH)}$
$$0\to F\to \mo_L(1-n)\to G\to 0.$$
Then we have a long exact sequence 
$$0\to \mathcal{H}^{-1}(G)\to F\to \mo_L(1-n)\to 
\mathcal{H}^0(G)\to 0.$$  
If the dimension of support of $\mathcal{H}^0(G)$
is equal to $1$, we have $\rk(F)=\rk(\mathcal{H}^{-1}(G))$,
$c_1(F)=c_1(\mathcal{H}^{-1}(G))$. If $\rk(F)\neq 0$, this contradicts the
fact that $F,G\in\A_{(sH,tH)}$ implies inequalities
$\mu_{tH}(\mathcal{H}^{-1}(G))\le st<\mu_{tH}(F)$. 
Hence $F$ is a torsion sheaf and $\mathcal{H}^{-1}(G)=0$.
This implies $F=0$ and $G=\mo_L(1-n)$ since $\mo_L(1-n)$
is a pure sheaf.

If the dimension of support of $\mathcal{H}^0(G)$
is equal to $0$, we have $\rk(F)=\rk(\mathcal{H}^{-1}(G))$,
$c_1(F)=c_1(\mathcal{H}^{-1}(G))-1$. 
Inequalities $\mu_H(\mathcal{H}^{-1}(G))\le st<\mu_H(F)$ implis 
$$c_1(F)=1, \text{ and }c_1(\mathcal{H}^{-1}(G))=0.$$ 
Hence we have
$\Im Z_{(sH,tH)}(G)=\rk(\mathcal{H}^{-1}(G))st.$ 
In the case where $s<0$ this implies $\mathcal{H}^{-1}(G)=0$ since
$\Im Z_{(sH,tH)}(G)\ge 0$. In any case, we have $\Im Z_{(sH,tH)}(G)=0$ and	 
$$\phi(G)=1>\phi(\mo_L(1-n)).$$ Hence $G$ does not 
break $\sigma_{(sH,tH)}$-stability of $\mo_L(1-n)$.
\end{proof}

\begin{lemm}
An object $E\in\A_{(0,H)}$ with $[E]=-\alpha_0$ 
is $\sigma_{(0,H)}$-semistable
if and only if $E\cong \mo_{L}(1-n)$ for a line $L$ on $\PP^2$. 
\end{lemm}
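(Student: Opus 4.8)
The plan is to characterize objects $E\in\A_{(0,H)}$ with $[E]=-\alpha_0$, i.e. $\ch(E)=(0,1,\tfrac12-n)$, that are $\sigma_{(0,H)}$-semistable. The "if" direction is already essentially done: by Lemma~\ref{lemp2} applied with $s=0$, $t=1$, each $\mo_L(1-n)$ is $\sigma_{(0,H)}$-stable, hence semistable, and a direct Chern-character computation shows $\ch(\mo_L(1-n))=(0,1,\tfrac12-n)$, so $[\mo_L(1-n)]=-\alpha_0$ in $K(B)\cong K(\PP^2)$ by the remark preceding the lemmas. So the content is the "only if" direction.

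For the "only if" direction I would first unwind what membership in $\A_{(0,H)}$ with $[E]=-\alpha_0$ forces. Writing $\mathcal H^{-1}(E)=A$ (torsion free, $\mu_{H-\max}(A)\le 0$) and $\mathcal H^0(E)=B$ (torsion, since a free part would have $\mu_{H-\min}>0$ forcing $c_1>0$, incompatible with $c_1(E)=1$ once $A$ contributes), one gets $\rk A = \rk\mathcal H^0$ in the sense that $\rk E = -\rk A = 0$, so $\rk A=0$, hence $A=0$ because $A$ is torsion free. Thus $E=B$ is an honest torsion sheaf with $\ch(E)=(0,1,\tfrac12-n)$; being a rank-zero sheaf with $c_1=1$ means its support is a line $L$ (with multiplicity one) plus possibly a zero-dimensional piece, i.e. there is a pure one-dimensional sheaf $E'$ supported on a line and a surjection... more precisely $E$ sits in $0\to E_0\to E\to E'\to 0$ or $0\to E'\to E\to E_0\to 0$ with $E_0$ of dimension $0$ and $E'$ pure one-dimensional on a line. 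Here $\phi(\mo_x)$-type considerations enter: a skyscraper $\mo_x\in\A_{(0,H)}$ has $Z_{(0,H)}(\mo_x)\in\R_{<0}$, phase $1$, which is the maximal phase; so any nonzero quotient or sub of maximal phase tests semistability. The pure part $E'$ on a line $L$ with $c_1=1$ is $\mo_L(k)$ for some $k$, and matching $\ch_2$ with the zero-dimensional correction pins down the possibilities.

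The key step, and the main obstacle, is ruling out the "impure" and "wrong twist" cases using $\sigma_{(0,H)}$-semistability. Concretely: (a) if $E$ has a zero-dimensional subsheaf $T\neq 0$, then $\phi(T)=1\ge\phi(E)$, but one must check $Z_{(0,H)}(E)$ itself has phase $<1$ to get a contradiction — this is a computation: $Z_{(0,H)}(E)=-\ch_2(E)+\sqrt{-1}\,(\text{something involving }c_1)$, and with $c_1=1$, $\Im Z_{(0,H)}(E)>0$ so $\phi(E)<1$, contradiction; hence $E$ has no zero-dimensional subsheaf, i.e. $E$ is pure one-dimensional, so $E\cong\mo_L(k)$ for a line $L$; (b) then matching $\ch_2(\mo_L(k))$ against $\tfrac12-n$ forces $k=1-n$. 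Part (a) is where care is needed, since one has to also handle the possibility that $E$ has a quotient that destabilizes from the other side and confirm that the category constraint ($\mathcal H^{-1}=0$) genuinely holds; the phase bookkeeping with the $\grp$-twist $g^s$ implicit in passing between $\sigma^s$ and $\sigma_{(sH,tH)}$ must be tracked so that "phase $1$" really is extremal. Once purity and the twist are established, $M_B(\alpha_0,\widetilde\theta^{\,0})$ parametrizes exactly the sheaves $\mo_L(1-n)$, i.e. lines $L\subset\PP^2$, giving $M_-(\alpha_0)\cong\PP^2$; and since $s=0$ lies in (or on the closure between) the relevant chambers, the analogous computation at a slightly positive $s$ — using the full strength of Lemma~\ref{lemp2} which is stated for $-1<s\le 0$, together with a symmetric argument for small $s>0$ — yields $M_+(\alpha_0)\cong\PP^2$ as well, completing Proposition~\ref{p2}.
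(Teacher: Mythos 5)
Your ``if'' direction and the second half of your ``only if'' direction (ruling out zero-dimensional subsheaves because they have phase $1>\phi(E)$, then reading off the twist $k=1-n$ from $\ch_2$) are correct and agree with the paper's proof. The genuine gap is in the first step of the ``only if'' direction, where you try to deduce $\mathcal{H}^{-1}(E)=0$ from membership in the heart $\A_{(0,H)}$ and the class $[E]=-\alpha_0$ alone, without invoking semistability. Your parenthetical claim that $\mathcal{H}^{0}(E)$ must be torsion ``since a free part would have $\mu_{H-\min}>0$ forcing $c_1>0$, incompatible with $c_1(E)=1$'' does not hold: the heart conditions only give $c_1(\mathcal{H}^0(E))\ge 1$ and $c_1(\mathcal{H}^{-1}(E))\le 0$, which is perfectly compatible with $c_1(E)=1$. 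Concretely, for $W\subset\PP^2$ a length-$n$ zero-dimensional subscheme, the object
$$E=\mo_{\PP^2}[1]\oplus \mathcal{I}_W(1)$$
lies in $\A_{(0,H)}$ (since $\mu_{H-\max}(\mo_{\PP^2})=0\le 0$ and $\mu_{H-\min}(\mathcal{I}_W(1))=1>0$) and has $\ch(E)=(0,1,\tfrac12-n)$, i.e.\ $[E]=-\alpha_0$, yet $\mathcal{H}^{-1}(E)=\mo_{\PP^2}\neq 0$ and $\mathcal{H}^{0}(E)$ is not torsion. So the statement you want is simply false at the level of the heart; it holds only for semistable objects, and your argument never uses semistability at this step.

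The repair is the argument the paper actually gives: if $F:=\mathcal{H}^{-1}(E)[1]\neq 0$, it is a subobject of $E$ in $\A_{(0,H)}$ with $\Im Z_{(0,H)}(F)=-c_1(\mathcal{H}^{-1}(E))\ge 0$, and the bound $\Im Z_{(0,H)}(F)<\Im Z_{(0,H)}(E)=1$ together with integrality forces $c_1(\mathcal{H}^{-1}(E))=0$; then $Z_{(0,H)}(F)$ lies in $\R_{<0}$ (as it must for a nonzero object of the heart with vanishing imaginary part), so $\phi(F)=1>\phi(E)$ and $F$ destabilizes $E$. In the example above the destabilizing subobject is exactly $\mo_{\PP^2}[1]$. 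Once this semistability input is inserted to conclude that $E$ is a sheaf, the remainder of your proof goes through as written.
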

\begin{proof}
We assume that $E\in\A_{(0,H)}$ is $\sigma_{(0,H)}$-semistable and 
$\mathcal{H}^{-1}(E)\neq 0$. We put $F:=\mathcal{H}^{-1}(E)[1]$ 
and $G:=\mathcal{H}^0(E)$. Then an exact sequence in $\A_{(0,H)}$
$$0\to F\to E\to G\to0$$ 
implies that $0\le\Im Z_{(0,H)}(F)<\Im Z_{(0,H)}(E)=t\le 1$.
If we put $\ch(F)=-(r,c_1,\ch_2)$ with $r>0$.
Then $\Im Z_{(0,H)}(F)=-c_1=0$ hence 
we see that $\phi(E)<\phi(F)=1$ contradicting to  
$\sigma_{(0,H)}$-semistability of $E$. 
Thus $E$ is a sheaf with $\ch(E)=(0,1,\frac{1}{2}-n)$.
Any subsheaf with support dimension $1$ of $E$ break $\sigma_{(0,H)}$-semistability
of $E$. Hence we see that $E$ is a pure sheaf.
This shows that $E\cong\mo_L(1-n)$ for a line $L$ on $\PP^2$.

Conversely by Lemma~\ref{lemp2},
we see that $\mo_L(1-n)\in\A_{(0,H)}$ is
$\sigma_{(0,H)}$-semistable for any line $L\subset\PP^2$.
\end{proof}

By this lemma and the isomorphism (\ref{modu}), the moduli functor $\M_{\D(\PP^2)}(\alpha_0,\sigma^0)$ is represented by $\PP^2\cong \{\mo_L(1-n)\mid L\subset\PP^2\colon \text{ line }\}$.
By \cite[Proposition~4.4]{O}, $\M_{\D(\PP^2)}(\alpha_0,\sigma^0)$ is also represented by $M_B(\alpha_0,\Tilde{\theta}^0)$.
Hence we have an isomorphism $M_B(\alpha_0,\Tilde{\theta}^0)\cong \PP^2$. 
If we put $s_0:=-\frac{1}{2n-1}$, then we have $\Tilde{\theta}^{s_0+\e}\in C_-$, $\Tilde{\theta}^{s_0-\e}\in C_+$ and  $\Tilde{\theta}^{s_0}\in W_0$ for $\e>0$ small enough.
By Lemma~\ref{lemp2} every object in $M_B(\alpha_0,\Tilde{\theta}^0)$ is $\Tilde{\theta}^s$-stable for $-1<s\le 0$. 
Hence $W_0$ is not a wall and $C_\pm$ and $W_0$ are contained in a single chamber. 
As a consequence we have isomorphisms
$$M_+(\alpha_0)\cong M_-(\alpha_0)\cong M_B(\alpha_0,\Tilde{\theta}^0)\cong\PP^2.$$ 
This completes the proof of Proposition~\ref{p2}.\\

\noindent\emph{
\bf{Acknowledgement}}\\
The author is grateful to his adviser Takao Fujita for many valuable comments and encouragement.
He thanks Hiraku Nakajima and K\={o}ta Yoshioka for valuable comments and motivating him to write this paper.

\bibliographystyle{plain}
\bibliography{b}

\begin{thebibliography}{ACGH84}
\bibitem[ABL]{ABL}
D. Arcara, A. Bertram and M. Lieblich. 
\newblock Bridgeland-Stable Moduli Spaces for K-Trivial Surfaces.
\newblock math.AG/0708.2247.

\bibitem[ACGH84]{ACGH}
E.~Arbarello, M.~Cornalba, P.~Griffiths, and J.~Harris.
\newblock {\em Geometry of Algebraic curves}, Vol.~I.
\newblock Springer, 1984.

\bibitem[BCHM10]{BCHM}
C.~Birkar, P.~Cascini, C.~Hacon and J.~McKernan.
\newblock Existence of minimal models for varieties of log general type.
\newblock {\em J. Amer. Math. Soc.}, Vol.~23, pp. 405-468, 2010. 

\bibitem[Bon89]{Bo}
A.~I. Bondal.
\newblock Representations of associative algebras and coherent sheaves.
\newblock {\em Izv. Akad. Nauk SSSR Ser. Mat.}, Vol.~53, pp. 25--44, 1989.
\newblock English transl. in Math. USSR-Izv. 34 (1990), no. 1, 23-44.

\bibitem[Bri07]{B2}
T.~Bridgeland.
\newblock Stability conditions on triangulated categories.
\newblock {\em Ann. of Math.}, Vol.~166, No. (2), pp. 317--345, 2007.

\bibitem[DK87]{DK}
V.~I. Danilov and A.~G. Khovanskii.
\newblock Newton polyhedra and an algorithm for calculating Hodge-Deligne
  numbers.
\newblock {\em Math. USSR Izvestiya}, Vol.~29, pp. 279--298, 1987.

\bibitem[Dre98]{D}
J.~M. Drezet.
\newblock Group de Picard des vari\'{e}t\'{e}s de modules de faisceaux semi-stables sur
  $\mathbb{P}_2(\mathbb{C})$.
\newblock {\em Ann. de l'Institut Fourier.}, Vol.~38, pp. 105--168, 1998.

\bibitem[ES93]{ES}
G.~Ellingsrud and S.~A. Str{\o}mme.
\newblock Towards the Chow ring of the Hilbert scheme on $\mathbb{P}^2$.
\newblock {\em J. Reine Angew. Math.}, Vol. 441, pp. 33--44, 1993.

\bibitem[HL97]{HL}
D.~Huybrechts and M.~Lehn.
\newblock {\em Geometry of moduli spaces of sheaves}.
\newblock No. E31 in Aspects in Mathematics. Vieweg, 1997.

\bibitem[Li97]{L}
J.~Li.
\newblock {\em Compactification of moduli of vector bundles over algebraic
  surfaces}.
\newblock World. Sci. Publishing, River Edge, NJ, 1997.

\bibitem[Mar88]{M1}
M.~Maruyama.
\newblock {\em On a compactification of a moduli space of stable vector bundles
  on a rational surface}.
\newblock Kinokuniya, Tokyo, 1988.

\bibitem[NY]{NY}
H.~Nakajima and K.~Yoshioka.
\newblock Perverse coherent sheaves on blow-up. II. wall-crossing and Betti
  numbers formula.
\newblock arXiv:0806.0463.

\bibitem[Ohk]{O}
R.~Ohkawa.
\newblock Moduli of Bridgeland semistable objects on $\mathbb{P}^2$.
\newblock To appear in Kodai Mathematical Journal.

\bibitem[Pot94]{P}
Le~Potier.
\newblock \`{A} propos de la construction de l'espace de modules des faisceaux
  semi-stables sur le plan projectif.
\newblock {\em Bull. Soc. Math. France}, Vol. 122, pp. 363--369, 1994.

\bibitem[Pot97]{P1}
Le~Potier.
\newblock {\em Lectures on vector bundles}.
\newblock No.~54 in Cambridge Stud. Adv. Math. Cambridge University Press,
  Cambridge, 1997.

\bibitem[Yos94]{Y1}
K.~Yoshioka.
\newblock The Betti numbers of the moduli space of stable sheaves of rank $2$
  on $\mathbb{P}^2$.
\newblock {\em J. Reine Angew, Math.}, Vol. 453, pp. 193--220, 1994.

\bibitem[Yos03]{Y2}
K.~Yoshioka.
\newblock A note on moduli of vector bundles on rational surfaces.
\newblock {\em J. Math. Kyoto Univ.}, Vol.~43, pp. 139--163, 2003.

\end{thebibliography}

\end{document}